\newcolumntype{C}[1]{>{\centering\arraybackslash$}p{#1}<{$}}
\numberwithin{equation}{section}
\crefname{conjecture}{conjecture}{conjectures}
\Crefname{conjecture}{Conjecture}{Conjectures}
\crefname{observation}{observation}{observations}
\Crefname{observation}{Observation}{Observations}
\crefname{hope}{hope}{hopes}
\Crefname{hope}{Hope}{Hopes}
\newtheorem{theorem}{Theorem}[section]
\newtheorem{proposition}[theorem]{Proposition}
\newtheorem{lemma}[theorem]{Lemma}
\newtheorem{corollary}[theorem]{Corollary}
\newtheorem{conjecture}[theorem]{Conjecture}
\theoremstyle{definition}
\newtheorem{example}[theorem]{Example}
\newtheorem{remark}[theorem]{Remark}
\newcommand{\ssm}{\smallsetminus} 
\DeclareMathOperator{\conv}{conv} 
\DeclareMathOperator{\mon}{m}
\renewcommand{\th}{\textsuperscript{th}}
\newcommand{\ZZ}{\mathbb{Z}}
\newcommand{\RR}{\mathbb{R}}
\newcommand{\Sym}{{\mathfrak{S}}}
\newcommand{\Phiplus}{\Phi^+}
\newcommand{\Phipm}{\Phi_{\geq -1}}
\newcommand{\ie}{\textit{i.e.}} 
\definecolor{darkblue}{rgb}{0,0,0.7} 
\newcommand{\darkblue}{\color{darkblue}} 
\definecolor{lightgrey}{rgb}{0.9,0.9,0.9} 
\newcommand{\lightgrey}{\cellcolor{lightgrey}} 
\definecolor{grey}{rgb}{0.5,0.5,0.5} 
\newcommand{\Dfn}[1]{\emph{\darkblue #1}} 
\newcommand{\x}{\mathbf{x}}
\newcommand{\y}{\mathbf{y}}
\renewcommand{\u}{\mathbf{u}}
\newcommand{\f}{\mathbf{f}}
\newcommand{\onevar}{\mathbf{1}}
\newcommand{\id}{{1\!\!1}} 
\newcommand{\Mpr}{\sfM^{pr}}
\newcommand{\Mex}{\sfM^{ex}}
\newcommand{\Alg}{\mathcal{A}}
\newcommand{\subwordComplex}{\mathcal{SC}} 
\newcommand{\clusterComplex}{\subwordComplex\big(\cw{c}\big)}
\newcommand{\greedy}{I_{\operatorname{g}}} 
\newcommand{\antigreedy}{I_{\operatorname{ag}}} 
\newcommand{\Fpoly}[2]{{{\sf F}_{#1}(#2)}}
\newcommand{\FpolyGen}[1]{{{\sf F}(#1)}}
\newcommand{\Xvec}[2]{{{\sf #1}({#2})}}
\newcommand{\gvec}[1]{\Xvec{g}{#1}}
\newcommand{\dvec}[1]{\Xvec{d}{#1}}
\newcommand{\cvec}[1]{\Xvec{c}{#1}}
\newcommand{\zetag}{{\zeta_{\operatorname{g}}}}
\newcommand{\zetaag}{{\zeta_{\operatorname{ag}}}}
\newcommand{\sq}[1]{{\rm #1}} 
\newcommand{\Q}{\sq{Q}} 
\newcommand{\q}{\sq{q}} 
\newcommand{\s}{\sq{s}} 
\newcommand{\sref}{\mathcal{S}}
\newcommand{\sfr}{{\sf r}} 
\newcommand{\sfR}{{\sf R}} 
\newcommand{\sfw}{{\sf w}} 
\newcommand{\sfW}{{\sf W}} 
\newcommand{\sfB}{{\sf B}} 
\newcommand{\wo}{{w_\circ}}
\renewcommand{\c}{\sq{c}}
\newcommand{\sw}[2]{\sq{#1}(\sq{#2})} 
\newcommand{\cwo}[1]{\sw{\wo}{#1}} 
\newcommand{\cw}[1]{\sq{#1}\cwo{#1}} 
\newcommand{\brickVector}{\sfB} 
\newcommand{\brickPolytope}{\mathcal{B}} 
\newcommand{\Root}[2]{{\sfr}(#1,#2)} 
\newcommand{\Roots}[1]{{\sfR}(#1)} 
\newcommand{\Weight}[2]{{\sfw}(#1,#2)} 
\newcommand{\Weights}[1]{{\sfW}(#1)} 
\newcommand{\IntervalWeight}[1]{{\operatorname{Int}\big(\Weight{\greedy}{#1},\ \Weight{\antigreedy}{#1}\big)}}
\newcommand{\IntervalRoot}[1]{{\operatorname{Int}\big(0,\ \Root{\greedy}{#1}\big)}}
\newcommand{\coRoot}[2]{{\sfr^\vee}(#1,#2)} 
\newcommand{\coRoots}[1]{{\sfR^\vee}(#1)} 
\newcommand{\coWeight}[2]{{\sfw^\vee}(#1,#2)} 
\newcommand{\coWeights}[1]{{\sfW^\vee}(#1)} 
\newcommand{\Newton}[1]{{\operatorname{Newton}\big( #1 \big)}}
\newcommand{\set}[2]{\left\{ #1 \;:\; #2 \right\}} 
\newcommand{\bigset}[2]{\big\{ #1 \;:\; #2 \big\}} 
\newcommand{\bigmultiset}[2]{\big\{\!\!\big\{ #1 \;:\; #2 \big\}\!\!\big\}} 
\newcommand{\Mink}{{\sum}}
\newcommand{\dotprod}[2]{\left\langle \, #1 \,\middle|\, #2 \, \right\rangle} 
\newcommand{\bigdotprod}[2]{\big\langle \, #1 \,\big|\, #2 \, \big\rangle} 
\newcommand{\wordprod}[2]{\Pi{#1}_{#2}} 
\newcommand{\sfM}{{\sf M}} 
\newcommand{\mutmatrix}{\widetilde\sfM} 
\newcounter{intege}
\newcommand{\ngon}[9]{ 

  \foreach \t in {1,...,#1} {
    \coordinate (#7\t) at ($#2+(#8-\t*360/#1:#3)$);
  }
  \foreach \x/\y/\z in {#4}{
    \draw[\z,shorten <=#5pt, shorten >=#5pt] {(#7\x)--(#7\y)};
  }
  \foreach \x/\y/\z/\o in {#9}{
    \coordinate (#7_tmp)  at ($0.33*(first\x) + 0.33*(first\y) + 0.33*(first\z)$);
    \coordinate (#7_tmp1) at ($0.2*(#7_tmp) + 0.8*(first\x)$);
    \coordinate (#7_tmp2) at ($0.2*(#7_tmp) + 0.8*(first\y)$);
    \coordinate (#7_tmp3) at ($0.2*(#7_tmp) + 0.8*(first\z)$);
    \fill[fill opacity=\o] (#7_tmp1) -- (#7_tmp2) -- (#7_tmp3);
  }
  \setcounter{intege}{1}
  \pgfmathsetcounter{intege}{1}
  \foreach \object in {#6}{
    \node[inner sep=0pt] at (#7\theintege) {\object};
    \pgfmathsetcounter{intege}{\theintege+1}
    \setcounter{intege}{\theintege}
  }
}
\title[Finite type cluster algebras from subword complexes]{Towards a uniform subword complex description of acyclic finite type cluster algebras}
\author[S.~B.~Brodsky]{Sarah~B.~Brodsky$^*$}
\address[S.~B.~Brodsky]{Department of Mathematics, Technische Universit\"{a}t Berlin, Germany}
\email{brodsky@math.tu-berlin.de}
\thanks{$^*$ Supported by the European Research Council grant SHPEF awarded to
Olga Holtz and the Berlin Mathematical School}
\author[C.~Stump]{Christian Stump$^{\dagger}$}
\address[C.~Stump]{Institut f\"ur Mathematik, Freie Universit\"at Berlin, Germany}
\email{christian.stump@fu-berlin.de}
\thanks{$^{\dagger}$Supported by the German Research Foundation DFG, grants STU 563/2-2 ``Coxeter-Catalan combinatorics" and ``Combinatorial and geometric structures for reflection groups and groupoids'' within SPP 1489.}
\thanks{\textbf{Acknowledgements:} The second author thanks Vincent Pilaud, Nathan Reading, and Hugh Thomas for various discussions about finite type cluster algebras and their combinatorics.}
\date{\today}
\begin{document}

\begin{abstract}
  It has been established in recent years how to approach acyclic cluster algebras of finite type using subword complexes.
  In this paper, we continue this study by describing the $c$- and $g$-vectors, and by providing a conjectured description of the Newton polytopes of the $F$-polynomials.
  In particular, we show that this conjectured description would imply that finite type cluster complexes are realized by the duals of the Minkowski sums of the Newton polytopes of either the $F$-polynomials, or of the cluster variables, respectively.
\end{abstract}

\maketitle

\setcounter{tocdepth}{1}
\tableofcontents

\section{Introduction}

Let $(W,\sref)$ be a finite crystallographic Coxeter system of rank~$n$ with simple system~$\sref$, and let~$c \in W$ be a (standard) Coxeter element for $(W,\sref)$; \ie~$c = s_1\cdots s_n$ is the product of all elements in~$\sref$ in some order.
Let $A = (a_{st})_{s,t \in \sref}$ be a crystallographic Cartan matrix for $(W,\sref)$; \ie~an integral matrix such that $a_{ss} = 2$, $a_{st} \le 0$, $a_{st}a_{ts} = 4 \cos^2(\frac{\pi}{m_{st}})$ and ${a_{st} = 0 \Leftrightarrow a_{ts} = 0}$ for all~$s \ne t \in \sref$ where $m_{st}$ is the order of $st$ in~$W$, and let $\Delta \subseteq \Phiplus \subseteq \Phipm \subseteq \Phi$ be the resulting root system with simple roots $\Delta = \set{\alpha_s}{s \in \sref}$, positive roots~$\Phiplus = \Phi\cap\ZZ_{\geq 0}\Delta$, almost positive roots~$\Phipm = \Phiplus \sqcup -\Delta$, and root system $\Phi=\set{w(\alpha)}{w \in W, \alpha\in\Delta}$.
For convenience, we set~$\alpha_i := \alpha_{s_i}$.
As for $s \neq t$, we have that $a_{st} = 0$ if and only if $st = ts$, we think of the Coxeter element~$c$ as an acyclic orientation of the Dynkin diagram by orienting an edge $s \rightarrow t$ if~$s$ comes before~$t$ in the given reduced word~$\c = \s_1 \s_2 \cdots \s_n$ (or, equivalently, in all reduced words for~$c$).
It is indeed the case that this mapping yields a one-to-one correspondence between Coxeter elements and acyclic orientations of the Dynkin diagram, and that the reduced words for a Coxeter element are given by all linear extensions of this orientation.
The reason we work with the particular word $\c = \s_1\cdots\s_n$ is that it will later simplify several notations, both in the indexing of variables (see the following paragraph), and in the definition of subword complexes (see the beginning of \Cref{sec:results}).

It is well established how to associate to this data an initial seed of a cluster algebra of finite type with principal coefficients.
We refer to \Cref{sec:proofextendedpart} and also to~\cite{FZ2007,FZ20022,FZ2003} for the needed background on cluster algebras.
For a given such orientation of the Dynkin diagram, define the skew-symmetrizable matrix $\sfM_c = (b_{st})_{s,t \in \sref}$ by
\begin{align}
  b_{st} = \begin{cases}
             -a_{st} & \text{ if } s \rightarrow t,\\
             a_{st}  & \text{ if } s \leftarrow t,\\
             0       & \text{ else}.
           \end{cases} \label{eq:bmatrix}
\end{align}
The initial cluster seed is then given by $\big(\mutmatrix_c, \x, \y \big)$, where $\mutmatrix_c$ is the $(2n \times n)$-matrix, known as the exchange matrix, $\begin{bmatrix} \sfM_c \\ \id_n \end{bmatrix}$ with principal part~$\sfM_c$ and extended part~$\id_n$ an identity matrix, $\x = (x_1,\ldots,x_n)$ are the cluster variables (the cluster of the seed), and $\y = (y_1,\ldots,y_n)$ are the frozen variables (the coefficients of the seed).
In the present context, one should think of the variables~$x_k$ and~$y_k$ as being indexed by~$\alpha_k$ for $1 \leq k \leq n$, so they are in particular indexed in a way that is consistent with the order of the simple reflections in the given Coxeter element~$c$.
Let $\Alg(W,c) := \Alg(\mutmatrix_c)$ be the cluster algebra generated from this initial seed.

\medskip

It is known that every cluster variable~$u(\x,\y) \in \Alg(W,c)$ lives inside the ring $\ZZ[x_1^{\pm 1},\ldots,x_n^{\pm 1};y_1,\ldots,y_n]$, \ie, $u(\x,\y) = p(\x,\y)/\mon(\x)$ where~$p(\x,\y)$ is a polynomial in~$\x,\y$ with integer coefficients and $\mon(\x)$ in a monomial in~$\x$, see~\cite[Proposition~3.6]{FZ2007}.
The \Dfn{$d$-vector} $\dvec{u}$ of~$u(\x,\y)$ is the exponent vector of the denominator monomial $\mon(\x)$, \ie, $\dvec{u} = (d_1,\ldots,d_n)$ for $\mon(\x) = x_1^{d_1}\cdots x_n^{d_n}$ and should be thought of as a vector in the basis $\Delta$, \ie, $\dvec{u} = d_1\alpha_{1} + \ldots + d_n\alpha_{n}$.
Under this identification, it is shown in~\cite[Theorem~1.9]{FZ2003}, that the map $u \mapsto \dvec{u}$ is a bijection between cluster variables in $\Alg(W,c)$ and almost positive roots~$\Phipm$, and that furthermore, $\dvec{u} = -\alpha_i \in -\Delta$ if and only if $u(\x,\y) = x_i$.
We will regularly use this bijection in indexing objects.
For example, we denote by $\Fpoly{u}{\y} = \Fpoly{\beta}{\y} = u(\onevar,\y) = p(\onevar,\y)$ the \Dfn{$F$-polynomial} associated to~$u(\x,\y) \in \Alg(W,c)$ and to $\beta \in \Phipm$ with $\dvec{u} = \beta$.
As $\Fpoly{\beta}{\y} = 1$ for $\beta \in -\Delta$, one usually considers $F$-polynomials only for positive roots $\beta \in \Phiplus$.
We also denote by $\gvec{u} = \gvec{\beta} = (g_1,\ldots,g_n)$ the \Dfn{$g$-vector} given by the exponent vector of~$u(\x,0)$.
For reasons that will become clear later, we consider the $g$-vector to live inside the weight space, \ie, $\gvec{u} = g_1\omega_1+\cdots+g_n\omega_n$.

\medskip

For the course of this paper, it will be natural to consider the vector notation in the exchange matrices inside the root space:
we think of any exchange matrix $\mutmatrix = \begin{bmatrix} \Mpr \\ \Mex \end{bmatrix}$ of a cluster seed of $\Alg(W,c)$ with cluster $(u_1,\ldots,u_n)$ as being indexed as follows.
Row and column~$i$ of~$\Mpr$ are both indexed by the almost positive root $\dvec{u_i}$.
Equally, column~$i$ of~$\Mex$ is indexed by this almost positive root, while row~$i$ of $\Mex$ is indexed by the simple root~$\alpha_i$.
The \Dfn{$c$-vector} $\cvec{u} = \cvec{\beta}$ with $\beta = \dvec{u}$ \emph{inside this cluster seed} is then given by the column vector of~$\Mex$ in the column indexed by the almost positive root~$\beta$, written as a linear combination of the simple roots,
\[
  \cvec{u} = \cvec{\beta} = [\Mex]_{\alpha_1,\beta} \alpha_1 + \ldots + [\Mex]_{\alpha_n,\beta} \alpha_n,
\]
where we emphasize that this definition does not only depend on the variable $u(x,y)$ but on the actual cluster seed.

\medskip

Every cluster seed is uniquely determined by its cluster, and the \Dfn{cluster complex} of $\Alg(W,c)$ is the simplicial complex with ground set being the set of cluster variables, and with facets being the clusters.
Cluster complexes of finite type with the initial seed coming from a bipartite Coxeter element  (\ie, those where every vertex in the corresponding orientation of the Dynkin diagram is either a sink or a source) were studied and completely described in terms of compatibility of $d$-vectors in~\cite{FZ2003}.
Polytopal realizations of the cluster complex of type $\Alg(W,c)$ were first obtained by F.~Chapoton, S.~Fomin, and A.~Zelevinsky in~\cite{CFZ2002} for bipartite Coxeter elements, and by C.~Hohlweg, C.~Lange, and H.~Thomas in~\cite{HLT2011} for general Coxeter elements.

\medskip

Despite the nice combinatorial descriptions of the cluster complex and its polytopal realization in terms of the corresponding root system given by sending a cluster variable to its denominator vector, to the best of our knowledge there has not been any successful attempt to describe the numerator of the cluster variables from that perspective.
In particular, no explicit construction of the cluster variables for general finite type cluster algebras is known that does not use the defining iterative procedure (which we recall in 
\Cref{sec:proofextendedpart}).

\smallskip
\begin{center}
  \emph{The main aim of this paper is to start the program of describing cluster variables\\
  in finite type cluster algebras in terms of combinatorial data from root systems.}
\end{center}
\smallskip

Towards such a description, we follow the recently introduced subword complex approach to finite type cluster algebras.
These subword complexes were originally considered by A.~Knutson and E.~Miller in the context of Gr\"obner geometry of Schubert varieties in~\cite{KM2005,KM2004}.
Their appearance in the context of finite type cluster algebras was established by C.~Ceballos, J.-P.~Labb\'e, V.~Pilaud and the second author in various collaborations.
In particular, they establish
\begin{itemize}
  \item a description of the cluster complex of the cluster algebra $\Alg(W,c)$ in~\cite[Theorem~2.2]{CLS2014},
  \item a vertex and facet description of its polytopal realization in~\cite[Theorem~6.4]{PS2015},
  \item a proof that the barycenter of this realization equals the barycenter of the corresponding permutahedron in~\cite[Theorem~1.1]{PS20152},
  \item an explicit description of the principal parts of the exchange matrices of the clusters~\cite[Theorem~6.20]{PS2015}, and
  \item an explicit description of the $d$-vectors with respect to any initial seed (including cyclic seeds) in~\cite[Corollary~3.4]{CP2015}.
\end{itemize}

In the present paper, we extend this viewpoint by providing the following two constructions in terms of subword complexes:
\begin{enumerate}
  \item We show in \Cref{thm:extendedpart} that the $c$-vectors of a cluster seed of the cluster algebra $\Alg(W,c)$ are given by the root configuration defined in~\eqref{eq:rootconfiguration}, and deduce in \Cref{cor:extendedpart} that the $g$-vectors are given by the weight configuration defined in~\eqref{eq:weightconfiguration}.

  \item We start the development of describing the $F$-polynomials for $\Alg(W,c)$ in \Cref{conj:newtonpolytope,conj:latticepoints} by conjecturally providing all their monomials and in particular their Newton polytopes.
  Both are then proven in type~$A_n$ and and small ranks including all exceptional types in \Cref{thm:main1,thm:main2}.
  A combinatorial description of the coefficients of the monomials of the $F$-polynomials would therefore be the last step to provide a complete combinatorial description of the cluster algebra as it is well known how to recover the cluster variables from the the $g$-vectors and the $F$-polynomials as we recall in \Cref{prop:clustervariables}.
\end{enumerate}

Two further remarks about previous work are in order.

\begin{remark}
  As we will later use, R.~Schiffler gave in~\cite{S2008} an explicit description of the cluster variables of type~$A_n$ via \emph{$T$-paths} on triangulations of the regular $(n+3)$-gon, and G.~Musiker and R.~Schiffler generalized that description in~\cite{MS2010} to cluster variables for cluster algebras associated to \emph{unpunctured surfaces} with arbitrary coefficients.
  Together with L.~Williams, they extended in~\cite{MSW2011} the results also to arbitrary surfaces, allowing punctures.
  We refer to \Cref{sec:proofmain1} for further details.
  We finally note that their combinatorial model for punctured surfaces can also be used to combinatorially describe the $F$-polynomials for type~$D$ cluster complexes.
\end{remark}

\begin{remark}
  N.~Reading and D.~Speyer provide in~\cite{RS2016} a general combinatorial framework for acyclic cluster algebras to obtain information about exchange matrices, and $c$- and $g$-vectors.
  See \Cref{rem:RS1,rem:RS2} for further details.
  As we will discuss in those remarks, both approaches are closely related.
  The two main differences currently are that our approach has not been extended beyond finite types (see also~\cite{RS2015}), while their approach only uses (their versions of) the root and the coroot configurations, but does not (seem to) provide information about $F$-polynomials and cluster variables.
\end{remark}

We finish the introduction by providing the following well understood running example of type~$A_2$ with principal coefficients.
We will use this later to visualize the close relationship with the combinatorics of subword complexes.
\textbf{This example has to be treated with caution} as it does not show several difficulties that appear in types other than~$A_n$, as here we have that Newton polytopes of $F$-polynomials have no inner lattice points, and all their monomials appear with coefficient~$1$.

\begin{example}\label{ex:clusterA2}
  The five cluster seeds and the dual cluster complex are given by
  \begin{center}
    \begin{tikzpicture}[scale=0.5]
      \coordinate (allo) at (0,0);
      \ngon{5}{(allo)}{5}
        {1/2/black,2/3/black,3/4/black,4/5/black,5/1/black}{25}{,,,,}{first}{162}{}

        \matrix [nodes={rectangle,text centered,inner sep=2pt}, matrix of nodes, font=\scriptsize] at (first1)
          { 0 & 1 \\ -1 & 0 \\ 1 & 0 \\ 0 & 1 \\ };
        \matrix [nodes={rectangle,text centered,inner sep=2pt}, matrix of math nodes] at ($(first1)+(2.2,0)$)
        { \big\{x_1,x_2\big\} \\ \big\{y_1,y_2\big\} \\ };

        \matrix [nodes={rectangle,text centered,inner sep=2pt}, matrix of nodes, font=\scriptsize] at (first2)
          { 0 & -1 \\ 1 & 0 \\ -1 & 1 \\ 0 & 1 \\ };
        \matrix [nodes={rectangle,text centered,inner sep=2pt}, matrix of math nodes] at ($(first2)+(2.9,0)$)
          {\big\{\frac{x_{2} + y_{1}}{x_{1}}, x_{2}\big\} \\ \big\{\frac{1}{y_{1}}, y_{1} y_{2}\big\} \\ };

        \matrix [nodes={rectangle,text centered,inner sep=2pt}, matrix of nodes, font=\scriptsize] at ($(first5)-(4,0)$)
          { 0 & -1 \\ 1 & 0 \\ 1 & 0 \\ 0 & -1 \\ };
        \matrix [nodes={rectangle,text centered,inner sep=2pt}, matrix of math nodes] at ($(first5)-(1,0)$)
          {\big\{x_{1}, \frac{x_{1} y_{2} + 1}{x_{2}}\big\} \\ \big\{y_{1}, \frac{1}{y_{2}}\big\} \\ };

        \matrix [nodes={rectangle,text centered,inner sep=2pt}, matrix of nodes, font=\scriptsize] at ($(first4)-(6.5,0)$)
          { 0 & 1 \\ -1 & 0 \\ -1 & 0 \\ 0 & -1 \\ };
        \matrix [nodes={rectangle,text centered,inner sep=2pt}, matrix of math nodes] at ($(first4)-(2,0)$)
          {\big\{\frac{x_{1} y_{1} y_{2} + x_{2} + y_{1}}{x_{1} x_{2}}, \frac{x_{1} y_{2} + 1}{x_{2}}\big\} \\ \big\{\frac{1}{y_{1}}, \frac{1}{y_{2}}\big\} \\ };

        \matrix [nodes={rectangle,text centered,inner sep=2pt}, matrix of nodes, font=\scriptsize] at ($(first3)-(0,0)$)
          { 0 & -1 \\ 1 & 0 \\ -1 & 0 \\ -1 & 1 \\ };
        \matrix [nodes={rectangle,text centered,inner sep=2pt}, matrix of math nodes] at ($(first3)+(4.5,0)$)
          {\big\{\frac{x_{1} y_{1} y_{2} + x_{2} + y_{1}}{x_{1} x_{2}}, \frac{x_{2} + y_{1}}{x_{1}}\big\} \\ \big\{\frac{1}{y_{1} y_{2}}, y_{2}\big\} \\ };
    \end{tikzpicture}
  \end{center}

  Observe that between the two clusters $\big\{\frac{x_{2} + y_{1}}{x_{1}}, x_{2}\big\}$ and $\big\{\frac{x_{1} y_{1} y_{2} + x_{2} + y_{1}}{x_{1} x_{2}}, \frac{x_{2} + y_{1}}{x_{1}}\big\}$ we switched the position of the common variable in the sense that the two columns and the first two rows of the mutation matrices switched.
  This is unavoidable and has to be done within this 5-cycle. As mentioned, we prefer to think of the columns and rows being indexed by almost positive roots and simple roots rather than such a linear listing.
  The cluster variables, their $d$- and $g$-vectors, and $F$-polynomials are thus given by
  \[
    \begin{array}{|c|c|c|c|}
      \hline
      &&&\\[-9pt]
      u(\x,\y) & \dvec{u}\in \Phipm & \gvec{u} & \Fpoly{u}{\y} \\[2pt]
      \hline
      &&&\\[-9pt]
      x_1 = \frac{1}{x_1^{-1}} & -\alpha_1 & \alpha_1 & 1 \\[5pt]

      x_2 = \frac{1}{x_2^{-1}} & -\alpha_2 & \alpha_2 & 1 \\[5pt]

      \frac{x_{2} + y_{1}}{x_{1}} & \alpha_1 & \alpha_2-\alpha_1 & y_1 + 1 \\[5pt]

      \frac{x_{1} y_{1} y_{2} + x_{2} + y_{1}}{x_{1} x_{2}} & \alpha_1+\alpha_2 & -\alpha_1 & y_1y_2 + y_1 + 1 \\[5pt]

      \frac{x_{1} y_{2} + 1}{x_{2}} & \alpha_2 & -\alpha_2 & y_2 + 1 \\[5pt]
      \hline

    \end{array}
  \]

\end{example}

\section{Definitions and main results}
\label{sec:results}

We start this section with recalling notions from finite root systems and from the theory of subword complexes and their relations to the cluster algebra $\Alg(W,c)$.
We refer to~\cite{PS2015} for a detailed treatment of these notions and further background.

\medskip

Consider the finite crystallographic Coxeter system~$(W,\sref)$ acting essentially on a Euclidean vector space~$V$ of dimension~$n$ with inner product~$\dotprod{\cdot}{\cdot}$, with \Dfn{simple roots} $\Delta = \set{\alpha_s}{s \in \sref}$ and \Dfn{simple coroots}~$\Delta^\vee = \set{\alpha_s^\vee}{s \in \sref}$.
We then have that $\alpha_s^\vee = 2\alpha_s / \dotprod{\alpha_s}{\alpha_s}$, that the \Dfn{crystallographic Cartan matrix} $A = (a_{st})_{s,t \in \sref}$ is given by $a_{st} = \dotprod{\alpha_t}{\alpha_s^\vee}$, and that $s(\alpha_t) = \alpha_t - a_{st} \alpha_s$.
The \Dfn{fundamental weights} $\nabla = \set{\omega_s}{s \in \sref}$ and \Dfn{fundamental coweights} $\nabla^\vee = \set{\omega_s^\vee}{s \in \sref}$ are the bases dual to the simple coroots and to the simple roots, respectively.
That is,
\[
  \dotprod{\omega_s}{\alpha_t^\vee} = \dotprod{\alpha_s}{\omega_t^\vee} = \delta_{s=t}.
\]
It is then easy to check that
\begin{align}
  \alpha_s = \sum_{t \in \sref}a_{ts}\omega_t, \quad \alpha_s^\vee = \sum_{t \in \sref}a_{st}\omega_t^\vee, \label{eq:rootweight}
\end{align}
and that moreover, $s(\omega_t) = \omega_t - \delta_{s=t}\alpha_s$ for $s \in \sref$.
Given the Coxeter element~$c$ with the fixed reduced word $\c = \s_1\cdots\s_n$, we will often write $\alpha_i = \alpha_{s_i}$, $\alpha_i^\vee = \alpha_{s_i}^\vee$, $\omega_i = \omega_{s_i}$, and $\omega_i^\vee = \omega_{s_i}^\vee$ to avoid double indices.
Denote by $L = \ZZ\Delta$ the $\ZZ$-lattice spanned by $\Delta$, by~$L^+$ the nonnegative span $\ZZ_{\geq 0}\Delta$, and by~$L^- = -L^+$ the nonpositive span.
We call $\beta \in L$ \Dfn{sign-coherent} if $\beta \in L^+ \sqcup L^-$.
Denote moreover by $\Phi = W(\Delta) = \bigset{w(\alpha_s)}{w \in W, s \in \sref} \subseteq L^+ \sqcup L^-$ the \Dfn{root system} for $(W,\sref)$, by $\Phiplus = \Phi \cap L^+$ the \Dfn{positive roots}, and by $\Phipm = \Phiplus \sqcup -\Delta \subseteq \Phi$ the \Dfn{almost positive roots}.
We often use the letter $N = |\Phiplus|$, so that $n+N = |\Phipm|$.

\medskip

Let~$\Q$ be a word in the simple system~$\sref$ and let $\rho \in W$.
The \Dfn{subword complex} $\subwordComplex(\Q,\rho)$ is the simplicial complex of (positions of) letters in~$\Q$ whose complement contains a reduced word of~$\rho$.
These complexes were introduced by A.~Knutson and E.~Miller in~\cite{KM2004}, we refer to \Cref{ex:subwordA2} on page~\pageref{ex:subwordA2} for a detailed example.
Observe that $\subwordComplex(\Q,\rho)$ is pure by construction with ground set $[m] = \{1,\ldots,m\}$ given by the indices of letters in~$\Q = \q_1\cdots\q_m$.
Its facets thus all have the same size and we consider them as sorted lists of integers, written in set notation.
This is, $I = \{i_1 < \ldots < i_n\}$ is a facet of $\subwordComplex(\Q,\rho)$ if and only if the word $\q_1\cdots \widehat \q_{i_1} \cdots \widehat \q_{i_n} \cdots \q_m$ with the letters $\q_{i_1},\ldots \q_{i_n}$ omitted is a reduced word for~$\rho \in W$.

\medskip

Recall the following fundamental observation about subword complexes.
It explains in all later constructions the independence of the chosen reduced word~$\c = \s_1\cdots\s_n$ of the Coxeter element~$c$.

\begin{lemma}[{\cite[Proposition~3.8]{CLS2014}}]\label{lem:commutation}
  Let~$\Q'$ be a word in~$\sref$ that coincides with~$\Q$ up to commutations of consecutive letters that commute in~$W$.
  Then $\subwordComplex(\Q,\rho) \cong \subwordComplex(\Q',\rho)$, and the isomorphism is given by the natural identification between letters in~$\Q$ and in~$\Q'$.
  \qed
\end{lemma}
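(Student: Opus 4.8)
The plan is to reduce the statement to a single elementary commutation and then to track individual letters through the swap. Since any two words related by commutations of consecutive commuting letters are connected by a finite sequence of such elementary swaps, and since a composite of simplicial isomorphisms is again a simplicial isomorphism, it suffices to treat the case where $\Q'$ differs from $\Q = \q_1 \cdots \q_m$ by swapping the letters in two adjacent positions $j, j+1$ whose letters $s := \q_j$ and $t := \q_{j+1}$ commute in $W$. In this case the natural identification of letters is the transposition $\phi = (j\ j+1)$ of the ground set $[m]$: the occurrence of $s$ that sits at position $j$ in $\Q$ sits at position $j+1$ in $\Q'$ and vice versa, while every other letter keeps its position.

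First I would record the elementary fact that $\phi$ induces a product- and length-preserving bijection on subwords. Concretely, for any $P \subseteq [m]$ read in increasing order, let $\pi_{\Q}(P) \in W$ denote the product of the letters of $\Q$ indexed by $P$. I claim that $\pi_{\Q}(P) = \pi_{\Q'}(\phi(P))$ and that the two subwords have the same length. The length statement is immediate since $\phi$ is a bijection of $[m]$. For the product, observe that $\Q$ and $\Q'$ agree letter-for-letter at every position outside $\{j,j+1\}$ and that the two selections pick out the same multiset of letters: if at most one of $j, j+1$ lies in $P$, then the selected letters occur in exactly the same order in both words, while if both lie in $P$, the only change is that the consecutive factor $st$ is replaced by $ts$, which leaves the product unchanged because $st = ts$. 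Since a word is a reduced word for $\rho$ precisely when its product equals $\rho$ and its length equals $\ell(\rho)$, the bijection $P \mapsto \phi(P)$ carries reduced words for $\rho$ in $\Q$ to reduced words for $\rho$ in $\Q'$, and conversely.

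With this in hand the lemma follows directly from the definition of the subword complex. A set $I$ is a face of $\subwordComplex(\Q,\rho)$ exactly when its complement $[m] \setminus I$ contains a subword on some positions $P \subseteq [m]\setminus I$ that is a reduced word for $\rho$. Applying $\phi$, which is a bijection of $[m]$ and hence satisfies $\phi([m]\setminus I) = [m]\setminus\phi(I)$, turns such a witness $P$ into a witness $\phi(P) \subseteq [m]\setminus\phi(I)$ that is again a reduced word for $\rho$ by the previous paragraph. Thus $I \in \subwordComplex(\Q,\rho)$ if and only if $\phi(I) \in \subwordComplex(\Q',\rho)$, so $\phi$ is a simplicial isomorphism, and it is visibly the natural identification of letters.

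The argument is entirely elementary; the only point requiring any care is the product-invariance in the one case where both swapped positions are selected, which is exactly where the hypothesis $st = ts$ is used. The remaining work is the bookkeeping needed to see that $\phi$ respects complements and to reduce a general sequence of commutations to single swaps, neither of which presents a real obstacle.
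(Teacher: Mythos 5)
Your proof is correct and complete: the reduction to a single elementary swap of commuting letters, the product- and length-preserving bijection $P \mapsto \phi(P)$ on position sets, and the check that $\phi$ respects complements are precisely the points that need verification. The paper itself gives no proof of this lemma but cites \cite[Proposition~3.8]{CLS2014}, and your elementary argument is essentially the same one used there, so your write-up simply makes the citation self-contained.
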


In this paper, we are only interested in the case that~$\rho = \wo \in W$ is the unique longest element with respect to the weak order, and $\Q$ being one specific word constructed from the Coxeter element~$c$.
We thus write $\subwordComplex(\Q)$ for $\subwordComplex(\Q,\wo)$ and assume that~$\Q$ does indeed contain a reduced word for~$\wo$.
This immediately implies that $\subwordComplex(\Q)$ is a simplicial sphere, see~\cite[Theorem~3.7]{KM2004}.
Define~$\greedy$ and $\antigreedy$ to be the lexicographically first and last facets of $\subwordComplex(\Q)$, respectively.
These are called \Dfn{greedy facet} and \Dfn{antigreedy facet}.

\medskip

For $\Q = \q_1 \cdots \q_m$, associate to any facet~$I$ of the subword complex~$\subwordComplex(\Q)$ a \Dfn{root function} $\Root{I}{\cdot} : [m] \to W(\Delta)$ and a \Dfn{weight function}~$\Weight{I}{\cdot} : [m] \to W(\nabla)$ defined by
\[
  \Root{I}{k} = \wordprod{\Q}{[k-1] \ssm I}(\alpha_{q_k}) \quad \text{and} \quad \Weight{I}{k} = \wordprod{\Q}{[k-1] \ssm I}(\omega_{q_k}),
\]
where~$\wordprod{\Q}{X}$ denotes the product of the simple reflections~$q_x \in \Q$, for~$x \in X$, in the order given by~$\Q$.
For later convenience, we as well define the \Dfn{coroot function} $\coRoot{I}{\cdot} : [m] \to W(\Delta^\vee)$ and a \Dfn{coweight function}~$\coWeight{I}{\cdot} : [m] \to W(\nabla^\vee)$ by
\[
  \coRoot{I}{k} = \wordprod{\Q}{[k-1] \ssm I}(\alpha_{q_k}^\vee) \quad \text{and} \quad \coWeight{I}{k} = \wordprod{\Q}{[k-1] \ssm I}(\omega_{q_k}^\vee).
\]

Observe that it is immediate from this definition that all these functions are invariant under the isomorphism given in \Cref{lem:commutation}.
The root function (and, equivalently, the coroot function) locally encodes the flip property in the subword complex: each facet adjacent to~$I$ in~$\subwordComplex(\Q)$ is obtained by exchanging an element~${i \in I}$ with the unique element~$j \notin I$ such that $\Root{I}{j} \in \{ \pm \Root{I}{i} \}$.
If $i < j$ such a flip is called \Dfn{increasing}, and it is called \Dfn{decreasing} otherwise.
Observe that the greedy facet and the antigreedy facet are the unique facets such that every flip is increasing and decreasing, respectively.
After this exchange, the root function and the weight function are updated by an application of~$s_{\Root{I}{i}}$ as recalled in \Cref{lem:lemma33}.
The root and the coroot functions are used to define the \Dfn{root configuration} and the \Dfn{coroot configuration} of the facet~$I$ as the ordered multisets
\begin{align}
  \Roots{I} = \bigmultiset{\Root{I}{i}}{i \in I},\quad \coRoots{I} = \bigmultiset{\coRoot{I}{i}}{i \in I}. \label{eq:rootconfiguration}
\end{align}

Similarly, the weight and the coweight functions are used to define the \Dfn{weight configuration} and the \Dfn{coweight configuration}
\begin{align}
  \Weights{I} = \bigmultiset{\Weight{I}{i}}{i \in I},\quad \coWeights{I} = \bigmultiset{\coWeight{I}{i}}{i \in I}. \label{eq:weightconfiguration}
\end{align}

By ordered multiset we simply mean the ordered tuple written in set notation.
For later convenience, we denote by
\[
  \Root{I}{i}_j = \dotprod{\Root{I}{i}}{\omega_j^\vee}
\]
the coefficient of~$\alpha_j$ in the root $\Root{I}{i}$.

\medskip

On the other hand, the weight function is used to define the \Dfn{brick vector} of~$I$ as
\[
  \brickVector(I) = \sum_{k \in [m]} \Weight{I}{k},
\]
and the \Dfn{brick polytope} of~$\Q$ is defined to be the convex hull of the brick vectors of all facets of the subword complex~$\subwordComplex(\Q)$,
\[
  \brickPolytope(\Q) = \conv \bigset{\brickVector(I)}{I \text{ facet of } \subwordComplex(\Q)}.
\]
It is shown in~\cite{PS2015} that the brick polytope~$\brickPolytope(\Q)$ is the Minkowski sum of a Coxeter matroid polytope in the sense of~\cite{BGW2003}.

\begin{theorem}[{\cite[Proposition~1.5]{PS2015}}]\label{thm:minkowski}
  For any word $\Q$ in~$\sref$ of length~$m$ containing a reduced word for~$\wo$ we have that
  \[
    \brickPolytope(\Q) = \Mink_{k \in [m]} \brickPolytope(\Q,k)
  \]
  where $\brickPolytope(\Q,k) = \conv \bigset{\Weight{I}{k}}{I \text{ facet of } \subwordComplex(\Q)}$.
  \qed
\end{theorem}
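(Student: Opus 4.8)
The plan is to establish the equality of the two polytopes by comparing their support functions. For a polytope $P \subseteq V$ and a direction $x \in V$, write $h_P(x) = \max_{p \in P}\dotprod{p}{x}$; two polytopes coincide if and only if their support functions agree on all of $V$, and the support function of a Minkowski sum is the sum of the support functions of the summands. Since a convex hull and its vertex set have the same support function, for every $x \in V$ we have
\[
  h_{\brickPolytope(\Q)}(x) = \max_{I}\, \sum_{k \in [m]} \dotprod{\Weight{I}{k}}{x}
  \qquad\text{and}\qquad
  h_{\Mink_{k}\brickPolytope(\Q,k)}(x) = \sum_{k \in [m]} \max_{I}\, \dotprod{\Weight{I}{k}}{x},
\]
both maxima ranging over the facets $I$ of $\subwordComplex(\Q)$. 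The inequality $h_{\brickPolytope(\Q)}(x) \le h_{\Mink_{k}\brickPolytope(\Q,k)}(x)$ is immediate, as the maximum of a sum is at most the sum of the maxima; equivalently, each brick vector $\brickVector(I) = \sum_{k} \Weight{I}{k}$ lies in the Minkowski sum because $\Weight{I}{k} \in \brickPolytope(\Q,k)$, so $\brickPolytope(\Q) \subseteq \Mink_{k}\brickPolytope(\Q,k)$. This is the easy inclusion.

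The reverse inequality is the substance of the statement. By continuity of support functions it suffices to treat regular directions $x$, for which each summand functional $\dotprod{\Weight{\cdot}{k}}{x}$ is maximized at a unique weight. For such an $x$ I would produce a single facet $I^\star$ that simultaneously maximizes $\dotprod{\Weight{\cdot}{k}}{x}$ for every $k \in [m]$; then $h_{\brickPolytope(\Q)}(x) \ge \sum_{k} \dotprod{\Weight{I^\star}{k}}{x} = \sum_{k} \max_{I} \dotprod{\Weight{I}{k}}{x} = h_{\Mink_{k}\brickPolytope(\Q,k)}(x)$, closing the argument. The natural candidate is a facet $I^\star$ maximizing the global functional $I \mapsto \dotprod{\brickVector(I)}{x}$, which is then locally optimal in the sense that no flip increases this value. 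The essential tool is the update rule of \Cref{lem:lemma33}: flipping $i \in I$ against $j \notin I$ applies the reflection $s_{\Root{I}{i}}$ to the weight function on a contiguous range of positions and fixes it elsewhere, so the brick vector moves by a scalar multiple of the root $\Root{I}{i}$, the sign of the scalar being governed by whether the flip is increasing or decreasing.

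The crux — and the step I expect to be the main obstacle — is to show that local optimality of $\dotprod{\brickVector(\cdot)}{x}$ forces simultaneous maximality of every summand functional $\dotprod{\Weight{\cdot}{k}}{x}$. Suppose to the contrary that $I^\star$ is locally optimal yet some position $k_0$ admits a facet with a strictly larger value of $\dotprod{\Weight{\cdot}{k_0}}{x}$. Here I would exploit that each $\Weight{I}{k}$ lies in the $W$-orbit $W(\omega_{q_k})$ of a fundamental weight, so that every summand $\brickPolytope(\Q,k)$ is a weight (Coxeter matroid) polytope in the sense of \cite{BGW2003}, together with the connectivity of the flip graph along root directions. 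The aim is to convert the failure of maximality at $k_0$ into an improving flip at $I^\star$, contradicting local optimality; this reduces to a careful exchange argument matching the reflection in $\Root{I^\star}{i}$ furnished by \Cref{lem:lemma33} against the functional $x$. I expect this compatibility between the local reflection moves and the global linear functional to be the technical heart of the proof, with the greedy facet $\greedy$ and the antigreedy facet $\antigreedy$ serving as the anchoring extreme cases for an induction on flips.
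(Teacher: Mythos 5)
The paper itself contains no proof of this statement: it is quoted from \cite{PS2015} (Proposition~1.5 there) and stamped with a \qed, so your attempt must be measured against the argument in that reference. As it happens, your framework is exactly the one used there: equality of support functions, the easy inclusion $\brickPolytope(\Q) \subseteq \Mink_{k \in [m]} \brickPolytope(\Q,k)$ coming from $\brickVector(I) = \sum_k \Weight{I}{k}$, and the reduction of the hard inclusion to exhibiting, for each generic direction $x$, a \emph{single} facet $I^\star$ whose weight $\Weight{I^\star}{k}$ maximizes $\dotprod{\cdot}{x}$ in every summand $\brickPolytope(\Q,k)$ simultaneously. Up to that point the proposal is correct.

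The gap is that this crucial step is only announced (``I would produce\dots'', ``the aim is to convert\dots''), never carried out, and the plan you sketch for it would not work as stated. You propose a contradiction: if $I^\star$ maximizes $\dotprod{\brickVector(\cdot)}{x}$ but some facet $J$ beats it at a position $k_0$, then manufacture an improving flip at $I^\star$. But the facet $J$ witnessing the failure can be arbitrarily far from $I^\star$ in the flip graph, and nothing about it is visible locally at $I^\star$: simultaneous maximality is a global property, not a consequence of local optimality plus one exchange. What \emph{is} true locally is the following sharpening of your third paragraph: by \Cref{lem:lemma33}\eqref{it:weightflip} together with the positivity underlying \Cref{lem:positiveweightshift} (valid for any spherical subword complex, cf.~\cite{PS2015}), a flip of $i \in I$ moves the brick vector \emph{and each individual weight} $\Weight{I}{k}$ by a nonnegative multiple of the single vector $-\Root{I}{i}$ (for a decreasing flip this root is negative; the displacement is not ``signed by the flip type'' as you write). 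Hence $I^\star$ is locally optimal if and only if $x$ lies in the dual cone $\set{x}{\dotprod{x}{r} \geq 0 \text{ for all } r \in \Roots{I^\star}}$ of its root configuration, and in that case $I^\star$ is automatically a local maximum of every summand functional as well. What is missing---and what is the actual content of the cited proposition---is the local-to-global step: that these dual cones, taken over all facets, form a complete fan, equivalently that for generic $x$ the $x$-oriented flip graph has a unique sink reachable from every facet by $x$-monotone flips. This is where the hypothesis that $\Q$ contains a reduced word for $\wo$ enters (it makes the root configurations linear bases, so the cones are full-dimensional and pointed), and it cannot be bypassed by arguing on each summand separately: a flip may displace $\Weight{\cdot}{k_0}$ by the zero multiple, so the $k_0$-functional has many flat flips and far more local maxima than the brick functional. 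Your proposal stops exactly where this work begins.
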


For the Coxeter element~$c$ with fixed reduced word $\c = \s_1\cdots\s_n$, the \Dfn{Coxeter-sorting word} (or \Dfn{$\c$-sorting word}) $\c(\rho)$ of an element $\rho \in W$ is given by the lexicographically first subword of $\c^\infty$ that is a reduced word for~$\rho$.
Observe that the word $\c(\rho)$ does depend on the chosen reduced word, but that it should instead been thought of as being associated to the Coxeter element~$c$ and being defined up to commutations of consecutive commuting letter.
The notion of $c$-sorting words was defined by N.~Reading in~\cite{R2007} and plays a crucial role in the combinatorial descriptions of finite type cluster algebras and in particular in the description of cluster complexes in terms of subword complexes.
The main results in~\cite{CLS2014} provides the following description of the combinatorics of the cluster complex of $\Alg(W,c)$, where we observe from \Cref{lem:commutation} that the subword complex does not depend on the chosen reduced word for~$c$.

\begin{theorem}[{\cite[Theorem~2.2]{CLS2014}}]\label{thm:cluster1}
  The cluster complex of the cluster algebra $\Alg(W,c)$ is isomorphic to the subword complex $\clusterComplex$.
  \qed
\end{theorem}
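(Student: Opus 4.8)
The plan is to construct an explicit bijection between the ground set of $\clusterComplex$ and the cluster variables of $\Alg(W,c)$ and then to promote it to a simplicial isomorphism by matching facets with clusters and flips with mutations. For the vertex correspondence, note that $\cw{c} = \c\,\cwo{c}$ has length $n+N$ and that $\cwo{c}$ is a reduced word for $\wo$, so the initial positions $\greedy = \{1,\dots,n\}$ already form a facet. A direct evaluation of the root function gives $\Root{\greedy}{k} = \alpha_k$ for $k \le n$, while for $n < k \le n+N$ the roots $\Root{\greedy}{k}$ run precisely through the inversion sequence of the $\c$-sorting word $\cwo{c}$. Since the inversion sequence of any reduced word for $\wo$ lists every positive root exactly once, sending $k \mapsto -\alpha_k$ for $k \le n$ and $k \mapsto \Root{\greedy}{k}$ for $k > n$ is a bijection from positions of $\cw{c}$ onto $\Phipm$; composing with the $d$-vector bijection of \cite{FZ2003} recalled in the introduction identifies positions with cluster variables, the initial block $\{1,\dots,n\}$ matching the initial cluster $\{x_1,\dots,x_n\}$.

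The heart of the argument is to show that a subset $I$ is a facet of $\clusterComplex$ exactly when the associated almost positive roots form a cluster of $\Alg(W,c)$. Here I would route through N.~Reading's $\c$-sortable elements \cite{R2007}. On the combinatorial side, the complement of a facet of $\subwordComplex(\cw{c})$ is a reduced word for $\wo$ inside $\cw{c}$, and reading it greedily against $\cw{c}$ factors it canonically, producing a bijection between facets and $\c$-sortable elements of $W$. On the cluster side, the Cambrian theory of Reading and Speyer \cite{RS2016} identifies $\c$-sortable elements with the clusters of $\Alg(W,c)$, the $\c$-Cambrian fan coinciding with the cluster fan. Matching these two bijections along their common index set, and checking that the root configuration $\Roots{I}$ (under the sign convention above) is exactly the set of almost positive roots of the corresponding cluster, establishes the facet correspondence.

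It then remains to see that the incidence structures agree, which I would verify by matching flips with mutations. The local update rule recalled in \Cref{lem:lemma33} shows that a flip of $I$ at $i$ negates $\Root{I}{i}$ and applies $s_{\Root{I}{i}}$ to the remaining roots; this is precisely the transformation the exchange of a single cluster variable induces on the surrounding root data, so each flip of a facet corresponds to one mutation of the associated cluster and conversely. As both $\clusterComplex$ and the cluster complex of $\Alg(W,c)$ are pure simplicial spheres of dimension $n-1$ whose facets and facet-adjacencies have now been put in correspondence, the vertex bijection extends uniquely to the desired simplicial isomorphism.

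The main obstacle is the facet step. Nothing in the definition of the subword complex makes it evident that the root configuration of an arbitrary facet lands in $\Phipm$ and forms a set of pairwise compatible (hence clusterable) roots, nor that every cluster is attained. Both facts rest on the nontrivial input that the $\c$-Cambrian fan equals the cluster fan, \ie\ on Reading's classification of $\c$-sortable elements and its compatibility with Fomin--Zelevinsky compatibility of almost positive roots. Absorbing that theory, rather than the essentially formal flip bookkeeping, is where the real work lies.
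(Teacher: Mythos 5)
You should first be aware that this paper gives no proof of \Cref{thm:cluster1} at all: the statement is imported from \cite{CLS2014} as an external result, so the only meaningful comparison is with the proof given there. At the level of strategy you are actually close to that proof: your first paragraph (labelling the positions of $\cw{c}$ by almost positive roots via the greedy facet's root function and the inversion sequence of $\cwo{c}$) is exactly the setup of \cite{CLS2014}, and, like you, they outsource the hard compatibility statement to N.~Reading's theory --- though they do so by verifying that the subword complex satisfies Reading's inductive characterization of $c$-compatibility (an initial-letter condition together with invariance under the rotation $c \mapsto scs$), rather than by interfacing with the Cambrian fan of \cite{RS2016} as you propose. So leaning on \cite{R2007} is not the problem; the cited proof does the same.

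The genuine gap is that your ``heart of the argument'' aims at the wrong object. The cluster attached to a facet $I$ consists of the labels of the \emph{positions} in $I$ under the fixed bijection of your first paragraph (the $d$-vectors); it is \emph{not} the root configuration $\Roots{I} = \bigmultiset{\Root{I}{i}}{i \in I}$, which is computed with the root function of $I$ itself and which, by this paper's \Cref{thm:extendedpart}, gives the $c$-vectors of the seed. The two generally differ, and $\Roots{I}$ need not lie in $\Phipm$ at all: in the type $A_2$ example (\Cref{ex:subwordA2}), the facet $\{3,4\}$ has cluster $\{\alpha_1,\ \alpha_1+\alpha_2\}$ but root configuration $\{\alpha_2,\ -\alpha_1-\alpha_2\}$, and $-\alpha_1-\alpha_2 \notin \Phipm$; no sign convention reconciles the two sets. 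So the verification you propose (``$\Roots{I}$ equals the set of almost positive roots of the corresponding cluster'') fails already in rank $2$, and the difficulty you flag --- that $\Roots{I}$ might not land in $\Phipm$ --- is not an obstacle to overcome but evidence of the misidentification. Two further steps are asserted rather than proved: the bijection between facets and $c$-sortable elements (``reading the complement greedily'' is not well defined --- the complement of a facet is a reduced word for $\wo$, and distinct facets can carry the same abstract word, distinguished only by their positions; such a bijection exists, essentially the map $\kappa$ of \cite{PS2015}, but it is a theorem, not bookkeeping), and the flip--mutation dictionary, which presupposes the very correspondence being constructed. A repaired version of your plan must replace $\Roots{I}$ by the position labels throughout and then genuinely prove that $I$ is a facet if and only if its labels are pairwise $c$-compatible; that equivalence is precisely where the induction of \cite{CLS2014} (or a careful use of Reading--Speyer) does the real work.
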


We thus refer to $\clusterComplex$ as the \Dfn{$c$-cluster complex}.
We moreover remark that the abstract simplicial complex $\clusterComplex$ does not depend on the chosen Coxeter element~$c$, while its combinatorics in the sense of root and weight functions does depend on~$c$.
This is, $\subwordComplex\big(\cw{c}\big) \cong \subwordComplex\big(\cw{c'}\big)$ for any two Coxeter elements~$c$ and~$c'$ with reduced words~$\c$ and~$\c'$, respectively, see~\cite[Theorem~2.6]{CLS2014}.

\medskip

One identifies positions in $\cw{c}$ and almost positive roots by sending the $k$\th\ letter $\s_{k}$ ($1 \leq k \leq n$) of the initial copy of~$\c$ to the negative simple root $-\alpha_k$, and the $k$\th\ letter $\q_k$ ($1 \leq k \leq N$) of $\sw{c}{w_{\circ}}$ to the positive root $q_1\cdots q_{k-1}(\alpha_{q_k})$.
See \Cref{lem:lemma33}\eqref{it:bijectionpositiveroots} on page~\pageref{lem:lemma33} that this indeed is a bijection, and observe that this equals in the natural way the root function of the greedy facet and of the antigreedy facet.
In symbols, this is for $1 \leq k \leq N$
\begin{align}
  q_1\cdots q_{k-1}(\alpha_{q_k}) = \Root{\antigreedy}{k} = \Root{\greedy}{n+k}. \label{eq:rootbijection}
\end{align}

That identification yields the isomorphism in \Cref{thm:cluster1} by sending a cluster to the positions inside the word~$\cw{c}$ corresponding to the almost positive roots of the $d$-vectors of the cluster.
To make this explicit, we use the following notation:
let~$I = \{ i_1 < \ldots < i_n\}$ be a facet of the cluster complex $\clusterComplex$.
We then denote by $S(I) = \big(\mutmatrix(I),\u(I),\f(I)\big)$ with
\begin{align*}
  \mutmatrix(I) &= \begin{bmatrix} \Mpr(I) \\ \Mex(I) \end{bmatrix} \\
  \u(I) &= \big(u_{i_1}(I),\ldots,u_{i_n}(I)\big) \\
  \f(I) &= \big(f_{i_1}(I),\ldots,f_{i_n}(I)\big)
\end{align*}
the cluster seed of $\Alg(W,c)$ corresponding to~$I$ under the given isomorphism between cluster variables, almost positive roots, and positions in the word $\cw{c}$.
The columns of $\mutmatrix(I)$ are then also indexed by the positions $i_1,\ldots,i_n$ of~$I$ as are the rows of $\Mpr(I)$, while the rows of $\Mex(I)$ are indexed by the positions $1,\ldots,n$ (which are the positions of the greedy facet~$\greedy = \{1,\ldots,n\}$).
We also denote by~$\cvec{I,i}$ the $c$-vector coming from column $i \in I$ of $\Mex(I)$, and by $\gvec{I,i}$ the $g$-vector of the entry $u_i(I)$.

\medskip

Polar polytopal realizations of the cluster complex were first obtained by F.~Cha\-po\-ton, S.~Fomin, and A.~Zelevinsky in~\cite{CFZ2002}.
C.~Hohlweg, C.~Lange, and H.~Thomas then constructed in~\cite{HLT2011} a generalization depending on a Coxeter element~$c$, that reduces for bipartite~$c$ to the construction in~\cite{CFZ2002}.
As one obtains for type~$A_n$ classical constructions of associahedra, such polytopal realizations are called \Dfn{$c$-associahedra}.
The subword complex approach and the brick polytope construction provide a rather simple construction of these.

\begin{theorem}[{\cite[Theorem~4.9]{PS2015}}]\label{thm:cluster2}
  The cluster complex of $\Alg(W,c)$ is realized by the polar of the brick polytope~$\brickPolytope\big(\cw{c}\big)$.
  \qed
\end{theorem}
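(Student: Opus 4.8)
The plan is to combine the combinatorial identification of \Cref{thm:cluster1} with a geometric realization. Since the cluster complex of $\Alg(W,c)$ is isomorphic to the subword complex $\subwordComplex\big(\cw{c}\big)$, it suffices to prove that this subword complex is the boundary complex of the polar of $\brickPolytope\big(\cw{c}\big)$. Equivalently, I would show that the normal fan of $\brickPolytope\big(\cw{c}\big)$ is a complete simplicial fan whose maximal cones are indexed by the facets of $\subwordComplex\big(\cw{c}\big)$ and whose rays are indexed by the positions in $\cw{c}$ (equivalently by the almost positive roots $\Phipm$, equivalently by the cluster variables). Polar duality then turns such a fan into a simplicial polytope whose boundary complex is the subword complex.

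First I would establish that $I \mapsto \brickVector(I)$ is a bijection from the facets of $\subwordComplex\big(\cw{c}\big)$ onto the vertices of $\brickPolytope\big(\cw{c}\big)$: using the Minkowski decomposition of \Cref{thm:minkowski}, each brick vector is a sum of extreme points $\Weight{I}{k}$ of the summands $\brickPolytope\big(\cw{c},k\big)$, distinct facets yield distinct weight functions, and so each $\brickVector(I)$ is a vertex and the map is injective. Next I would analyze the edges emanating from a fixed vertex $\brickVector(I)$. For $i \in I$, let $I'$ be the facet obtained by the flip exchanging $i$ with the unique $j \notin I$ satisfying $\Root{I}{j} \in \{\pm\Root{I}{i}\}$. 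By the update rule recalled in \Cref{lem:lemma33}, the weight function of $I'$ differs from that of $I$ only on the positions between $i$ and $j$, where it is obtained by applying the reflection $s_{\Root{I}{i}}$. Writing $\beta = \Root{I}{i}$ and using $s_\beta(x) = x - \dotprod{x}{\beta^\vee}\beta$, summing over all positions gives
\[
  \brickVector(I) - \brickVector(I') = \Big( \sum_{k}\dotprod{\Weight{I}{k}}{\beta^\vee} \Big)\, \beta ,
\]
a nonzero multiple of $\Root{I}{i}$. Thus each of the $n$ flips at $I$ produces a neighbouring brick vector differing from $\brickVector(I)$ in the direction of the corresponding root of the root configuration $\Roots{I}$.

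It then remains to pass from this local picture to the global face structure. The key input is that the root configuration $\Roots{I}$ is a linear basis of $V$ for every facet (root independence of $\subwordComplex\big(\cw{c}\big)$), so the $n$ edge directions at $\brickVector(I)$ are linearly independent and $\brickVector(I)$ is a \emph{simple} vertex; its normal cone is the simplicial cone dual to $\Roots{I}$. Under the pairing $\dotprod{\Root{I}{i}}{\omega_j^\vee}$ this dual basis is identified with the weight configuration $\Weights{I}$, so the rays of the normal fan are precisely the weight functions and are naturally indexed by positions in $\cw{c}$. Assembling the normal cones $\{\mathcal{N}(\brickVector(I))\}$ over all facets, the flip property forces each wall (the codimension-one cone of a ridge of the subword complex) to be shared by exactly two maximal cones, in accordance with the fact, recorded after \Cref{lem:commutation}, that $\subwordComplex\big(\cw{c}\big)$ is a simplicial sphere. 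This identifies the collection as a complete simplicial fan with face poset dual to the subword complex, and recognizing it as the normal fan of $\brickPolytope\big(\cw{c}\big)$ yields, by polar duality, a polytope whose boundary complex is $\subwordComplex\big(\cw{c}\big)$, which by \Cref{thm:cluster1} is the cluster complex of $\Alg(W,c)$.

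The hard part is the final globalization step: proving that the locally correct normal cones actually fit together into a \emph{complete} fan, equivalently that the brick vectors are in genuine convex position with exactly the flip-graph as their edge graph and no spurious faces. Each vertex is individually simple by the root-independence argument, but controlling the global geometry — ruling out gaps or overlaps among the normal cones — is the crux, and this is precisely where the Minkowski-sum structure of \Cref{thm:minkowski} does the decisive work, reducing completeness to a common-refinement statement for the Coxeter-matroid-polytope summands combined with the sphericity of the subword complex.
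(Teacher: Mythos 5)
A preliminary point of reference: the paper itself contains no proof of this statement — it is quoted from \cite{PS2015} (Theorem~4.9 there) and closed with a QED box — so your attempt has to be measured against the proof in that reference, whose skeleton your outline does broadly mirror (root configurations give simplicial cones, flips give walls, the resulting complete fan is the normal fan of the brick polytope, and polar duality finishes). Within that skeleton, however, there are two genuine gaps. The first is your opening claim that every $\brickVector(I)$ is a vertex of $\brickPolytope\big(\cw{c}\big)$ and that $I\mapsto\brickVector(I)$ is injective. Your justification via \Cref{thm:minkowski} is not valid: a point $\Weight{I}{k}$ of the defining set of $\brickPolytope\big(\cw{c},k\big)$ need not be extreme in it, and even a sum of genuine vertices of Minkowski summands is a vertex of the sum only when all summand-vertices are selected by one common linear functional; likewise, distinct weight functions could a priori have equal sums. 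In the correct argument, vertexhood and injectivity are \emph{outputs} of the fan analysis (each full-dimensional cone polar to the cone spanned by $\Roots{I}$ turns out to be the entire normal cone of $\brickVector(I)$, and distinct facets get distinct full-dimensional cones), not inputs to it. A smaller instance of the same over-assertion: your edge formula exhibits $\brickVector(I)-\brickVector(I')$ as a multiple of $\Root{I}{i}$, but the nonvanishing of the coefficient requires a computation (the term at position $j$ of the flip contributes exactly $1$), which you omit.

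The second gap is decisive: the globalization step, which you rightly call the crux, is not proven but deferred to ``the Minkowski-sum structure of \Cref{thm:minkowski} combined with sphericity.'' This cannot be the key input, because both ingredients hold for \emph{every} word $\Q$ containing a reduced word for $\wo$ — the decomposition of \Cref{thm:minkowski} and the sphericity of $\subwordComplex(\Q)$ are completely general — while the realization statement is false for general $\Q$; so no argument using only these facts (plus root independence for simpliciality of the individual cones) can succeed. What actually closes the argument in \cite{PS2015} is a different two-part mechanism. (i) Flip-monotonicity gives covering: if a generic functional $f$ satisfies $f\big(\Root{I}{i}\big)<0$ for some $i\in I$, then the flip at $i$ strictly increases $f(\brickVector(\cdot))$; since there are only finitely many brick vectors, any facet maximizing $f$ must have $f$ nonnegative on its whole root configuration, so the polar cones cover all of $V$ — no gaps. (ii) A degree-one (pseudo-manifold) argument gives disjointness: adjacent cones share exactly a wall and lie on opposite sides of it, and one exhibits a single generic point covered exactly once — a strictly dominant functional works, since it lies only in the cone of $\greedy$, the unique facet whose root configuration is contained in $\Phiplus$ — whence the covering multiplicity is one everywhere and the interiors are pairwise disjoint. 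Without (i) and (ii), ``the normal cones assemble into a complete fan'' is precisely the statement to be proven, and nothing in your proposal supplies it.
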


This polytopal realization turns out to be equal to the construction in~\cite{HLT2011} up to a translation, see~\cite[Corollary~6.10]{PS2015}.
Its main advantage is that is provides a vertex description that yields a very simple proof of \Cref{thm:cluster2}.
This construction lives inside the weight space, while its natural translation by $\brickVector(\antigreedy)$ lives inside the root space.
We will conjecturally see in \Cref{conj:newtonpolytope} that this is closely related to the $F$-polynomials also ``living inside the root space''.

\medskip

Next, we recall how the indexing of the principal part of the exchange matrix is chosen, and why one can think of it as a \emph{matrix of scalars}.

\begin{theorem}[{\cite[Theorem~6.40]{PS2015}}]\label{thm:exchangematrix}
  Let~$I$ be a facet of $\clusterComplex$.
  The principal part of the exchange matrix $\mutmatrix(I)$ is then given for $i,j \in I$ by
  \begin{align*}
    [\Mpr(I)]_{ij} =
    \begin{cases}
        -\bigdotprod{\Root{I}{j}}{\coRoot{I}{i}} & \text{ if } i < j, \\[5pt]
        \phantom{-}\bigdotprod{\Root{I}{j}}{\coRoot{I}{i}} & \text{ if } i > j, \\[5pt]
        \phantom{-}0 & \text{ if } i = j.
    \end{cases}
  \end{align*}
  \qed
\end{theorem}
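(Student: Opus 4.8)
The plan is to prove the identity by induction over the facets of $\clusterComplex$, exploiting that the flip graph of the subword complex is connected (this is immediate from the fact, recalled above, that $\subwordComplex(\Q)$ is a simplicial sphere). It therefore suffices to check the formula on a single initial facet and then to verify that, under one flip, the right-hand side transforms exactly as the principal part of the exchange matrix does under the corresponding seed mutation. The left-hand side evolves by the matrix-mutation rule
\[
  [\Mpr(I')]_{k\ell} = \begin{cases} -[\Mpr(I)]_{k\ell} & k=i \text{ or } \ell=i, \\[3pt] [\Mpr(I)]_{k\ell} + \tfrac12\big(\,|[\Mpr(I)]_{ki}|\,[\Mpr(I)]_{i\ell} + [\Mpr(I)]_{ki}\,|[\Mpr(I)]_{i\ell}|\,\big) & \text{else}, \end{cases}
\]
where $i$ is the mutated direction; the task is to recover this from the update of the root and coroot functions.

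For the base case take the greedy facet $\greedy = \{1,\dots,n\}$, whose seed is the initial seed $(\mutmatrix_c,\x,\y)$. Since $[i-1]\ssm\greedy = \emptyset$ for $i \le n$, one has $\Root{\greedy}{i} = \alpha_i$ and $\coRoot{\greedy}{i} = \alpha_i^\vee$, hence $\bigdotprod{\Root{\greedy}{j}}{\coRoot{\greedy}{i}} = \dotprod{\alpha_j}{\alpha_i^\vee} = a_{ij}$. With the convention that $s_i \rightarrow s_j$ precisely when $i<j$, comparison with \eqref{eq:bmatrix} gives $[\Mpr(\greedy)]_{ij} = -a_{ij}$ for $i<j$ and $[\Mpr(\greedy)]_{ij} = a_{ij}$ for $i>j$, which is exactly the asserted formula (the case $a_{ij}=0$ being trivially consistent).

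For the inductive step, suppose the formula holds at a facet $I$ and consider the flip exchanging $i \in I$ with $j \notin I$; put $\beta = \Root{I}{i}$, so that $\beta^\vee = \coRoot{I}{i}$, and let $t = s_\beta$. By \Cref{lem:lemma33} the root function of the new facet $I'$ is obtained from that of $I$ by applying $t$ to exactly the positions strictly between $i$ and $j$, with the coroot function transformed by the same reflection, while $\Root{I'}{j} = -\Root{I}{i}$. Writing the entry as $[\Mpr(I')]_{k\ell} = \mathrm{sgn}(k-\ell)\bigdotprod{\Root{I'}{\ell}}{\coRoot{I'}{k}}$ and using that $t \in W$ preserves the pairing, an entry in which both $\Root{I'}{\ell}$ and $\coRoot{I'}{k}$ are reflected, or neither is, is unchanged; an entry in which exactly one is reflected acquires, via $t(\gamma) = \gamma - \dotprod{\gamma}{\beta^\vee}\beta$, the correction
\[
  -\bigdotprod{\Root{I}{\ell}}{\coRoot{I}{i}}\,\bigdotprod{\Root{I}{i}}{\coRoot{I}{k}},
\]
which up to the signs $\mathrm{sgn}(i-\ell)$ and $\mathrm{sgn}(k-i)$ is the product $[\Mpr(I)]_{ki}\,[\Mpr(I)]_{i\ell}$ --- precisely the quadratic term of the mutation rule. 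The sign flip of the $i$-th row and column is in turn produced by $\Root{I'}{j} = -\Root{I}{i}$.

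The main obstacle is the sign- and interval-bookkeeping that turns this correspondence into an exact equality. One must show, distinguishing increasing from decreasing flips and organizing the positions $k,\ell$ according to their location relative to the interval $[\min(i,j),\max(i,j)]$, that the pairs receiving a one-sided reflection are precisely those for which the mutation correction $\tfrac12(|b_{ki}|b_{i\ell}+b_{ki}|b_{i\ell}|)$ is nonzero, and that the accumulated ordering signs $\mathrm{sgn}(k-\ell)\,\mathrm{sgn}(i-\ell)\,\mathrm{sgn}(k-i)$ reproduce the $\max(\cdot,0)$ sign selection. The key geometric input here is the sign-coherence of the values $\Root{I}{i}$ of the root configuration \eqref{eq:rootconfiguration}, which makes the signs of $\dotprod{\Root{I}{\ell}}{\coRoot{I}{i}}$ and $\dotprod{\Root{I}{i}}{\coRoot{I}{k}}$ well defined and ties them to whether a position falls inside the flipped interval. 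A final, more routine point is the re-indexing of rows and columns when position $i$ is replaced by $j$, so that the entries indexed by the new almost positive root $\dvec{u_j(I')}$ are matched with the mutated row and column.
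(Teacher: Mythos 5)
Your strategy is in fact the ``official'' one: this paper does not reprove \Cref{thm:exchangematrix} at all (it is quoted from \cite[Theorem~6.40]{PS2015} and stated without proof), but the proof given there --- and mirrored in this paper's proof of the companion result \Cref{thm:extendedpart}, which the authors say ``follows the same lines'' --- is precisely your induction: check the formula at the greedy facet (the analogue of your base case is \Cref{prop:extendedpartinitial}), then show that under a flip $I\setminus i = J\setminus j$ the right-hand side transforms by the matrix mutation rule \eqref{it:matrixmutation} (the analogue of your inductive step is \Cref{prop:exchangeRelationslemma}), using \Cref{lem:lemma33} for the update of the root and coroot functions and the $W$-invariance of the pairing. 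Your base-case computation is correct, your identification of the correction term $-\bigdotprod{\Root{I}{\ell}}{\coRoot{I}{i}}\,\bigdotprod{\Root{I}{i}}{\coRoot{I}{k}}$ with the quadratic term of the mutation rule is correct, and $\Root{J}{j}=-\Root{I}{i}$ does account for the sign change of the mutated row and column.

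The one genuine gap is in what you call the ``key geometric input''. Sign-coherence of the root configuration --- each $\Root{I}{k}\in\Phi$ being a positive or a negative root --- is not sufficient to run your case analysis. What is actually needed is a correlation between interval membership and the sign of the pairing: for an increasing flip at $i$ (so $\Root{I}{i}\in\Phiplus$), one must know that $[\Mpr(I)]_{ik}$ is nonnegative for $k\in I$ strictly inside the flipped interval and nonpositive for $k\in I$ outside it. This is exactly what forces the mutation correction $\tfrac12\big(|b_{ki}|b_{i\ell}+b_{ki}|b_{i\ell}|\big)$ to vanish when zero or two of the positions $k,\ell$ are reflected, and to agree in sign with the reflection correction when exactly one is. That statement is a separate, nontrivial lemma, namely \cite[Lemma~6.43]{PS2015}, and it is precisely what this paper invokes at the corresponding point in the proof of \Cref{prop:exchangeRelationslemma}; without it (or an equivalent statement with proof), your sign bookkeeping cannot be completed. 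Two minor points: you may assume the flip is increasing without loss of generality, since every facet is reachable from $\greedy$ by increasing flips, which halves the case analysis; and your inductive step tacitly uses that adjacent facets of $\clusterComplex$ carry seeds related by mutation at the exchanged variable --- legitimate, but worth stating explicitly, as it is part of the content of the isomorphism in \Cref{thm:cluster1}.
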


Observe that one could directly express this as well in terms of the skew-sym\-metric bilinear form defined by \Cref{eq:bmatrix}.

\medskip

The following remark starts to clarify the connection between the subword complex approach to finite type cluster algebras and the approach using N.~Reading and D.~Speyer's combinatorial frameworks~\cite{RS2016}.

\begin{remark}\label{rem:RS1}
  The central structures in their combinatorial frameworks are the \emph{labels} and \emph{colabels}.
  It follows from~\cite[Proposition~6.20]{PS2015} that the labels in finite types are the root configurations defined in~\eqref{eq:rootconfiguration}, and we obtain by duality that the colabels in finite types are the coroot configurations also defined in~\eqref{eq:rootconfiguration}.
  Given this connection in finite types, we immediately obtain that \Cref{thm:exchangematrix} is the same description of the principal part of the exchange matrix as given in~\cite[Theorem~3.25]{RS2016}.
  See \Cref{rem:RS2} below for the relation of the subword complex approach and~\cite[Theorem~3.26]{RS2016}.
\end{remark}

Before presenting the results of this paper, we explain them in great detail the example of type $A_2$ for the reader's convenience.

\begin{example}\label{ex:subwordA2}
  This example shows the root and the weight function of type~$A_2$, together with the construction of the~$c$-cluster complex $\clusterComplex$.
  It is presented in order to emphasize the \emph{close similarity} to the type~$A_2$ cluster algebra in \Cref{ex:clusterA2}.
  Let~$W$ be the symmetric group $A_2 = \mathfrak{S}_3$ with simple transpositions
  \[
    \sref=\big\{\tau_1 = (12),\tau_2 = (23)\big\},
  \]
  Coxeter element $c = \tau_1\tau_2 = (123)$, simple roots
  \[
    \Delta = \{ \alpha_1 = (1,-1,0), \alpha_2 = (0,1,-1) \},
  \]
  and fundamental weights
  \[
    \nabla = \{ \omega_1 = (1,0,0), \omega_2 = (1,1,0) \}.
  \]
  As the space is given by $\RR^3 \big/ \RR(1,1,1)$, we have that
  \[
    \alpha_1 = 2\omega_1-\omega_2, \quad \alpha_2 = 2\omega_2-\omega_1,
  \]
  compare~\eqref{eq:rootweight}.
  The word $\Q=\cw{c}$ is given by
  \[
    \q_1\q_2\ \q_3\q_4\q_5=\underbrace{\tau_1\tau_2}_{\c}\ \underbrace{\tau_1\tau_2\tau_1}_{\sw{w_{\circ}}{c}},
  \]
  and the facets of $\clusterComplex$ are
  \[
    \{1,2\},\{2,3\},\{3,4\},\{4,5\},\{1,5\}.
  \]
  The following table records the root function of $\clusterComplex$ indexed both by almost positive roots and positions in the word $\Q$:
  \[
    \begin{array}{|rc|C{40pt}|C{40pt}|C{40pt}|C{40pt}|C{40pt}|}\hline
          && -\alpha_1 & -\alpha_2 & \alpha_1 & \alpha_1+\alpha_2 & \alpha_2 \\
     & I & 1 & 2 & 3 & 4 & 5 \\\hline\hline

      \multirow{2}{*}{$\greedy =$\hspace*{-5pt}} & \multirow{2}{*}{$\{1,2\}$} & \lightgrey \alpha_1 & \lightgrey \alpha_2 & \alpha_1 & \alpha_1+\alpha_2 & \alpha_2 \\
      && \lightgrey (1,-1,0) & \lightgrey (0,1,-1) & (1,-1,0) & (1,0,-1) & (0,1,-1) \\\hline
      &\multirow{2}{*}{$\{2,3\}$}  & \alpha_1 & \lightgrey \alpha_1+\alpha_2 & \lightgrey -\alpha_1 & \alpha_1+\alpha_2 & \alpha_2 \\
      && (1,-1,0) & \lightgrey (1,0,-1) & \lightgrey (-1,1,0) & (1,0,-1) & (0,1,-1) \\\hline
      &\multirow{2}{*}{$\{3,4\}$}  & \alpha_1 & \alpha_1+\alpha_2 & \lightgrey \alpha_2 & \lightgrey -\alpha_1-\alpha_2 & \alpha_2 \\
      && (1,-1,0) & (1,0,-1) & \lightgrey (0,1,-1) & \lightgrey (-1,0,1) & (0,1,-1) \\\hline
      \multirow{2}{*}{$\antigreedy =$}\hspace*{-5pt} &\multirow{ 2}{*}{$\{4,5\}$} & \alpha_1 & \alpha_1+\alpha_2 & \alpha_2 & \lightgrey -\alpha_1 & \lightgrey -\alpha_2 \\
      && (1,-1,0) & (1,0,-1) & (0,1,-1) & \lightgrey (-1,1,0) & \lightgrey (0,-1,1) \\\hline
      &\multirow{2}{*}{$\{1,5\}$}  & \lightgrey \alpha_1 & \alpha_2 & \alpha_1+\alpha_2 & \alpha_1 & \lightgrey -\alpha_2 \\
      && \lightgrey (1,-1,0) & (0,1,-1) & (1,0,-1) & (1,-1,0) & \lightgrey (0,-1,1) \\\hline
    \end{array}
  \]
  Observe that the root configuration of a facet~$I$ (indicated in grey) written in simple roots coincides with the $c$-vectors of the corresponding cluster seed in \Cref{ex:clusterA2}.
  E.g., the facet~$\{3,4\}$ corresponds to the cluster seed where the $d$-vectors are the almost positive roots $\big(\alpha_1,\alpha_1+\alpha_2\big)$.
  It has root configuration $\big(\alpha_2,-\alpha_1-\alpha_2\big)$ which corresponds to the two $c$-vectors $\cvec{\alpha_1} = (0,1) = \alpha_2$ and $\cvec{\alpha_1+\alpha_2} = (-1,-1) = -\alpha_1-\alpha_2$.
  This phenomenon will be explained in all finite types in \Cref{thm:extendedpart}.

  \medskip

  Similarly, the following table records the weight function of $\clusterComplex$:
  \[
    \begin{array}{|rc|C{33pt}|C{33pt}|C{33pt}|C{33pt}|C{33pt}||c|}\hline
          && -\alpha_1 & -\alpha_2 & \alpha_1 & \alpha_1+\alpha_2 & \alpha_2 & \\
      & I & 1 & 2 & 3 & 4 & 5 & \brickVector(I) \\\hline\hline

      \multirow{2}{*}{$\greedy =$\hspace*{-5pt}} & \multirow{2}{*}{$\{1,2\}$} & \lightgrey \omega_1 & \lightgrey \omega_2 & \omega_1 & \omega_2 & \omega_2-\omega_1 & \omega_1+3\omega_2\\
      & & \lightgrey (1,0,0) & \lightgrey (1,1,0) & (1,0,0) & (1,1,0) & (0,1,0) & (4,3,0) \\
      \hline

      &\multirow{2}{*}{$\{2,3\}$} & \omega_1 & \lightgrey \omega_2 & \lightgrey \omega_2-\omega_1 & \omega_2 & \omega_2-\omega_1 & -\omega_1+4\omega_2 \\
      & & (1,0,0) & \lightgrey (1,1,0) & \lightgrey (0,1,0) & (1,1,0) & (0,1,0) & (3,4,0) \\
      \hline

      &\multirow{2}{*}{$\{3,4\}$} & \omega_1 & \omega_2 & \lightgrey \omega_2-\omega_1 & \lightgrey -\omega_1 & \omega_2-\omega_1 & -2\omega_1+3\omega_2\\
      & & (1,0,0) & (1,1,0) & \lightgrey (0,1,0) & \lightgrey (0,1,1) & (0,1,0) & (2,4,1) \\
      \hline

      \multirow{2}{*}{$\antigreedy =$}\hspace*{-5pt} &\multirow{ 2}{*}{$\{4,5\}$} & \omega_1 & \omega_2 & \omega_2-\omega_1 & \lightgrey -\omega_1 & \lightgrey -\omega_2 & -\omega_1+\omega_2 \\
      & & (1,0,0) & (1,1,0) & (0,1,0) & \lightgrey (0,1,1) & \lightgrey (0,0,1) & (2,3,2) \\
      \hline

      &\multirow{2}{*}{$\{1,5\}$} & \lightgrey \omega_1 & \omega_2 & \omega_1 & \omega_1-\omega_2 & \lightgrey -\omega_2 & 3\omega_1-\omega_2\\
      & & \lightgrey (1,0,0) & (1,1,0) & (1,0,0) & (1,0,1) & \lightgrey (0,0,1) & (4,1,2) \\
      \hline
    \end{array}
  \]
  This yields that the brick polytope is given by
  \begin{align*}
    \brickPolytope\big(\cw{c}\big) =& \conv\big\{ 430, 340, 241, 232, 412 \big\} \\
                           =& \conv\{\omega_1\} + \conv\{\omega_2\} + \conv\{\omega_1,\omega_2-\omega_1\} \\
                            &+ \conv\{\omega_2,-\omega_1,\omega_1-\omega_2\} + \conv\{\omega_2-\omega_1,-\omega_2\}.
  \end{align*}

  There are multiple things to be observed in this table which will be conjectured/explained in this paper.
  First, the weight configuration~$I$ (again indicated in grey) written in fundamental weights coincides with the $g$-vectors of the corresponding cluster seed in \Cref{ex:clusterA2}.
  E.g., the facet $\{3,4\}$ has weight configuration $(\omega_2-\omega_1,-\omega_1)$ which corresponds to the two $g$-vectors $\gvec{\alpha_1} = (-1,1) = \omega_2-\omega_1$ and $\gvec{\alpha_1+\alpha_2} = (-1,0) = -\omega_1$.
  This phenomenon will be explained in all finite types in \Cref{cor:extendedpart}.
  Moreover, the weights inside a column are all equal within the entries inside the facets (the entries in grey) and these weights also coincide with the weight in the row of the antigreedy facet.
  This will be explained in \Cref{lem:weightequality} and the following paragraph.

  Next, and most importantly, one shifts all weights inside a column by the weight in the row of the antigreedy facet~$\antigreedy$ and expresses the result in terms of the simple roots to obtain in each column the exponent vectors of the monomials in the $F$-polynomials for the corresponding cluster variable:
  \[
    \begin{array}{|rc|c|c|c|c|c||c|}\hline
          && -\alpha_1 & -\alpha_2 & \alpha_1 & \alpha_1+\alpha_2 & \alpha_2 & \brickVector(I) - \brickVector(\antigreedy) \\\hline\hline

      \greedy =\hspace*{-5pt} &\{1,2\} & \lightgrey 0 & \lightgrey 0 & \alpha_1 & \alpha_1+\alpha_2 & \alpha_2 & 2\alpha_1+2\alpha_2 \\\hline
      &\{2,3\} & 0 & \lightgrey 0 & \lightgrey 0 & \alpha_1+\alpha_2 & \alpha_2 & \alpha_1+2\alpha_2 \\\hline
      &\{3,4\} & 0 & 0 & \lightgrey 0 & \lightgrey 0 & \alpha_2 & \alpha_2 \\\hline
      \antigreedy =\hspace*{-5pt} &\{4,5\} & 0 & 0 & 0 & \lightgrey 0 & \lightgrey 0 & 0
      \\\hline
      &\{1,5\} & \lightgrey 0 & 0 & \alpha_1 & \alpha_1 & \lightgrey 0 & 2\alpha_1 \\\hline\hline
      &              & 1 & 1 & 1    & 1       & 1   \\
      & \Fpoly{\beta}{\y}  &   &   & y_1  & y_1     & y_2 \\
      &              &   &   &      &  y_1y_2 &     \\
      \cline{1-7}
    \end{array}
  \]
  We will prove this phenomenon in type~$A_n$, while we will only conjecture generalizations thereof in general finite types which we verify in low ranks including all exceptional types.

  Nevertheless, the following properties of the columns perfectly match known properties of $F$-polynomials and hold for general finite type~$c$-cluster complexes:
  \begin{enumerate}[(i)]
    \item When shifting all weights inside the columns by the entries of the antigreedy facet~$\antigreedy$, all entries inside the facets become~$0$ and the row of the greedy facet~$\greedy$ coincides with the row of~$\greedy$ for the table of the root function in the positions corresponding to the positive roots (while the simple negative roots become~$0$ in this table), see \Cref{lem:rootlemma}.
    \item Every other entry is obtained from the entry of the greedy facet $\greedy$ (the antigreedy facet $\antigreedy$) by subtracting (adding) simple roots, see \Cref{lem:positiveweightshift}.
  \end{enumerate}
  The first item corresponds to the facts that $F$-polynomials have constant term~$1$ and a monomial with exponent vector equal to the $d$-vector, and the second item corresponds to the fact that this monomial is the unique monomial of highest degree and is divided by every other monomial in the $F$-polynomial.
\end{example}

\medskip

The first result of this paper shows the close relationship between the $c$-vectors in finite type cluster algebras and the root function of the corresponding subword complex.

\begin{theorem}\label{thm:extendedpart}
  Let~$I$ be a facet of the~$c$-cluster complex $\clusterComplex$ corresponding to the seed~$S(I)$ in the cluster algebra~$\Alg(W,c)$.
  Then the columns of $\Mex(I)$ are given by the root configuration, \ie,
  \[
    \cvec{I,i} = \Root{I}{i}
  \]
  for all $i \in I$.
  In particular, $\cvec{I,i} \in \Phi$ is sign-coherent and $\set{\cvec{I,i}}{i \in I}$ forms a lattice basis of the root lattice.
\end{theorem}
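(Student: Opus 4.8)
The plan is to induct along the flip graph of $\clusterComplex$, which by \Cref{thm:cluster1} is identified with the (connected) exchange graph of $\Alg(W,c)$, comparing the mutation of the extended part $\Mex$ with the update rule for the root function recalled after \Cref{lem:commutation} and made precise in \Cref{lem:lemma33}. It therefore suffices to verify $\cvec{I,i} = \Root{I}{i}$ for one initial seed and to show that this identity is preserved under a single flip. For the base case I would take the greedy facet $\greedy = \{1,\ldots,n\}$, whose seed is the initial seed $\big(\mutmatrix_c,\x,\y\big)$ with $\Mex(\greedy) = \id_n$, so that $\cvec{\greedy,i} = \alpha_i$. On the other side, for $i \in \greedy$ one has $[i-1]\ssm\greedy = \emptyset$, hence $\wordprod{\Q}{[i-1]\ssm\greedy}$ is the identity and $\Root{\greedy}{i} = \alpha_{q_i} = \alpha_i$, matching.

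\textbf{Matching the two update rules.} Assume $\cvec{I,b} = \Root{I}{b}$ for all $b\in I$ and consider the flip exchanging $i\in I$ with $j\notin I$, giving $I' = (I\ssm\{i\})\cup\{j\}$. On the cluster side, mutating $\mutmatrix(I)$ in the column indexed by $i$ negates $\cvec{I,i}$ and, using sign-coherence of $c$-vectors together with the matrix mutation rule, changes every other column by a nonnegative multiple of $\cvec{I,i}$:
\[
  \cvec{I',j} = -\cvec{I,i}, \qquad \cvec{I',b} = \cvec{I,b} + \big[\epsilon\,[\Mpr(I)]_{ib}\big]_+\,\cvec{I,i}\quad (b\in I\ssm\{i\}),
\]
where $\epsilon\in\{\pm1\}$ is the common sign of the entries of $\cvec{I,i}$. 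On the subword side, \Cref{lem:lemma33} updates the root function by applying the reflection $s_{\Root{I}{i}}$ to the positions in the flip interval and fixing the rest, so that $\Root{I'}{j} = -\Root{I}{i}$ and, for $b\in I\ssm\{i\}$, either $\Root{I'}{b} = \Root{I}{b}$ or
\[
  \Root{I'}{b} = s_{\Root{I}{i}}\big(\Root{I}{b}\big) = \Root{I}{b} - \bigdotprod{\Root{I}{b}}{\coRoot{I}{i}}\Root{I}{i}.
\]
I would then substitute the description of $[\Mpr(I)]_{ib}$ from \Cref{thm:exchangematrix}, which turns the reflection coefficient $\bigdotprod{\Root{I}{b}}{\coRoot{I}{i}}$ into $\pm[\Mpr(I)]_{ib}$, so that the two rules coincide provided $\big[\epsilon\,[\Mpr(I)]_{ib}\big]_+$ equals $-\bigdotprod{\Root{I}{b}}{\coRoot{I}{i}}$ exactly for those $b$ in the flip interval and equals $0$ for the remaining $b$.

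\textbf{Main obstacle and conclusion.} The hard part will be precisely this sign-and-support matching: one must identify the sign-coherence sign $\epsilon$ with the sign of the root $\Root{I}{i}$ (equivalently, with whether the flip at $i$ is increasing or decreasing), and then verify that the truncation $[\,\cdot\,]_+$ annihilates exactly the columns $b$ whose root is fixed by $s_{\Root{I}{i}}$, namely those outside the flip interval, where $\bigdotprod{\Root{I}{b}}{\coRoot{I}{i}} = 0$. This is where the combinatorics of the flip interval in \Cref{lem:lemma33} must be reconciled with the sign-coherence input, and it is the step requiring genuine care. Granting it, induction gives $\cvec{I,i} = \Root{I}{i}$ for every facet $I$, and the ``in particular'' assertions follow at once: each $\Root{I}{i}\in W(\Delta) = \Phi$, and since $\Phi\subseteq L^+\sqcup L^-$ every $\cvec{I,i}$ is sign-coherent. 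Finally, the change-of-basis matrix expressing $\Roots{I'}$ in terms of $\Roots{I}$ has column $i$ equal to $-e_i$ and every other column of the form $e_b + (\text{integer})\,e_i$, hence is unimodular; starting from the simple-root basis $\Roots{\greedy} = \Delta$, induction then shows that $\set{\cvec{I,i}}{i\in I}$ is a $\ZZ$-basis of the root lattice $L$.
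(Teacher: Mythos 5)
Your overall strategy is the same as the paper's: verify the identity at the greedy facet (where both sides reduce to $\delta_{i=j}$), then induct along flips, matching the mutation rule for $\Mex$ against the root-function update of \Cref{lem:lemma33} by means of \Cref{thm:exchangematrix}; this is exactly what the paper does in \Cref{prop:extendedpartinitial,prop:exchangeRelationslemma}. However, the step you label the ``main obstacle'' and then explicitly grant is precisely where the paper's proof does its real work, and the route you sketch for closing it rests on a false claim. You assert that the truncation $[\,\cdot\,]_+$ should annihilate exactly the columns $b$ ``whose root is fixed by $s_{\Root{I}{i}}$, namely those outside the flip interval, where $\bigdotprod{\Root{I}{b}}{\coRoot{I}{i}} = 0$.'' That is not what happens: \Cref{lem:lemma33}\eqref{it:rootupdate} leaves the root function unchanged outside the flip interval because the reflection is simply \emph{not applied} there, not because those roots are orthogonal to $\coRoot{I}{i}$. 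In general $\bigdotprod{\Root{I}{b}}{\coRoot{I}{i}} = \pm[\Mpr(I)]_{ib}$ is nonzero for $b \in I$ outside the interval; indeed, in the paper's proof this is exactly the case where $[\Mpr(I)]_{ib}$ may be strictly negative. So your support-matching criterion cannot be verified, because it is false.

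What is actually needed is a sign statement, not a vanishing statement. After reducing to increasing flips $i<j$ (every facet is reachable from $\greedy$ by increasing flips, whence $\Root{I}{i} \in \Phiplus$ and your $\epsilon = +1$), one needs that $[\Mpr(I)]_{ib} \geq 0$ for $b$ inside the flip interval and $[\Mpr(I)]_{ib} \leq 0$ for $b \in I$ outside it; only then does the truncation reproduce the reflection inside the interval and the identity outside it. This is the content of~\cite[Lemma~6.43]{PS2015}, which the paper cites and which you neither cite nor prove; without it (or a substitute, such as the paper's explicit word-product computation for positions inside the interval), your induction step remains open. A smaller point: your appeal to ``sign-coherence of $c$-vectors'' must be read as coming from the induction hypothesis $\cvec{I,b} = \Root{I}{b} \in \Phi$ (roots are sign-coherent), not as an external input, or the argument becomes circular with the theorem's ``in particular'' clause; read that way, your mutation formula is correct. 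The base case, the treatment of the flipped column $\cvec{I',j} = -\cvec{I,i}$, and the unimodular change-of-basis argument for the lattice-basis claim are all fine.
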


To emphasize the similarity of this result with \Cref{thm:exchangematrix}, we rewrite this result in terms of the frozen variables and the extended part of the mutation matrix as a \emph{matrix of scalars} and obtain $\f_i(I) = y_1^{\Root{I}{i}_1} \cdots y_n^{\Root{I}{i}_n}$, which means that the extended part of the exchange matrix $\mutmatrix(I)$ is given for $i \in \greedy$ and $j \in I$ by
\begin{align*}
  [\Mex(I)]_{ij} = \bigdotprod{\Root{I}{j}}{\coWeight{\greedy}{i}}.
\end{align*}
This explains why we think of the coluns of $\Mex(I)$ as being indexed by the almost positive roots in a facet, while we think of the rows as being indexed by the simple roots.

\begin{corollary}\label{cor:extendedpart}
  In the situation of \Cref{thm:extendedpart}, we also obtain that the $g$-vectors are given by the weight configuration,
  \[
    \gvec{I,i} = \Weight{I}{i}
  \]
  for all~$i \in I$.
  In particular, $\set{\gvec{I,i}}{i \in I}$ forms a lattice basis of the weight lattice.
\end{corollary}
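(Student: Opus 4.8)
The plan is to derive the corollary from \Cref{thm:extendedpart} together with the tropical duality between $c$- and $g$-vectors. For a cluster algebra with principal coefficients, the theorem of Nakanishi--Zelevinsky states that inside any seed the matrix of $g$-vectors and the matrix of $c$-vectors are mutually inverse-transpose; in the present root-theoretic language this says precisely that, for the seed $S(I)$, the $g$-vectors form the basis of the weight lattice dual to the coroots of the $c$-vectors, that is $\dotprod{\gvec{I,i}}{\cvec{I,j}^\vee} = \delta_{i=j}$ for all $i,j \in I$ (in the simply-laced case this is the statement $G^\top C = \mathrm{Id}$, read through the perfect pairing $\dotprod{\omega_s}{\alpha_t^\vee}=\delta_{s=t}$; in general the symmetrizer is absorbed into the root/coroot distinction). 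In particular the $g$-vectors of $S(I)$ are uniquely characterized as the dual basis of $\{\cvec{I,j}^\vee : j \in I\}$.

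By \Cref{thm:extendedpart} we have $\cvec{I,j} = \Root{I}{j}$, and since taking coroots is $W$-equivariant, $\cvec{I,j}^\vee = (\Root{I}{j})^\vee = \coRoot{I}{j}$ is exactly the coroot configuration of $I$. It therefore suffices to prove the purely combinatorial duality
\[
  \dotprod{\Weight{I}{i}}{\coRoot{I}{j}} = \delta_{i=j} \qquad (i,j \in I),
\]
since then uniqueness of dual bases forces $\gvec{I,i} = \Weight{I}{i}$ for all $i \in I$.

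Writing $w_{I,k} = \wordprod{\Q}{[k-1]\ssm I}$, the diagonal case is immediate from $W$-invariance of the pairing: $\dotprod{\Weight{I}{i}}{\coRoot{I}{i}} = \dotprod{w_{I,i}(\omega_{q_i})}{w_{I,i}(\alpha_{q_i}^\vee)} = \dotprod{\omega_{q_i}}{\alpha_{q_i}^\vee} = 1$. For $i \neq j$, say $i<j$, the fact that $i \in I$ yields $w_{I,j} = w_{I,i}\, \wordprod{\Q}{Y}$ with $Y = \{x : i<x<j,\ x \notin I\}$, so that
\[
  \dotprod{\Weight{I}{i}}{\coRoot{I}{j}} = \dotprod{\omega_{q_i}}{\wordprod{\Q}{Y}(\alpha_{q_j}^\vee)},
\]
which is the coefficient of the simple coroot $\alpha_{q_i}^\vee$ in $\wordprod{\Q}{Y}(\alpha_{q_j}^\vee)$. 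Showing that this coefficient vanishes is the main obstacle. I would establish it from the reducedness of the complement word: since $i,j \in I$ one has the disjoint decomposition $[j]\ssm I = ([i-1]\ssm I) \sqcup Y$, so $\wordprod{\Q}{Y}$ is a reduced factor sitting inside the reduced word $\wordprod{\Q}{[m]\ssm I}$ for $\wo$; a short inversion-set analysis of this reduced factor then shows that the simple coroot indexed by the \emph{facet} position $i$ cannot appear in $\wordprod{\Q}{Y}(\alpha_{q_j}^\vee)$. This vanishing is exactly of the type underlying the pairing computations of \cite{PS2015} that yield \Cref{thm:exchangematrix}, from which it may alternatively be quoted. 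The case $i>j$ is symmetric.

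Finally, the coroot analogue of the last assertion of \Cref{thm:extendedpart} shows that $\{\coRoot{I}{i} : i \in I\}$ is a $\ZZ$-basis of the coroot lattice; its dual basis $\{\Weight{I}{i} : i \in I\} = \{\gvec{I,i} : i\in I\}$ is then a $\ZZ$-basis of the dual lattice, which is precisely the weight lattice, giving the final claim.
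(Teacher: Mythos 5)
Your proposal is correct and takes essentially the same route as the paper: both arguments combine the Nakanishi--Zelevinsky tropical duality with \Cref{thm:extendedpart} to characterize the $g$-vectors of the seed $S(I)$ as the dual basis of the coroot configuration $\coRoots{I}$, and then identify that dual basis with the weight configuration. The pairing identity $\dotprod{\Weight{I}{i}}{\coRoot{I}{j}} = \delta_{i=j}$, which you verify on the diagonal and only sketch off the diagonal via an inversion-set analysis, is precisely what the paper quotes wholesale from \cite[Proposition~6.6]{PS2015} (the off-diagonal vanishing being the facet-position orthogonality in \Cref{lem:lemma33}\eqref{it:weightroot}), so your fallback of citing it rather than proving it is exactly the paper's own route.
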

\begin{proof}
  Given a cluster algebra $\Alg(W,c)$ as considered in \Cref{thm:extendedpart}, it was proven in \cite[Theorem 1.2]{NZ2011} that the $g$-matrix, whose columns consist of the $g$-vectors of $\Alg(W,c)$, is equal to the transpose inverse of the $c$-matrix, whose columns consist of the $c$-vectors of $\Alg(W,c)$.
  
  This fact along with \Cref{thm:extendedpart} tells us that the $g$-vectors form the dual basis to the coroot configuration, see also~\cite[Section~3.1]{RS2016}.
  The statement thus follows with the stronger version of \Cref{lem:lemma33}\eqref{it:weightroot} below which was established in~\cite[Proposition~6.6]{PS2015}.
\end{proof}

\begin{remark}\label{rem:RS2}
  We have seen in \Cref{rem:RS1} how the description of the mutation matrix through subword complexes relates to the description through N.~Reading and D.~Speyer's combinatorial frameworks.
  Indeed, \Cref{thm:extendedpart} is the subword complex counterpart of~\cite[Theorem~3.26]{RS2016}.
  \Cref{thm:extendedpart} is then the same as Theorem~3.26(1) and~(2), and \Cref{cor:extendedpart} implies Theorem~3.26(3) and~(4).
  We also remark that~\cite[Theorem~5.39]{RS2016} provides a way of computing the $g$-vectors in finite types using their combinatorial framework.
  Theorem~3.26(5) then states that all $F$-polynomials in finite type have constant term~$1$, this follows by the same argument via~\cite[Proposition~5.6]{FZ2007}.
\end{remark}

We have now seen how to obtain properties from the root and coroot configurations, and also from the weight configuration.
Indeed, we have \emph{not} used the root and coroot functions outside of the facets to derive information.
This does not seem very surprising in light of \Cref{lem:lemma33}\eqref{it:bijectionpositiveroots} which recalls that the root function on the complement of a given facet is always the complete set of positive roots.

\medskip

Next, we \emph{look at properties of the cluster algebra that can be studied using the weight function}, this time outside of a given facet.
Indeed, we will see that we can conjecturally obtain further desired information about the cluster algebra from this weight function, see \Cref{conj:newtonpolytope}, \Cref{cor:dvector,cor:gvector}, and \Cref{thm:newton} below.

\medskip

To state the main conjecture of this paper, we define the \Dfn{Newton polytope} of an $F$-polynomial $\Fpoly{\beta}{\y}$ as the convex hull of its exponent vectors in the root basis~$\Delta$.
That is,
\[
  \Newton{\Fpoly{\beta}{\y}} = \conv\bigset{\lambda_1 \alpha_1+\ldots+\lambda_n\alpha_n}{y_1^{\lambda_1}\cdots y_n^{\lambda_n}\text{ monomial in }\Fpoly{\beta}{\y}}.
\]

\begin{conjecture}\label{conj:newtonpolytope}
  Let $\Fpoly{\beta}{\y}$ be the $F$-polynomial associated to the positive root~$\beta$ for the cluster algebra $\Alg(W,c)$.
  Let~$k$ be the unique index $k \in \{n+1,\ldots,n+N\}$ such that $\Root{\greedy}{k} = \beta$ associated to~$\beta$ in Equation~\eqref{eq:rootbijection}.
  Then
  \[
    \Newton{\Fpoly{\beta}{\y}} = \conv\set{\Weight{I}{k} - \Weight{\antigreedy}{k}}{I \text{ facet of }\clusterComplex}.
  \]
\end{conjecture}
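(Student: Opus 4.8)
The plan is to prove the asserted polytope equality by first rewriting its right-hand side as a translate of a single Minkowski summand of the brick polytope, and then matching the vertices of that summand with the monomials of $\Fpoly{\beta}{\y}$. By \Cref{thm:minkowski} we have $\conv\set{\Weight{I}{k}}{I \text{ facet of } \clusterComplex} = \brickPolytope\big(\cw{c},k\big)$, so the conjectured identity is equivalent to $\Newton{\Fpoly{\beta}{\y}} = \brickPolytope\big(\cw{c},k\big) - \Weight{\antigreedy}{k}$. I would work with this reformulation throughout, since the brick summand is geometrically transparent and the translation by $-\Weight{\antigreedy}{k}$ normalizes the value coming from the antigreedy facet to the origin.

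Next I would pin down the extreme monomials and the coarse shape. The constant term $1$, which every finite type $F$-polynomial possesses (\Cref{rem:RS2}), supplies the origin $\Weight{\antigreedy}{k} - \Weight{\antigreedy}{k}$, and the unique monomial of maximal degree has exponent equal to the $d$-vector $\beta$; it is realized by the greedy facet, since by \Cref{lem:rootlemma} the shifted greedy value $\Weight{\greedy}{k} - \Weight{\antigreedy}{k}$ equals $\Root{\greedy}{k}$, which is $\beta$ by the choice of $k$. More generally, \Cref{lem:positiveweightshift} shows that every shifted weight $\Weight{I}{k} - \Weight{\antigreedy}{k}$ is obtained from $0$ by adding, and from $\beta$ by subtracting, simple roots; this is the subword-complex incarnation of the classical fact (\cite{FZ2007}) that in finite type every monomial of $\Fpoly{\beta}{\y}$ divides the top monomial. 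Hence both candidate polytopes lie in the same coordinatewise box between $0$ and $\beta$ in the root basis, and share the two distinguished vertices $0$ and $\beta$.

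For the provable case, type $A_n$, I would run a single bijective argument that gives both inclusions at once. Triangulations of the $(n+3)$-gon are the facets of $\clusterComplex$, and the arc indexed by $\beta$ carries a snake graph whose perfect matchings (equivalently the $T$-paths of Schiffler~\cite{S2008} and Musiker--Schiffler~\cite{MS2010}) enumerate the monomials of $\Fpoly{\beta}{\y}$. The task is to set up a correspondence between these matchings and the facets $I$, and to verify that the height of each matching, written in the root basis, equals $\Weight{I}{k} - \Weight{\antigreedy}{k}$; the flip/update rule of \Cref{lem:lemma33}, in which the weight function is modified by the reflection $s_{\Root{I}{i}}$, is the natural engine for propagating this identity across adjacent triangulations. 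Because all the snake-graph coefficients are positive, no cancellation occurs, so the support is exactly the image of the matchings and the two convex hulls coincide; this yields \Cref{thm:main1}. The exceptional and remaining small-rank types are then settled by direct computation, giving \Cref{thm:main2}.

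The hard part will be the reverse inclusion in general crystallographic type, namely showing that every vertex $\Weight{I}{k} - \Weight{\antigreedy}{k}$ of the brick summand genuinely occurs as a monomial with nonzero coefficient, and dually that no monomial escapes the summand. The brick polytope supplies the candidate uniformly and for free, and the containment of the support can plausibly be pushed through the $F$-polynomial recursion by matching the reflection updates of \Cref{lem:lemma33} with the $\hat\y$-mutations, but proving the non-vanishing of the boundary coefficients appears to require a uniform combinatorial or representation-theoretic model for the support of $F$-polynomials that specializes to the weight function. Such a model is available only in type $A$ via snake graphs and, for type $D$, via the punctured-surface expansion; the lack of a uniform analogue elsewhere is precisely what keeps the statement at the level of a conjecture.
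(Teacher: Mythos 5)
Your overall route coincides with the paper's: you treat the statement as provable only where the paper proves it (type $A_n$ via the Schiffler and Musiker--Schiffler $T$-path model, rank at most $8$ by computation), you use the same normalization placing the antigreedy value at the origin, and your extreme-vertex analysis (constant term at $\antigreedy$, top monomial $\beta$ at $\greedy$ via \Cref{lem:rootlemma}, the coordinatewise box via \Cref{lem:positiveweightshift}) is exactly the content of \Cref{cor:dvector} and the surrounding discussion.

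However, the step you defer in type $A_n$ --- ``set up a correspondence between these matchings and the facets $I$, and verify that the height of each matching equals $\Weight{I}{k}-\Weight{\antigreedy}{k}$'' --- is the entire nontrivial content, and as literally stated (``a single bijective argument'') it would fail: the facets of $\clusterComplex$ are counted by a Catalan number while the monomials of $\Fpoly{\beta}{\y}$ are far fewer (already for $\beta=\alpha_1$ in type $A_2$ there are five facets but two monomials), and many distinct facets share the same weight $\Weight{I}{k}$. The paper's resolution is to route both sides through a third, common index set: prefixes of the Coxeter element $c$ restricted to $\{\tau_i,\ldots,\tau_j\}$. On the $F$-polynomial side, \Cref{prop:greedytpath} and \Cref{cor:TpathAn2} put the $T_c$-paths in bijection with such prefixes, the prefix $\tau_{i_1}\cdots\tau_{i_m}$ contributing the monomial $y_{i_1}\cdots y_{i_m}$; on the subword side, \Cref{prop:weightsAn}, \Cref{thm:propweights}, and \Cref{cor:intervalequality} show that the set of all weights $\Weight{I}{k}$ over all facets equals the interval $\IntervalWeight{k}$ and is precisely the set of weights reached from $\antigreedy$ by flipping a prefix in reverse order. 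This yields equality of the two \emph{sets} of lattice points, not merely of their convex hulls, which is also what makes \Cref{conj:latticepoints} immediate in type $A_n$ since all exponent vectors are $(0,1)$-vectors. Your flip-propagation idea via \Cref{lem:lemma33} is the right engine --- it is what drives \Cref{prop:weightsAn} and \Cref{thm:propweights} --- but without the prefix parametrization (or an equivalent device) the correspondence you posit is never defined, so the proposal stops one decisive step short. A smaller point: positivity of snake-graph coefficients is not where any difficulty lies; nothing is ever subtracted, and in type $A_n$ all coefficients equal $1$ --- the issue is purely the set equality between exponent vectors and shifted weights.
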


We moreover conjecture that knowing the Newton polytope of an $F$-polynomial $\Fpoly{\beta}{\y}$ in a finite type cluster algebra is enough to recover all monomials in $\Fpoly{\beta}{\y}$.

\begin{conjecture}\label{conj:latticepoints}
  The exponent vectors of the monomials of the $F$-polynomial $\Fpoly{\beta}{\y}$ are given by
  all lattice points inside its Newton polytope,
  \[
    \bigset{\lambda_1 \alpha_1+\ldots+\lambda_n\alpha_n}{y_1^{\lambda_1}\cdots y_n^{\lambda_n}\text{ monomial in }\Fpoly{\beta}{\y}} = \Newton{\Fpoly{\beta}{\y}} \cap L^+.
  \]
\end{conjecture}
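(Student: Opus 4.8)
The plan is to separate \Cref{conj:latticepoints} into its two inclusions. One direction is immediate from the definitions: every exponent vector of a monomial of $\Fpoly{\beta}{\y}$ is by construction one of the points whose convex hull is $\Newton{\Fpoly{\beta}{\y}}$, hence lies in $\Newton{\Fpoly{\beta}{\y}} \cap L$; moreover all these exponents are nonnegative combinations of simple roots, so $\Newton{\Fpoly{\beta}{\y}} \subseteq \RR_{\geq 0}\Delta$ and therefore $\Newton{\Fpoly{\beta}{\y}} \cap L = \Newton{\Fpoly{\beta}{\y}} \cap L^+$. Thus the whole content is the reverse inclusion, the \emph{saturation} statement: every lattice point of the Newton polytope is the exponent of a monomial of $\Fpoly{\beta}{\y}$ with nonzero coefficient. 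This is logically independent of \Cref{conj:newtonpolytope}, but a proof of the latter is convenient since it identifies $\Newton{\Fpoly{\beta}{\y}}$ with the translate $\brickPolytope(\cw{c},k) - \Weight{\antigreedy}{k}$ of the $k$-th Minkowski summand of \Cref{thm:minkowski}, so that the candidate lattice points are read off directly from the weight function.

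In type $A_n$ I would prove saturation through an explicit model for the monomials. Using Schiffler's $T$-path expansion \cite{S2008} or, equivalently, the perfect-matching expansion of Musiker--Schiffler--Williams \cite{MSW2011}, one writes $\Fpoly{\beta}{\y} = \sum_P \y^{h(P)}$, where $P$ runs over the perfect matchings of the snake graph $\mathcal{G}_\beta$ of the diagonal indexed by $\beta$ and $h(P)$ is the associated height vector. The structural inputs are that the matchings of a snake graph form a distributive lattice with unique minimal and maximal elements, that the minimal matching contributes the constant term and the maximal one the monomial whose exponent is the $d$-vector $\beta$, and that each covering relation is a single-tile twist changing $h(P)$ by exactly one simple root. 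Consequently the set of height vectors is connected under the moves $\pm\alpha_i$ and interpolates between $0$ and the top exponent; matched against item~(ii) of \Cref{ex:subwordA2} (the top monomial is the unique monomial of maximal degree and is divisible by every other one), this forces the support to be an order-ideal-type subset of the coordinate box it spans.

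To upgrade ``connected and bounded'' to genuine saturation I would make the lattice explicit: the matchings of $\mathcal{G}_\beta$ are in bijection with the order ideals of the fence (zigzag) poset $P_\beta$ read off from the snake graph, and under this bijection the height map (expressed in the root basis) identifies the support of $\Fpoly{\beta}{\y}$ with the $0/1$-points of the order polytope of $P_\beta$. Since the lattice points of an order polytope are exactly the characteristic vectors of its order ideals and are all vertices, this simultaneously yields \Cref{conj:newtonpolytope} in type $A_n$, the absence of interior lattice points, and the fact that every lattice point is attained with coefficient one, giving \Cref{conj:latticepoints}; it also explains the caution attached to the $A_2$ example. The same strategy should apply in type $D_n$ via the punctured-disk model and its matching formula for tagged arcs, the only extra complication being the bookkeeping at the puncture.

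The hard part, and the reason the statement is only a conjecture, is that there is no uniform combinatorial model for $F$-polynomials outside the surface types: in types $E$, $F$ and $G$ there is no disk or orbifold realization to supply a snake graph, so the distributive-lattice argument has no analogue and, genuinely, interior lattice points and coefficients exceeding one do occur. A type-independent proof would require either a new root-theoretic construction of the monomials of $\Fpoly{\beta}{\y}$ directly from the weight function of $\clusterComplex$ (which \Cref{conj:newtonpolytope} currently proposes only for the vertices), or a representation-theoretic input identifying each coefficient of $\Fpoly{\beta}{\y}$ with the Euler characteristic of a quiver Grassmannian together with a proof that these are nonzero on precisely the lattice points of the Newton polytope. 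Both routes are open, which is why in the remaining finite types one can presently only verify the conjecture by direct computation in small ranks, as is done here for all exceptional types.
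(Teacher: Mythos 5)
Your treatment of the statement itself is correct, and for the only part that currently admits a proof it is in substance the paper's own argument. The paper likewise establishes \Cref{conj:latticepoints} only in type $A_n$ (\Cref{thm:main1}) and by computer verification in rank at most~$8$ (\Cref{thm:main2}), and its type-$A_n$ proof also runs through the Musiker--Schiffler surface model. The difference is packaging: the paper uses $T$-paths and shows (\Cref{prop:greedytpath}, \Cref{cor:TpathAn2}) that the monomials of $\Fpoly{\beta}{\y}$ are indexed by the prefixes of~$c$ restricted to $\{\tau_i,\ldots,\tau_j\}$, while you use perfect matchings of the snake graph and order ideals of the fence poset. These are the same combinatorial objects---the prefixes of~$c$ restricted to an interval are exactly the order ideals of the induced orientation of the path on $\tau_i,\ldots,\tau_j$---and both arguments close identically: every exponent vector is a $(0,1)$-vector, so every lattice point of $\Newton{\Fpoly{\beta}{\y}}$ is a vertex and hence an exponent vector of a monomial. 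Your order-polytope phrasing has the modest extra benefit of handing you the facet inequalities of the Newton polytope, which the paper does not record.

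The one claim I would push back on is that your identification ``simultaneously yields \Cref{conj:newtonpolytope} in type $A_n$.'' That conjecture asserts
\[
  \Newton{\Fpoly{\beta}{\y}} = \conv\set{\Weight{I}{k} - \Weight{\antigreedy}{k}}{I \text{ facet of }\clusterComplex},
\]
a statement about the weight function of the subword complex $\clusterComplex$, and nothing in the snake-graph/order-ideal picture mentions the weights $\Weight{I}{k}$. Building that bridge is precisely the bulk of the paper's proof of \Cref{thm:main1}: \Cref{prop:weightsAn}, \Cref{thm:propweights} and \Cref{cor:intervalequality} identify the weight differences $\Weight{I}{k}-\Weight{\antigreedy}{k}$ with the indicator vectors of those same prefixes, and only then does \Cref{conj:newtonpolytope} follow. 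Since \Cref{conj:latticepoints} is, as you correctly note, logically independent of \Cref{conj:newtonpolytope}, this overclaim does not invalidate your proof of the statement under review; it only means your argument does not subsume the paper's, which proves both conjectures at once.
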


We emphasize that we do currently not have an explicit conjecture what the coefficients of the monomials look like---it is planned to investigate this in future research.

\medskip

\begin{theorem}\label{thm:main1}
  \Cref{conj:newtonpolytope,conj:latticepoints} hold for~$\Alg(W,c)$ with~$W$ of type~$A_n$.
\end{theorem}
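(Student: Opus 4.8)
The plan is to transport both sides of \Cref{conj:newtonpolytope} into the standard polygon model of the type~$A_n$ cluster algebra, show that the two finite sets of lattice vectors coincide there, and read off \Cref{conj:latticepoints} from the same picture. First I would fix the identification of $W = \mathfrak{S}_{n+1}$ with the triangulations of a convex $(n+3)$-gon, under which cluster variables are diagonals, facets of $\clusterComplex$ are triangulations (\Cref{thm:cluster1}), and the initial seed is the triangulation $T_c$ determined by~$c$; by \Cref{lem:commutation} I am free to choose a convenient reduced word for~$c$ to make $T_c$ explicit. In this model the $d$-vector of a diagonal records its crossing numbers with the diagonals of~$T_c$, and since two diagonals of a polygon meet at most once, every $d$-vector is a $0/1$-vector. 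Recalling that the top-degree monomial of $\Fpoly{\beta}{\y}$ has exponent~$\beta$ and is divisible by every other monomial, all exponent vectors are then $0/1$-vectors bounded above by~$\beta$, so $\Newton{\Fpoly{\beta}{\y}}$ is a $0/1$-polytope contained in~$[0,\beta]$, and \Cref{conj:latticepoints} asks exactly that the monomial exponents contain every $0/1$-point of their convex hull.

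For the $F$-polynomial side I would use Schiffler's $T$-path expansion~\cite{S2008}, in the principal-coefficient form of~\cite{MS2010,MSW2011}. For the diagonal~$D_\beta$ this writes $\Fpoly{\beta}{\y}$ as a sum over the $T_c$-paths joining the endpoints of~$D_\beta$, equivalently over the perfect matchings of the snake graph $G_\beta$ associated to~$D_\beta$ and~$T_c$, each contributing a single $y$-monomial supported on the crossed diagonals of~$T_c$ selected by the object. The perfect matchings of a snake graph form a distributive lattice, namely the lattice $J(P_\beta)$ of order ideals of a fence poset $P_\beta$ on the tiles of~$G_\beta$ (the diagonals of~$T_c$ crossed by~$D_\beta$), and the height of a matching is precisely the indicator vector of the corresponding order ideal. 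Consequently, in type~$A_n$ every monomial occurs with coefficient~$1$ and the set of monomial exponents is exactly the set of indicator vectors of order ideals of~$P_\beta$. This identifies $\Newton{\Fpoly{\beta}{\y}}$ with the order polytope of~$P_\beta$, whose lattice points are well known to be precisely these indicator vectors, and thereby proves \Cref{conj:latticepoints}.

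It then remains to match the weight data. Fix the position~$k$ with $\Root{\greedy}{k} = \beta$ provided by~\eqref{eq:rootbijection}. Using the flip-update rule for the weight function (\Cref{lem:lemma33}), the column-constancy statement (\Cref{lem:weightequality}), and the shift lemmas (\Cref{lem:rootlemma,lem:positiveweightshift}), I would first check that each difference $\Weight{I}{k} - \Weight{\antigreedy}{k}$, written in the root basis, is a $0/1$-vector supported on the diagonals crossed by~$D_\beta$, equal to~$0$ for $I = \antigreedy$ and to~$\beta$ for $I = \greedy$. I would then read off its combinatorial meaning: as~$I$ ranges over all triangulations, $\Weight{I}{k} - \Weight{\antigreedy}{k}$ records how~$T_I$ separates the crossed diagonals of~$T_c$ relative to~$D_\beta$, and this pattern is exactly an order ideal of~$P_\beta$. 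Showing that the resulting assignment is well defined and surjective onto $J(P_\beta)$ would give
\[
  \set{\Weight{I}{k} - \Weight{\antigreedy}{k}}{I \text{ facet of } \clusterComplex} = \Newton{\Fpoly{\beta}{\y}} \cap L^+,
\]
so that the two sides of \Cref{conj:newtonpolytope}, both being the convex hull of this common set, coincide.

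The main obstacle is precisely this last identification: translating the prefix-product weight function of the subword complex into the crossing-and-resolution combinatorics of the polygon so that it matches, ideal for ideal, the fence poset arising from the snake graph. Both the flip graph of triangulations and the matching lattice of~$G_\beta$ are distributive lattices, so the cleanest route is to pin down a single poset~$P_\beta$---the diagonals of~$T_c$ crossed by~$D_\beta$, ordered along~$D_\beta$ with the zigzag orientation inherited from~$T_c$---and to prove that it simultaneously governs the matchings of~$G_\beta$ and the fibers of the map $I \mapsto \Weight{I}{k}$. The delicate point is to carry this out for an arbitrary, possibly non-bipartite, Coxeter element~$c$, for which the turns of the snake graph, and hence the order relations of~$P_\beta$, vary along~$D_\beta$; \Cref{lem:commutation} removes the dependence on the chosen reduced word, but the bookkeeping that aligns the two distributive-lattice structures is where the real work lies.
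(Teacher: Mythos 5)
Your overall route is the same as the paper's: expand $\Fpoly{\beta}{\y}$ via the Schiffler/Musiker--Schiffler $T$-path model on the triangulation $T_c$, analyse the weight function of $\clusterComplex$ at the position~$k$ with $\Root{\greedy}{k}=\beta$, and match the two sides through a common combinatorial indexing set. Indeed, your order ideals of the fence poset $P_\beta$ are literally the paper's ``prefixes of~$c$ restricted to $\{\tau_i,\ldots,\tau_j\}$'' (\Cref{cor:TpathAn2}), and your order-polytope argument for \Cref{conj:latticepoints} amounts to the same observation the paper uses, namely that a $0/1$-point in a polytope contained in the unit cube is a vertex of that polytope. So the $F$-polynomial half of your plan is sound and, modulo citing the standard distributive-lattice structure on snake-graph matchings, it does establish \Cref{conj:latticepoints} on its own.

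The gap is the other half, and you name it yourself: you never prove that
\[
  \set{\Weight{I}{k} - \Weight{\antigreedy}{k}}{I \text{ facet of } \clusterComplex}
\]
equals the set of indicator vectors of order ideals of $P_\beta$. Calling this ``the main obstacle'' and ``where the real work lies'' is an accurate diagnosis, but deferring it means \Cref{conj:newtonpolytope} is not established; a proof plan whose hardest step is acknowledged rather than executed is not a proof. This missing step is exactly the content of the paper's \Cref{lem:greedyantigreedyweight,prop:weightsAn,thm:propweights,cor:intervalequality}: one first computes $\Weight{\greedy}{k}$ and $\Weight{\antigreedy}{k}$ in coordinates (they are $0/1$-vectors differing precisely in positions $i$ and $j+1$, with the remaining entries determined by the relative order in~$c$ of consecutive letters $\tau_{\ell-1},\tau_\ell$), then shows the containment of all weight differences in the interval via \Cref{lem:positiveweightshift} together with the fact that every facet lies on an increasing flip sequence from $\greedy$ to $\antigreedy$, and finally---the surjectivity direction you skip entirely---exhibits, for each prefix (equivalently, each order ideal), an explicit facet realizing it, by flipping the letters of the prefix in reverse order starting from $\antigreedy$. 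Without that construction your map $I \mapsto \Weight{I}{k} - \Weight{\antigreedy}{k}$ is only known to land inside $J(P_\beta)$, not to hit all of it, so the two convex hulls in \Cref{conj:newtonpolytope} could a priori differ. A minor side issue: your assertion that ``the flip graph of triangulations'' is a distributive lattice is false in general (Cambrian lattices need not be distributive); what is true, and what the paper actually proves, is that the map $I \mapsto \Weight{I}{k}$ collapses the flip graph onto the distributive lattice of prefixes of~$c$ restricted to $\{\tau_i,\ldots,\tau_j\}$.
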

This theorem will be proved in \Cref{sec:proofmain1} by relating it to the combinatorial model of type~$A_n$ cluster algebras of R.~Schiffler~\cite{S2008} using its description given by G.~Musiker and R.~Schiffler in~\cite{MS2010}.

\begin{remark}
  The combinatorial model for $F$-polynomials for cluster algebras from punctured surfaces can as well be used to provide the $F$-polynomials for type~$D$ cluster complexes via the description given for example by C.~Ceballos and V.~Pilaud in~\cite{CP2015}.
  We expect that constructions similar to those we use in \Cref{sec:proofmain1} to derive \Cref{thm:main1} can as well be given to prove \Cref{conj:newtonpolytope} in type~$D$.
  We plan to also further investigate this explicit combinatorial approach.
\end{remark}

\begin{theorem}\label{thm:main2}
  \Cref{conj:newtonpolytope,conj:latticepoints} hold for~$\Alg(W,c)$ with~$W$ being of rank at most~$8$.
  In particular, they hold in all exceptional types.
\end{theorem}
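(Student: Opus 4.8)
The plan is to verify \Cref{conj:newtonpolytope,conj:latticepoints} by a finite, computer-assisted case analysis, using \Cref{thm:main1} to discharge the type~$A$ cases. First I would reduce the scope. Since cluster algebras, $F$-polynomials, and Newton polytopes all factor over a direct product of root systems, it suffices to treat irreducible~$(W,\sref)$. Among irreducible crystallographic types of rank at most~$8$, type~$A_n$ is already settled by \Cref{thm:main1}, so the remaining types are $B_n$, $C_n$, $D_n$ for $2 \le n \le 8$, together with $G_2$, $F_4$, $E_6$, $E_7$, and~$E_8$. For each such type the Coxeter element~$c$ ranges over the acyclic orientations of the Dynkin diagram, and the conjectures are genuinely $c$-dependent, so each orientation must be treated. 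However, all of the data involved (the word $\cw{c}$, the weight function, and the $F$-polynomials) transforms equivariantly under Dynkin diagram automorphisms, so it suffices to fix one representative per orbit of orientations. For a tree with~$e$ edges there are $2^e$ orientations, which bounds the work; the hardest instance, $E_8$, has trivial diagram automorphism group and thus contributes all $2^7$ cases.

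Next, for a fixed type and fixed~$c$, I would carry out two independent computations and compare their outputs. On the cluster algebra side, starting from the initial seed $\big(\mutmatrix_c,\x,\y\big)$ built from~\eqref{eq:bmatrix}, I would run the mutation procedure to exhaustion---finiteness of the type guarantees termination and a complete list of seeds---and thereby collect every cluster variable, hence every $F$-polynomial $\Fpoly{\beta}{\y}$, indexed by $\beta \in \Phiplus$ via the $d$-vector bijection. From each $\Fpoly{\beta}{\y}$ I read off its support, that is, the set of exponent vectors of its monomials in the root basis, and from this the actual polytope $\Newton{\Fpoly{\beta}{\y}}$. On the subword complex side I would build $\clusterComplex$ (isomorphic for all~$c$ by \Cref{thm:cluster1}), enumerate its facets, and evaluate $\Weight{I}{k}$ at every facet~$I$ and position~$k$; identifying $\beta$ with the unique position $k \in \{n+1,\dots,n+N\}$ with $\Root{\greedy}{k} = \beta$ as in~\eqref{eq:rootbijection}, I would form the shifted weights $\Weight{I}{k} - \Weight{\antigreedy}{k}$, express them in the root basis, and take their convex hull.

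With both sides in hand, verifying \Cref{conj:newtonpolytope} amounts to checking, for each $\beta \in \Phiplus$, the polytope equality between the convex hull of the shifted weights and the Newton polytope read off from the mutation computation. To verify \Cref{conj:latticepoints} I would additionally enumerate all lattice points of $\Newton{\Fpoly{\beta}{\y}}$ (which lies in~$L^+$ by construction, so $\Newton{\Fpoly{\beta}{\y}} \cap L^+ = \Newton{\Fpoly{\beta}{\y}} \cap L$) and confirm that this set coincides exactly with the support computed from the mutations. One inclusion---that every exponent vector is a lattice point of the Newton polytope---is immediate from the definition of $\Newton{\Fpoly{\beta}{\y}}$; the content is that no lattice point is missed.

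The main obstacle is computational scale rather than conceptual difficulty, and it is concentrated in type~$E_8$: the cluster complex has $25080$ facets, the longest element has $N = 120$ positive roots, and the $F$-polynomials acquire large supports, so both the exhaustive mutation and the lattice-point enumeration inside an $8$-dimensional polytope are expensive and must be organized carefully---for instance, reusing seeds across the mutation graph and using exact rational arithmetic throughout so that the polytope and lattice-point equality tests are not corrupted by rounding. A secondary point requiring care is bookkeeping: each $F$-polynomial must be matched to the correct position~$k$, hence to the correct column of weights, since an indexing error would silently invalidate the comparison. The type~$A_2$ computation worked out in \Cref{ex:subwordA2} serves as a template for this matching and as a sanity check on the implementation before the large ranks are attempted.
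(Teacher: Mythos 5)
Your proposal is correct and takes essentially the same route as the paper: the paper's entire proof of \Cref{thm:main2} is that the statement ``was obtained via explicit computer explorations'' (using \texttt{sage-7.2}), i.e.\ exactly the finite, exhaustive mutation-versus-subword-complex comparison you describe. Your additional reductions (irreducibility, discharging type~$A$ via \Cref{thm:main1}, quotienting by diagram automorphisms) are sensible implementation optimizations but do not change the nature of the argument.
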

\begin{proof}
  This was obtained via explicit computer explorations\footnote{The computations were performed using \texttt{sage-7.2}. The development was supported by the project ``Combinatorial and geometric structures for reflection groups and groupoids'' within the German Research Foundation priority program ``Algorithmic and experimental methods in Algebra, Geometry, and Number Theory''. A worksheet with the code and examples is available upon request.}.
\end{proof}

Whenever the first of these conjectures holds, we obtain very simple combinatorial proofs of many properties of finite type cluster algebras that were already conjectured in~\cite{FZ2007}.
Since then, all those properties were proven in acyclic finite types, but often using rather intricate machinery, while the proofs here are elementary once the needed combinatorial properties of subword complexes are established.
We refer to~\cite[Section~3.3]{RS2016} and in particular the table at the end of that section for references to proofs of these properties.

\medskip

The first two corollaries describe how to obtain properties of the $F$-polynomial from \Cref{conj:newtonpolytope}, and we then describe what is missing to obtain the actual cluster variables from the root and the weight function.

\begin{corollary}[{\cite[Conjecture~7.17]{FZ2007}}]\label{cor:dvector}
  Assume that \Cref{conj:newtonpolytope} holds for $\Alg(W,c)$.
  For any $\beta \in \Phiplus$, we then have that the $F$-polynomial $\Fpoly{\beta}{\y}$ has a unique monomial of maximal degree whose exponent vector equals~$\beta$, and such that any of its monomials divides this monomial of maximal degree.
\end{corollary}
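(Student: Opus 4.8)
The plan is to read both assertions directly off the description of $\Newton{\Fpoly{\beta}{\y}}$ furnished by \Cref{conj:newtonpolytope}, using the two structural facts about the shifted weight function recorded as \Cref{lem:rootlemma,lem:positiveweightshift}. Fix the index $k \in \{n+1,\ldots,n+N\}$ with $\Root{\greedy}{k} = \beta$, so that $\Newton{\Fpoly{\beta}{\y}}$ is the convex hull of the generating points $\Weight{I}{k} - \Weight{\antigreedy}{k}$ as $I$ ranges over the facets of $\clusterComplex$. The first step is to locate the point coming from the greedy facet: by \Cref{lem:rootlemma}, shifting column $k$ by $\Weight{\antigreedy}{k}$ turns the greedy-facet entry into the corresponding value of the root function, so that
\[
  \Weight{\greedy}{k} - \Weight{\antigreedy}{k} = \Root{\greedy}{k} = \beta.
\]
Thus $\beta$ itself occurs among the generating points of the Newton polytope.

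Next I would use \Cref{lem:positiveweightshift} to show that $\beta$ dominates every other generating point. That lemma yields, for each facet $I$, an identity $\Weight{I}{k} - \Weight{\antigreedy}{k} = \beta - \sum_i c_i^I \alpha_i$ with nonnegative integers $c_i^I$, where the combination is nonzero as soon as $I \neq \greedy$. Reading coordinates in the root basis, the $\alpha_i$-coefficient of each generating point is $\beta_i - c_i^I \le \beta_i$, and its total degree $\sum_i(\beta_i - c_i^I)$ is strictly smaller than that of $\beta$ whenever $I \neq \greedy$. Consequently $\beta$ is the unique maximizer of the total-degree functional $\mu \mapsto \sum_i \mu_i$ among the generating points, hence a vertex of $\Newton{\Fpoly{\beta}{\y}}$; since the vertices of a Newton polytope lie in the support of the polynomial, $y^\beta$ is a genuine monomial of $\Fpoly{\beta}{\y}$, and by strict maximality it is the unique monomial of maximal degree.

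Finally, for the divisibility statement, observe that because every generating point has each of its root coordinates bounded above by the corresponding coordinate of $\beta$, the same bound is preserved under taking convex combinations, so $\Newton{\Fpoly{\beta}{\y}} \subseteq \set{\mu}{0 \le \mu_i \le \beta_i \text{ for all } i}$, where the lower bound simply reflects that exponents of a polynomial are nonnegative. In particular the exponent vector $\mu$ of any monomial of $\Fpoly{\beta}{\y}$ is a lattice point of the polytope and therefore satisfies $0 \le \mu_i \le \beta_i$ for all $i$, which is exactly the condition that $y^\mu$ divides $y^\beta$. The point I expect to be the main obstacle is \emph{not} in this corollary itself---granting \Cref{conj:newtonpolytope}, the argument above is elementary---but in the two lemmas it rests upon: establishing \Cref{lem:rootlemma,lem:positiveweightshift}, namely that the shifted greedy column reproduces the root function and that every other facet decreases the column entry by a nonnegative combination of simple roots, is where the genuine combinatorial content about the weight function of the subword complex resides.
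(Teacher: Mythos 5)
Your proposal is correct and takes essentially the same approach as the paper, whose proof deduces the statement in exactly this way from \Cref{lem:rootlemma,lem:positiveweightshift}, likewise observing that only the vertices of the Newton polytope (rather than knowledge of all monomials) are needed. One harmless misstatement: the difference $\Weight{\greedy}{k}-\Weight{I}{k}$ can vanish for facets $I \neq \greedy$ (distinct facets may share the same weight in column $k$, as the type~$A_3$ table in the paper shows), but your conclusion stands because uniqueness concerns exponent vectors, and every generating point \emph{distinct from} $\beta$ does have strictly smaller total degree.
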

\begin{proof}
  This directly follows from \Cref{lem:positiveweightshift,lem:rootlemma} below.
  Observe that we do not need the information about all monomials in the $F$-polynomials to deduce the statement, but that the information about the monomials corresponding to the vertices of the Newton polytope is indeed sufficient.
\end{proof}

Using~\cite[Proposition~7.16]{FZ2007}, we can now also deduce how to compute the $g$-vector from the $F$-polynomial, and indeed from the weight function alone.

\begin{corollary}[{\cite[Conjecture~6.11]{FZ2007}}]\label{cor:gvector}
  Assume that \Cref{conj:newtonpolytope} holds for $\Alg(W,c)$.
  For any $\beta \in \Phiplus$, we then have that the $g$-vector $\gvec{\beta}$ is given by
  \[
    \gvec{\beta} = \max\big(\Fpoly{\beta}{\hat\x}\big) - \beta
  \]
  where $\hat\x = (\hat x_1,\ldots, \hat x_n)$ with $\hat x_i = \prod_j x_j^{-[\sfM_c]_{ji}}$, and $\max(\Fpoly{\beta}{\hat\x})$ denotes the componenentwise maximum of the exponent vectors of the monomials in $\Fpoly{\beta}{\hat\x}$.
  Moreover, this maximum is obtained when only considering exponent vectors of monomials that correspond to vertices of $\Newton{\Fpoly{\beta}{\y}}$.
\end{corollary}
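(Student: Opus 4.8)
The plan is to derive the displayed formula from the Fomin--Zelevinsky separation formula together with the denominator description of the $d$-vector, and then to obtain the vertex refinement by a convexity argument. By \cite[Proposition~7.16]{FZ2007} the cluster variable factors as
\[
  u(\x,\y) = \x^{\gvec{\beta}}\,\Fpoly{\beta}{\hat y_1,\ldots,\hat y_n}, \qquad \hat y_j = y_j\prod_i x_i^{[\sfM_c]_{ij}}.
\]
Setting $\y = \onevar$ and abbreviating $\tilde x_j = \prod_i x_i^{[\sfM_c]_{ij}}$, I would first record the bookkeeping identity $\hat x_i = \tilde x_i^{-1}$, which is immediate from comparing the two defining products. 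Reading all exponent vectors as integer vectors, a monomial $\y^\lambda$ of $\Fpoly{\beta}{\y}$ then contributes the $\x$-exponent $\gvec{\beta} + \sfM_c\lambda$ to $u(\x,\onevar)$ and the $\x$-exponent $-\sfM_c\lambda$ to $\Fpoly{\beta}{\hat\x}$; in particular the exponents of $\Fpoly{\beta}{\hat\x}$ are obtained by subtracting those of $u(\x,\onevar)$ from $\gvec{\beta}$ termwise.

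To prove the identity, I would use that the denominator of $u$ is exactly $\mon(\x) = \x^{\dvec{\beta}}$ with $\dvec{\beta} = \beta$, so that for every coordinate some monomial of the numerator has $x$-exponent $0$. By positivity of cluster variables in finite type no cancellation occurs when $\y$ is specialized to $\onevar$, so this property persists in $u(\x,\onevar)$, and hence the componentwise minimum of its $\x$-exponents equals $-\beta$ (in root-basis coordinates). Combining this with the termwise relation from the previous paragraph and passing to componentwise maxima, the minimum $-\beta$ becomes $\max\big(\Fpoly{\beta}{\hat\x}\big) = \gvec{\beta} + \beta$, which is exactly the asserted formula $\gvec{\beta} = \max\big(\Fpoly{\beta}{\hat\x}\big) - \beta$.

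For the final clause I would argue by convexity. Each coordinate of $\max(\Fpoly{\beta}{\hat\x})$ is the maximum over the monomials of $\Fpoly{\beta}{\y}$ of a single linear functional $\lambda \mapsto -(\sfM_c\lambda)_j$, and a linear functional attains its maximum over a finite point set at an extreme point of the convex hull. Since that convex hull is by definition $\Newton{\Fpoly{\beta}{\y}}$, whose vertices are themselves exponent vectors of monomials, each coordinate, and hence the whole componentwise maximum, is already attained among the monomials indexing vertices of $\Newton{\Fpoly{\beta}{\y}}$. Assuming \Cref{conj:newtonpolytope}, these vertices are precisely the weight-function values $\Weight{I}{k} - \Weight{\antigreedy}{k}$ ranging over the facets $I$ of $\clusterComplex$, which is what realizes the $g$-vector as a quantity computable from the weight function alone.

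The step I expect to require the most care is the translation of conventions: matching the Fomin--Zelevinsky substitution $\hat y_j$ to the paper's $\hat x_i = \prod_j x_j^{-[\sfM_c]_{ji}}$ (in particular the transpose of indices and the resulting identity $\hat x_i = \tilde x_i^{-1}$), and keeping the two identifications of $\x$-exponents consistent, namely with the root basis when they represent $\beta = \dvec{\beta}$ and with the weight basis when they represent $\gvec{\beta}$. The one genuinely non-formal ingredient is the no-cancellation step, which rests on the established positivity of finite type cluster variables to ensure that the minimal $\x$-degree of $u(\x,\onevar)$ is still $-\beta$ after the specialization $\y=\onevar$.
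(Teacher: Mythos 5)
Your proof is correct, but it reaches the equality by a genuinely different route from the paper's. The paper's proof is two sentences long: the identity $\gvec{\beta} = \max\big(\Fpoly{\beta}{\hat\x}\big) - \beta$ is obtained by applying \cite[Proposition~7.16]{FZ2007} to \Cref{cor:dvector}, so \Cref{conj:newtonpolytope} enters exactly there (through the unique monomial of maximal degree~$\beta$ divisible by all other monomials), and the vertex refinement is the same componentwise ``a linear functional on a finite point set is maximized at a vertex of its convex hull'' argument you give. You instead rederive the equality from first principles: the separation formula, the fact that the denominator of~$u$ in lowest terms is $\x^{\beta}$, and Laurent positivity to ensure that no cancellation survives the specialization $\y=\onevar$ (positivity is also what guarantees that monomials colliding under $\lambda\mapsto\sfM_c\lambda$ do not cancel, a genuine concern since $\sfM_c$ need not be invertible, e.g.\ in type~$A_3$). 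What this buys: your equality is unconditional, as \Cref{conj:newtonpolytope} is never used except to interpret the vertices of the Newton polytope afterwards, which the statement does not actually require. What it costs: positivity is precisely the kind of heavy external machinery the paper's program is designed to avoid --- the paper acknowledges that these Fomin--Zelevinsky conjectures were already proven for acyclic finite types by such means, and its point is that they follow elementarily from subword complex combinatorics once \Cref{conj:newtonpolytope} is granted. One citation should be corrected: the factorization $u(\x,\y)=\x^{\gvec{\beta}}\,\Fpoly{\beta}{\hat\y}$ is \cite[Corollary~6.3]{FZ2007}, i.e.\ the paper's \Cref{prop:clustervariables}, whereas \cite[Proposition~7.16]{FZ2007} is the different statement the paper itself invokes, namely the passage from the unique-maximal-monomial property of \Cref{cor:dvector} to the $g$-vector formula.
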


Observe that one usually uses tropical notation to express $\max\big(\Fpoly{\beta}{\hat\x}\big)$ as briefly used in the second paragraph of \Cref{sec:proofextendedpart} below.
We chose to show it in the present form for simplicity in the given context.

\begin{proof}[Proof of \Cref{cor:gvector}]
  The equality follows from \Cref{cor:dvector} via~\cite[Proposition~7.16]{FZ2007}.
  The following simple fact about polytopes implies that it is enough to consider vertices of the Newton polytope.
  Let~$P \in \RR^n$ be a polytope, and let~$\hat P$ is the image of~$P$ under a linear map $\varphi:\RR^n\rightarrow\RR$.
  Then the maximum of~$\hat P$ is obtained at a vertex of~$P$.
  This is,
  \[
    \max{\hat P} = \max\set{\varphi(v)}{v \text{ vertex of }P}.
  \]
  Applying this observation to every component yields the desired restriction to vertices of the Newton polytope.
\end{proof}

To state the main implication of the conjecture, we recall in the following proposition how to recover the cluster variable from the the $g$-vector and the $F$-polynomial.

\begin{proposition}[{\cite[Corollary~6.3]{FZ2007}}]\label{prop:clustervariables}
  For any $\beta \in \Phiplus$, we have that the cluster variable $\u_\beta(\x,\y)$ is given by
  \[
    \u_\beta(\x,\y) = x_1^{g_1}\cdots x_n^{g_n} \Fpoly{\beta}{\hat\y},
  \]
  where we use the $g$-vector $\gvec{\beta} = (g_1,\ldots,g_n)$ and 
  where we set $\hat\y = (\hat y_1,\ldots, \hat y_n)$ with $\hat y_i = y_i \hat x_i^{-1} = y_i \cdot \prod_j x_j^{[\sfM_c]_{ji}}$.
  \qed
\end{proposition}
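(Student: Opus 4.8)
The plan is to deduce this purely cluster-theoretic statement from the $\ZZ^n$-homogeneity of cluster variables in the principal-coefficient setting; this is the structural heart of~\cite{FZ2007} and does not require the subword complex machinery. Grade the ambient ring $\ZZ[x_1^{\pm 1},\ldots,x_n^{\pm 1};y_1,\ldots,y_n]$ by $\deg(x_i) = \mathbf{e}_i$ and $\deg(y_j) = -\sum_i [\sfM_c]_{ij}\,\mathbf{e}_i$, i.e.\ minus the $j$\th\ column of the initial exchange matrix $\sfM_c$; here $\mathbf{e}_1,\ldots,\mathbf{e}_n$ is the standard basis of $\ZZ^n$, and we write $\x^a = x_1^{a_1}\cdots x_n^{a_n}$ and $\y^b = y_1^{b_1}\cdots y_n^{b_n}$. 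The crux is the claim that each cluster variable $\u_\beta(\x,\y)$ is homogeneous of degree $\gvec{\beta}$ with respect to this grading. This choice of grading is precisely the one for which every $\hat y_k = y_k\prod_j x_j^{[\sfM_c]_{jk}}$ is homogeneous of degree $0$, which is what ultimately makes the substitution below work.

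Granting the homogeneity claim, I would expand $\u_\beta(\x,\y) = \sum_{a,b} c_{a,b}\,\x^a\y^b$ and observe that homogeneity of degree $\gvec{\beta}$ forces $a - \sfM_c\,b = \gvec{\beta}$, so for each $b$ occurring there is a \emph{unique} compatible $x$-exponent $a = a(b) := \gvec{\beta} + \sfM_c\,b$. Consequently the specialization $\Fpoly{\beta}{\y} = \u_\beta(\onevar,\y)$ cannot collapse distinct monomials, and one reads off $\Fpoly{\beta}{\y} = \sum_b c_b\,\y^b$ with $c_b = c_{a(b),b}$. Re-substituting the forced $x$-exponents gives
\[
  \u_\beta(\x,\y) = \sum_b c_b\,\x^{\gvec{\beta} + \sfM_c\,b}\,\y^b = \x^{\gvec{\beta}}\sum_b c_b\prod_k\Big(y_k\prod_j x_j^{[\sfM_c]_{jk}}\Big)^{b_k} = x_1^{g_1}\cdots x_n^{g_n}\,\Fpoly{\beta}{\hat\y},
\]
which is exactly the asserted formula. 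As a consistency check, setting $\y = 0$ and using that $\Fpoly{\beta}{\y}$ has constant term $1$ (recalled in \Cref{rem:RS2}) recovers $\u_\beta(\x,0) = \x^{\gvec{\beta}}$, confirming that the degree under the grading agrees with the $g$-vector in its definition via $u(\x,0)$.

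The main obstacle, and the only genuinely nontrivial input, is the homogeneity claim itself. I would prove it by induction on the mutation distance from the initial seed. The base case is immediate, since $\u_{-\alpha_i}(\x,\y) = x_i$ has degree $\mathbf{e}_i$. For the inductive step one checks that a single mutation preserves homogeneity: the exchange relation expresses a new cluster variable as a binomial divided by the one being replaced, and the grading $\deg(y_j) = -(j\text{\th\ column of }\sfM_c)$ is calibrated so that the two monomials in the numerator of the exchange relation have equal degree, whence the quotient is again homogeneous; tracking the degrees through the mutation then identifies the degree of each cluster variable with its $g$-vector. This is the content of~\cite[Proposition~6.1 and Corollary~6.3]{FZ2007}. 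I expect the sign bookkeeping in the $y$-variable grading across a mutation (the interplay of $[b_{ik}]_+$ and $[-b_{ik}]_+$) to be the fiddly part, but it is entirely formal once the grading is fixed.
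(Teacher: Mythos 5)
The paper offers no proof of this proposition beyond the citation of \cite[Corollary~6.3]{FZ2007}, and your argument is precisely the argument of that source: the $\ZZ^n$-grading with $\deg(x_i)=\mathbf{e}_i$ and $\deg(y_j)$ equal to minus the $j$\th\ column of $\sfM_c$, homogeneity of cluster variables with principal coefficients (\cite[Proposition~6.1]{FZ2007}), and the observation that homogeneity forces the $x$-exponent $a(b)=\gvec{\beta}+\sfM_c\,b$ of every monomial, so your proposal matches the route the paper relies on. One point deserves emphasis: since this paper defines $\gvec{\beta}$ as the exponent vector of $u(\x,0)$ rather than as the multidegree, your final ``consistency check'' (using that $\Fpoly{\beta}{\y}$ has constant term~$1$, which is known in finite type, cf.~\Cref{rem:RS2}) is not merely a check but the step identifying the two definitions of the $g$-vector, and it is correct as written.
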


\begin{theorem}\label{thm:newton}
  Assume that \Cref{conj:newtonpolytope} holds for $\Alg(W,c)$.
  Then the~$c$-associahedron $\brickPolytope\big(\cw{c}\big)$ coincides up to translation with the two Minkowski sums
  \begin{itemize}
    \item $\sum_{\beta\in\Phiplus}\Newton{\Fpoly{\beta}{\y}} \subset V$, or
    \vspace*{5pt}
    \item $\sum_{\beta\in\Phiplus}\Newton{u_\beta(\x,\y)} \subset \varphi(V) \subset V \oplus V$
  \end{itemize}
  for a suitable affine embedding~$\varphi$ of~$V$ into~$V \oplus V$.
\end{theorem}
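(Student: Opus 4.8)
The plan is to combine the Minkowski decomposition of the brick polytope from \Cref{thm:minkowski} with the identification of its Minkowski summands supplied by the assumed \Cref{conj:newtonpolytope}. Writing $\Q = \cw{c}$, of length $m = n+N$, \Cref{thm:minkowski} gives $\brickPolytope(\Q) = \Mink_{k \in [m]} \brickPolytope(\Q,k)$ with $\brickPolytope(\Q,k) = \conv\set{\Weight{I}{k}}{I \text{ a facet of } \clusterComplex}$. I would split the ground set $[m]$ into the first $n$ positions, which under \eqref{eq:rootbijection} correspond to the negative simple roots, and the remaining $N$ positions $\{n+1,\dots,n+N\}$, which correspond bijectively to $\Phiplus$ via $k \mapsto \Root{\greedy}{k}$.

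First I would show that each summand $\brickPolytope(\Q,k)$ with $k \le n$ is a single point. Indeed, for $k \le n$ the letters $q_1,\dots,q_{k-1}$ are the pairwise distinct simple reflections $s_1,\dots,s_{k-1}$, none equal to $s_k$, and $s_j(\omega_k) = \omega_k$ whenever $j \ne k$. Hence $\Weight{I}{k} = \omega_k$ for every facet $I$, independently of $I$, so $\Mink_{k\le n}\brickPolytope(\Q,k)$ is the single point $\sum_{k\le n}\omega_k$ and contributes only a translation.

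Next, for each $k \in \{n+1,\dots,n+N\}$ set $\beta = \Root{\greedy}{k}$; as $k$ runs over this block, $\beta$ runs over $\Phiplus$ exactly once by \eqref{eq:rootbijection}. The assumed \Cref{conj:newtonpolytope} says precisely that $\Newton{\Fpoly{\beta}{\y}} = \brickPolytope(\Q,k) - \Weight{\antigreedy}{k}$, i.e.\ the summand $\brickPolytope(\Q,k)$ is a translate of the Newton polytope. Collecting all constant shifts into $t := \sum_{k\le n}\omega_k + \sum_{k>n}\Weight{\antigreedy}{k}$, I obtain $\brickPolytope(\Q) = t + \sum_{\beta\in\Phiplus}\Newton{\Fpoly{\beta}{\y}}$, which is the first bullet.

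For the second bullet I would pass from $F$-polynomials to cluster variables using \Cref{prop:clustervariables}. Substituting $\hat y_i = y_i\prod_j x_j^{[\sfM_c]_{ji}}$ into a monomial $\y^\lambda$ of $\Fpoly{\beta}{\y}$ and multiplying by $\x^{\gvec{\beta}}$ sends its exponent vector $\lambda$ to $\big(\gvec{\beta} + \sfM_c\lambda,\ \lambda\big) \in V \oplus V$; the underlying map $\psi\colon \lambda \mapsto (\sfM_c\lambda,\lambda)$ is linear and injective, as the second coordinate recovers $\lambda$. Thus $\Newton{u_\beta(\x,\y)} = (\gvec{\beta},0) + \psi\big(\Newton{\Fpoly{\beta}{\y}}\big)$, and since an injective linear map commutes with Minkowski sums, $\sum_{\beta}\Newton{u_\beta(\x,\y)} = \varphi\big(\sum_{\beta}\Newton{\Fpoly{\beta}{\y}}\big)$ for the affine embedding $\varphi(v) = \big(\sum_\beta \gvec{\beta},\,0\big) + \psi(v)$, whose image $\varphi(V)$ is the required $n$-dimensional affine subspace. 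Combining with the first bullet and the fact that affine maps carry translates to translates yields the second bullet. I expect the only delicate point to be bookkeeping: verifying the point-summand claim for $k \le n$ and tracking the translation vectors and the embedding $\varphi$ so that ``coincides up to translation'' is read inside $\varphi(V)$ rather than naively in $V \oplus V$; everything else is a formal consequence of \Cref{thm:minkowski}, \Cref{conj:newtonpolytope}, and \Cref{prop:clustervariables}.
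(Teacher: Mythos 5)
Your proof is correct and follows essentially the same route as the paper: the first bullet via \Cref{thm:minkowski} combined with the assumed \Cref{conj:newtonpolytope} (which identifies each summand $\brickPolytope(\Q,k)$ for $k>n$ as a translate of $\Newton{\Fpoly{\beta}{\y}}$), and the second bullet via the affine dependence of the cluster variables on the $\y$-variables from \Cref{prop:clustervariables}. You additionally make explicit two points the paper's terse proof leaves implicit---that the summands $\brickPolytope(\Q,k)$ for $k \le n$ are single points, and the precise form of the embedding $\varphi$---both of which check out.
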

\begin{proof}
  The first description in terms of the $F$-polynomials follows from the Min\-kowski decomposition of any brick polytope into Coxeter matroid polytopes recalled in \Cref{thm:minkowski}.

  The second description then in terms of the cluster variables follows from the description in terms of the $F$-polynomials using \Cref{prop:clustervariables} as this shows that the cluster variables depend affinely on the $\y$-variables, so that
  \[
    \sum_\beta \Newton{u_\beta(\x,\y)} \subseteq V \oplus V
  \]
  indeed lives inside the affine embedding~$\varphi(V) \subseteq V \oplus V$ given in \Cref{prop:clustervariables}.
\end{proof}

\begin{remark}\label{rem:postnikov1}
  For the \emph{linear Coxeter element} $c = (1,\ldots,n+1)$ in type~$A_n$, this description is equivalent to the description given by A.~Postnikov in~\cite{Pos2009}, see Corollary~8.2 and the following two paragraphs\footnote{We thank Vincent Pilaud for pointing out this connection.}.
  There, it is shown in this case that the Minkowski sum of the Newton polytopes of the $F$-polynomials is exactly the realization given by J.-L.~Loday in~\cite{Lod2004}.
\end{remark}

\Cref{thm:newton} immediately suggests the following conjecture.
Let~$\Alg(\mutmatrix)$ be a finite type cluster algebra with principal coefficients and possibly cyclic initial seed~$(\mutmatrix,\x,\y)$.
Define the \Dfn{$\mutmatrix$-associahedron} to be the Minkowski sum of the Newton polytopes of the $F$-polynomials of the cluster algebra.
This is the sum
\[
  \sum \Newton{\FpolyGen{\y}} \subset V,
\]
ranging over all $F$-polynomial $\FpolyGen{\y}$ of $\Alg(\mutmatrix)$.

\begin{conjecture}
  All $\mutmatrix$-associahedra of a given finite type have the same combinatorial type, \ie, they all have the same face lattice.
  In particular, the duals of the $\mutmatrix$-associahedra realize the cluster complex.
\end{conjecture}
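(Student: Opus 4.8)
The plan is to reduce the statement to an identification of the normal fan of each $\mutmatrix$-associahedron with the $g$-vector fan $\mathcal{F}_\mutmatrix$ associated to the seed $\mutmatrix$, whose maximal cones are spanned by the $g$-vectors of the cluster variables lying in a common cluster. The second assertion---that the duals realize the cluster complex---then follows from the first together with the fact that the cluster complex of a finite type cluster algebra depends only on the Cartan type and not on the chosen initial seed. Indeed, once we know that all $\mutmatrix$-associahedra of a fixed type share one common face lattice, it suffices to exhibit a single seed whose dual is known to realize the cluster complex, and the acyclic seeds coming from a Coxeter element $c$ provide exactly this: by \Cref{thm:newton} (granting \Cref{conj:newtonpolytope}) the $\mutmatrix_c$-associahedron coincides up to translation with the $c$-associahedron $\brickPolytope\big(\cw{c}\big)$, whose polar realizes the cluster complex by \Cref{thm:cluster2}.

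The heart of the argument is therefore the first assertion. Since the $\mutmatrix$-associahedron is by definition the Minkowski sum $\sum \Newton{\FpolyGen{\y}}$, its normal fan is the common refinement of the normal fans of the individual Newton polytopes, and I would aim to show that this common refinement equals $\mathcal{F}_\mutmatrix$. The vertices of the Minkowski sum would then be in bijection with the clusters, with the normal cone at each vertex being the corresponding $g$-vector cone, so that dualizing recovers the cluster complex. The link between the Newton polytopes and the $g$-vectors is already visible in \Cref{cor:gvector} and \Cref{prop:clustervariables}, which recover the $g$-vector of a cluster variable from its $F$-polynomial through a tropicalized evaluation; this is precisely the support-function data needed to read the normal cones off a Minkowski sum.

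The main obstacle is the passage from acyclic to cyclic seeds. For an acyclic $\mutmatrix_c$ the identification of the normal fan with $\mathcal{F}_{\mutmatrix_c}$ is supplied by the brick-polytope Minkowski decomposition of \Cref{thm:minkowski} together with \Cref{thm:newton}, but for a cyclic seed we possess no subword-complex description of the Newton polytopes, since \Cref{conj:newtonpolytope} is formulated only for Coxeter elements. I would try to bridge this gap by a mutation-of-the-initial-seed argument: any seed $\mutmatrix$ is connected to an acyclic one by a sequence of initial-seed mutations, the separation formulas for $F$-polynomials describe how the whole collection transforms under such a mutation, and the fans $\mathcal{F}_\mutmatrix$ for different seeds are related by the piecewise-linear tropical mutation maps, which are combinatorial isomorphisms carrying one realization of the cluster complex to another. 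The delicate point---and the step I expect to resist a routine treatment---is that these mutation maps are genuinely nonlinear, so they need not send Minkowski summands to Minkowski summands nor preserve which faces of the individual Newton polytopes survive in the sum; one therefore cannot simply transport the combinatorial type along a mutation. Instead one must argue directly that the common refinement of the transformed Newton fans is again a complete simplicial fan equal to $\mathcal{F}_\mutmatrix$, \ie, that the Minkowski sum remains \emph{large enough} to cut out every wall of the $g$-vector fan and no more. Establishing this refinement statement uniformly, without an explicit model of the cyclic Newton polytopes, is the crux of the conjecture.
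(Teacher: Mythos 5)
The statement you are attempting is an \emph{open conjecture} in the paper: the authors give no proof of it at all. They only observe that \Cref{thm:newton} --- which is itself conditional on \Cref{conj:newtonpolytope} --- would settle the acyclic case, and they record that the conjecture has been verified in type~$D_4$ in~\cite{BCL2015}. So there is no proof in the paper to compare yours against; the only question is whether your argument closes the gap, and it does not.

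Your proposal is a plan rather than a proof, and you concede the decisive point yourself. The acyclic part is reasonable as far as it goes, but it is doubly conditional: it invokes \Cref{thm:newton}, which requires \Cref{conj:newtonpolytope}, and that conjecture is established in the paper only in type~$A_n$ (\Cref{thm:main1}) and in rank at most~$8$ (\Cref{thm:main2}), so even your base case is not unconditional for a general acyclic seed in a general finite type. The genuinely new content of the conjecture, however, is the cyclic case, and there your argument stops exactly where the difficulty begins: you correctly note that the tropical mutation maps relating the fans $\mathcal{F}_{\mutmatrix}$ are only piecewise linear, hence need not carry Minkowski summands to Minkowski summands nor transport the combinatorial type of a Minkowski sum across a change of initial seed; but you then leave as an unproven assertion (your ``crux'') that the common refinement of the normal fans of the Newton polytopes at a cyclic seed is again a complete simplicial fan equal to the $g$-vector fan. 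That assertion \emph{is} the conjecture, merely restated in the language of normal fans; moreover, the auxiliary facts you would need for it (for instance, that the $g$-vector fan attached to a cyclic initial seed is a complete simplicial fan whose face poset is the cluster complex) are likewise not provided by anything in this paper, whose subword-complex machinery is built only for acyclic seeds $\mutmatrix_c$. Identifying the obstacle is not the same as overcoming it, so the proposal contains a genuine gap and does not prove the statement.
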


In light of \Cref{prop:clustervariables}, one could use instead cluster variables in this definition for cluster algebras with principal coefficients.
Indeed, this conjecture would even make sense (in a slightly weaker form) in infinite types.
Taking the Minkowski sum corresponds to taking the finest common coarsening of the normal fans; one could thus also consider the finest (infinite) coarsening of the normal fans of the Newton polytopes of the $F$-polynomial in infinite types.

\begin{remark}
  The description of the $\mutmatrix$-associahedron using the finest coarsening of the normal fans of the Newton polytopes of the cluster variables was already conjectured by D.~Speyer and L.~Williams in~\cite[Conjecture~8.1]{SW2005} via the language of tropical geometry.
  They consider the variety of a cluster algebra $\Alg(\mutmatrix)$ of finite type and the positive part of its tropicalization and conjecture that whenever the finite type mutation matrix $\mutmatrix$ has full rank (which is the case for principal coefficients), the common refinement of the normal fans of the Newton polytopes of the cluster variables should be the fan given by the cluster complex of $\Alg(\mutmatrix)$ as the $d$-vector fan.
  Thus, \Cref{thm:newton} would imply their conjecture in the case of acyclic finite type cluster algebras with principal coefficients.

  The conjecture further states that if the mutation matrix does not have full rank, then the common refinement of the normal fans of the Newton polytopes of the cluster variables should be a coarsening of the fan dual to the cluster complex of $\Alg(\mutmatrix)$.
  C.~Ceballos, J.-P.~Labb\'e, and the first author proved this conjecture in type~$D_4$ in~\cite{BCL2015}.
\end{remark}

\section{Proof of \Cref{thm:extendedpart}}\label{sec:proofextendedpart}

In this section, we prove \Cref{thm:extendedpart} and also provide several auxiliary results for general finite type~$c$-cluster complexes, which will be used in \Cref{sec:proofmain1} to show the close relationship between $F$-polynomials and weight functions in type~$A_n$.

\medskip

We start with recalling cluster mutations on cluster seeds.
Let $S=\big(\mutmatrix, \u, \f \big)$ with  $\mutmatrix = \begin{bmatrix} \Mpr \\ \Mex \end{bmatrix}$ be a cluster seed as above.
Given that we have indexed columns of $\mutmatrix$ and the rows of $\Mpr$ both by the $d$-vectors of the cluster variables $\u = (u_1,\ldots,u_n)$, we now mutate~$S$ at $\beta \in \Phipm$ such that $\beta = \dvec{u_i}$.
The \Dfn{seed mutation} $\mu_i = \mu_\beta$ in \emph{direction}~$\beta$ defines a new seed $\mu_i(S)=(\mutmatrix', \x', \y')$ defined by the following \Dfn{exchange relations}, written for better readability in the indices $\{1,\ldots,n\}$ of $\{u_1,\ldots,u_n\}$ rather than in their $d$-vectors:

\begin{itemize}

  \item The entries of $\mutmatrix'=(b_{k\ell}')$ are given by
  \begin{align}
    b_{k\ell}'=
    \begin{cases}
      -b_{k\ell}             & \mbox{if } k=i \mbox{ or } \ell=i \\
      b_{k\ell}+b_{ki}b_{i\ell} & \mbox{if } b_{ki}>0 \mbox { and } b_{i\ell}>0 \\
      b_{k\ell}-b_{ki}b_{i\ell} & \mbox{if } b_{ki}<0 \mbox{ and } b_{i\ell}<0 \\
      b_{k\ell}              & \mbox{otherwise}.
    \end{cases}\label{it:matrixmutation}
  \end{align}

  \item The cluster variables $u_k'$ of the cluster $\u'=\{u_1',\ldots,u_n'\}$ are given by $u_k' = u_k$ for $k \neq i$ and
  \begin{align}
    u_i'=\frac{f_i\prod u_k^{\max\{b_{ki},0\}}+\prod u_k^{\max \{-b_{ki},0\} }}{(f_i\oplus 1)u_i}\label{it:clustermutation}
  \end{align}

  \item The frozen variables $f_\ell'$ of the coefficients $\f=\{f_1',\ldots,f_n'\}$ are given by 
  \begin{align}
    f_\ell'= \begin{cases}
            f_i^{-1} & \mbox{if } \ell=i \\
            f_\ell f_i^{\max \{b_{i\ell},0\}}(f_i\oplus 1)^{-b_{i\ell}} & \mbox{if } \ell \neq i
          \end{cases}.\label{it:coefficientmutation}
  \end{align}
\end{itemize}

As usual, we use the tropical notation $\oplus$ in \Cref{it:clustermutation,it:coefficientmutation}, which is defined for monomials by $\left(\prod_i y_i^{a_i}\right) \oplus \left(\prod_i y_i^{b_i}\right) = \prod_i y_i^{\min\{a_i,b_i\}}$.
It is worthwhile to compare this with the notation used in \Cref{cor:gvector}.

\medskip

A direct consequence of the definition is the following description of the frozen variables.

\begin{lemma}[{\cite[Eq.~(2.13)]{FZ2007}}]
  The frozen variables are given by
  \[
    f_i = y_1^{[\Mex]_{1i}} \cdots y_n^{[\Mex]_{ni}}.
  \]
  \qed
\end{lemma}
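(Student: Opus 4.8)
The plan is to prove the identity by induction on the number of mutations separating a seed from the initial seed $\big(\mutmatrix_c,\x,\y\big)$, exploiting that the coefficient mutation rule~\eqref{it:coefficientmutation} and the extended part of the matrix mutation rule~\eqref{it:matrixmutation} are driven by the same sign-dependent combination. For the base case I would simply note that the initial seed has $\Mex = \id_n$ and $f_i = y_i$, so that $f_i = \prod_j y_j^{[\id_n]_{ji}}$ holds on the nose.

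For the inductive step I would assume $f_\ell = \prod_j y_j^{[\Mex]_{j\ell}}$ for every $\ell$ in a seed $S = \big(\mutmatrix,\u,\f\big)$ and pass to $\mu_i(S) = (\mutmatrix',\u',\f')$. First, since we work with principal coefficients in the tropical semifield on $y_1,\ldots,y_n$ and each $f_\ell$ is a genuine monomial in the $y_j$, the definition of $\oplus$ gives $f_i \oplus 1 = \prod_j y_j^{\min\{[\Mex]_{ji},\,0\}}$. For $\ell = i$ the rule~\eqref{it:coefficientmutation} produces $f_i' = f_i^{-1} = \prod_j y_j^{-[\Mex]_{ji}}$, matching the column negation $[\Mex']_{ji} = -[\Mex]_{ji}$. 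For $\ell \neq i$, substituting the inductive hypothesis and the value of $f_i \oplus 1$ into~\eqref{it:coefficientmutation} yields
\[
  f_\ell' = \prod_j y_j^{\,[\Mex]_{j\ell} + [\Mex]_{ji}\max\{b_{i\ell},0\} - b_{i\ell}\min\{[\Mex]_{ji},0\}},
\]
with $b_{i\ell} = [\Mpr]_{i\ell}$, so it remains only to match the exponent against $[\Mex']_{j\ell}$.

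The one thing to verify is then a routine sign analysis of $[\Mex]_{ji}\max\{b_{i\ell},0\} - b_{i\ell}\min\{[\Mex]_{ji},0\}$: it equals $[\Mex]_{ji}\,b_{i\ell}$ when $[\Mex]_{ji}>0$ and $b_{i\ell}>0$, equals $-[\Mex]_{ji}\,b_{i\ell}$ when both are negative, and vanishes in the two mixed-sign cases as well as whenever $[\Mex]_{ji}=0$ or $b_{i\ell}=0$. These are exactly the four branches of~\eqref{it:matrixmutation} applied to the extended rows (for which the $k=i$ clause never triggers, since every extended row index exceeds $i$), so $f_\ell' = \prod_j y_j^{[\Mex']_{j\ell}}$ and the induction closes. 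I expect no genuine obstacle here: the content is entirely a bookkeeping identity, and the only point requiring care is recognizing that the tropical factor $(f_i\oplus 1)^{-b_{i\ell}}$ in the coefficient rule supplies precisely the $-b_{i\ell}\min\{[\Mex]_{ji},0\}$ term needed to reconcile coefficient mutation with extended-part matrix mutation.
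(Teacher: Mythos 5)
Your proof is correct: the base case is the initial seed with $\Mex = \id_n$, the tropical computation $f_i \oplus 1 = \prod_j y_j^{\min\{[\Mex]_{ji},0\}}$ is right, and your sign analysis of the exponent $[\Mex]_{ji}\max\{b_{i\ell},0\} - b_{i\ell}\min\{[\Mex]_{ji},0\}$ reproduces exactly the extended-row branches of the matrix mutation rule~\eqref{it:matrixmutation}, so the induction closes. The paper itself offers no argument---it states the lemma as a direct consequence of the definitions and cites \cite[Eq.~(2.13)]{FZ2007}---and your induction on mutations is precisely the standard bookkeeping underlying that cited equation, so there is nothing to compare beyond noting that you have written out the details the paper delegates to the reference.
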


To prove \Cref{thm:extendedpart}, we will show that the entries in the root configuration behave as the~$c$-vectors described in the matrix mutation in \Cref{it:matrixmutation}.
In order to properly set this up, it is convenient to extract the coefficient of~$\alpha_j$ in the root $\Root{I}{i}$ using the inner product with the fundamental coweights, so that we aim to show that
\begin{align}
  \big[\Mex(I)\big]_{ji} = \Root{I}{i}_j = \dotprod{\Root{I}{i}}{\omega_j^\vee}. \label{eq:root_coefficient}
\end{align}

\medskip

The argument follows the same lines as the proof of \Cref{thm:exchangematrix} in~\cite{PS2015}.
We frequently make use of the following properties of the root and the weight function.

\begin{lemma}[{\cite[Lemma~3.3 \& Lemma~3.6]{CLS2014}, \cite[Lemma~3.3, Lemma~4.4 \& Proposition~6.6]{PS2015}}]\label{lem:lemma33}
  Let~$I$ and~$J$ be two adjacent facets of the subword complex $\subwordComplex(\Q)$ with $I\setminus i = J \setminus j$.
  Then
  \begin{enumerate}

    \item\label{it:bijectionpositiveroots} The map $\Root{I}{\cdot}: k \mapsto \Root{I}{k}$ is a bijection between the complement of~$I$ and~$\Phi^+$.

    \item\label{it:rootflip} The position~$j$ is the unique position in the complement of~$I$ for which $\Root{I}{j}\in \{\pm \Root{I}{i}\}$.
    Moreover, $\Root{I}{j}=\Root{I}{i}\in \Phi^+$ if $i<j$, while $\Root{I}{j}=-\Root{I}{i}\in \Phi^-$ if $j<i$.

    \item\label{it:rootupdate} The map $\Root{J}{\cdot}$ is obtained from $\Root{I}{\cdot}$ by
    \[
      \Root{J}{k}=\left\{\begin{array}{cc}s_{\Root{I}{i}}(\Root{I}{k}) & \mbox{if } \min\{i,j\}<k\leq \max\{i,j\}, \\\Root{I}{k} & \mbox{otherwise}.\end{array}\right.
    \]

    \item\label{it:weightflip} The map~$\Weight{J}{\cdot}$ is obtained from~$\Weight{I}{\cdot}$~by
    \[
      \Weight{J}{k} = \begin{cases} s_{\Root{I}{i}}(\Weight{I}{k}) & \text{if } \min(i,j) < k \le \max(i,j), \\ \Weight{I}{k} & \text{otherwise}. \end{cases}
    \]

    \item\label{it:weightroot} For $k \in I$, we have for $k' \in I$ that
    \[
      \dotprod{\Root{I}{k'}}{\Weight{I}{k}} = 0
    \]
    and we have for $k' \notin I$ that
    \[
      \begin{cases}
        \dotprod{\Root{I}{k'}}{\Weight{I}{k}} \geq 0 & \text{ if }k' \ge k, \\
        \dotprod{\Root{I}{k'}}{\Weight{I}{k}} \leq 0 & \text{ if }k' < k.
      \end{cases}
    \]
  \end{enumerate}
  \qed
\end{lemma}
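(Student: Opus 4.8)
The plan is to prove the five assertions of \Cref{lem:lemma33} about the root and weight functions, working from the inductive definition of these functions and the flip structure of subword complexes. The essential tool throughout is the update formula in part~\eqref{it:rootupdate}, which describes how the root function changes under a flip; I would actually reorganize the proof so that parts~\eqref{it:bijectionpositiveroots}, \eqref{it:rootflip}, and~\eqref{it:rootupdate} are proven together by a single induction on the poset of facets, since they are interdependent. First I would establish~\eqref{it:rootupdate} directly: if $I \setminus i = J \setminus j$ with (say) $i < j$, then writing $\Q = \q_1 \cdots \q_m$ and recalling $\Root{I}{k} = \wordprod{\Q}{[k-1]\ssm I}(\alpha_{q_k})$, the products $\wordprod{\Q}{[k-1]\ssm I}$ and $\wordprod{\Q}{[k-1]\ssm J}$ agree for $k \le i$ and for $k > j$, while for $i < k \le j$ the letter $\q_i$ is present in one product but absent in the other. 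A careful bookkeeping shows that passing from $I$ to $J$ conjugates the intermediate prefix product by the reflection $s_{\Root{I}{i}}$, giving exactly the stated formula; the case $j < i$ is symmetric.

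With~\eqref{it:rootupdate} in hand, I would prove~\eqref{it:bijectionpositiveroots} by induction on facets. For the base case one checks directly on the greedy facet $\greedy$ that the root function on its complement enumerates $\Phi^+$ bijectively, using that the complement of $\greedy$ spells the $\c$-sorting word for $\wo$ and that the standard bijection $q_1\cdots q_{k-1}(\alpha_{q_k})$ between positions in a reduced word for $\wo$ and positive roots is a bijection (this is the classical fact that a reduced word linearly orders the inversions/positive roots). For the inductive step, \eqref{it:rootupdate} shows the complement of $J$ is obtained from the complement of $I$ by applying $s_{\Root{I}{i}}$ to the roots in the flipped window and swapping the roles of positions $i$ and $j$; since $s_{\Root{I}{i}}$ permutes $\Phi$ and the construction only reshuffles which position carries which positive root, the bijection property is preserved. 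The key point to verify is that $\Root{I}{j}$ (the newly uncovered position) lands on the positive root $\Root{I}{i}$, establishing~\eqref{it:rootflip} simultaneously: the sign statement ($\Root{I}{j} = \Root{I}{i} \in \Phi^+$ when $i<j$, and $=-\Root{I}{i} \in \Phi^-$ when $j<i$) follows because $\Root{I}{i}$ for $i \in I$ is a negative root exactly when the flip at $i$ is decreasing, which is controlled by whether the matching position lies before or after $i$.

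Part~\eqref{it:weightflip} is then proven by exactly the same window argument as~\eqref{it:rootupdate}, replacing $\alpha_{q_k}$ by $\omega_{q_k}$ throughout; since the intermediate products $\wordprod{\Q}{[k-1]\ssm I}$ are identical regardless of whether one feeds in a simple root or a fundamental weight, the conjugation by $s_{\Root{I}{i}}$ on the window $\min(i,j) < k \le \max(i,j)$ carries over verbatim. Finally, for~\eqref{it:weightroot} I would again induct on facets. The equality $\dotprod{\Root{I}{k'}}{\Weight{I}{k}} = 0$ for $k,k' \in I$ can be checked on the greedy facet, where it reduces via~\eqref{eq:rootweight} and the duality $\dotprod{\omega_s}{\alpha_t^\vee}=\delta_{s=t}$ to a compatibility of the prefix products, and then propagated using~\eqref{it:rootupdate} and~\eqref{it:weightflip} together with $W$-invariance of the inner product: applying the same reflection $s_{\Root{I}{i}}$ to both arguments preserves their pairing. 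The sign conditions for $k' \notin I$ are the delicate part: here one tracks, across a flip, how the pairing $\dotprod{\Root{I}{k'}}{\Weight{I}{k}}$ changes and verifies the sign is governed by the relative order of $k'$ and $k$.

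The main obstacle will be the sign analysis in~\eqref{it:weightroot} for positions outside the facet, since unlike the vanishing statement it is an inequality that must be maintained under flips, and the reflection $s_{\Root{I}{i}}$ can move a pairing across zero; controlling this requires carefully coupling the sign of $\dotprod{\Root{I}{i}}{\Weight{I}{k}}$ with the combinatorial position of the flip relative to $k$ and $k'$. I would handle this by identifying precisely which pairings lie in the flip window and showing that the update $s_{\Root{I}{i}}$ shifts a pairing by an integer multiple of $\dotprod{\Root{I}{i}}{\Weight{I}{k}}$ whose sign is itself controlled inductively, so the claimed inequalities are stable; this is where the bulk of the bookkeeping lies, and it is exactly the content established in \cite[Proposition~6.6]{PS2015} whose argument I would follow.
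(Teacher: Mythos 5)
The first thing to say is that the paper offers no proof of this lemma to compare against: it is stated with a terminal \qed\ and imported wholesale from \cite{CLS2014} and \cite{PS2015}, so the only meaningful comparison is with the proofs in those references --- which your sketch explicitly sets out to reconstruct, and does so along essentially the correct lines. Your treatment of~\eqref{it:rootupdate} is right (one pedantic correction: passing from $I$ to $J$ is not a conjugation of the prefix product but a left multiplication, $\wordprod{\Q}{[k-1]\ssm J} = s_{\Root{I}{i}} \cdot \wordprod{\Q}{[k-1]\ssm I}$ for $i<k\le j$, since $s_{\Root{I}{i}} = A q_i A^{-1}$ with $A = \wordprod{\Q}{[i-1]\ssm I}$; and for $k>j$ you need the exchange identity $q_i B = B q_j$, which holds because both complements are reduced words for the same element $\wo$), and~\eqref{it:weightflip} does follow verbatim. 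However, your inductive step for~\eqref{it:bijectionpositiveroots} is unsound as phrased: for a non-simple positive root $\beta = \Root{I}{i}$, the reflection $s_\beta$ does \emph{not} preserve $\Phiplus\ssm\{\beta\}$ (in type $A_2$, $s_{\alpha_1+\alpha_2}(\alpha_1) = -\alpha_2$), so ``$s_{\Root{I}{i}}$ permutes $\Phi$ and only reshuffles positions'' does not give positivity of the values on the complement of~$J$. The fix is that no induction is needed at all: the classical fact you invoke for your base case applies to \emph{every} facet, since the complement of any facet of $\subwordComplex(\Q)$ is by definition a reduced word for $\wo$, and $\Root{I}{k}$ for $k\notin I$ is precisely the inversion-sequence root of that reduced word at position $k$; the inversions of $\wo$ exhaust $\Phiplus$. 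Uniqueness in~\eqref{it:rootflip} is then immediate from this injectivity (note also that the printed sign clause ``$\Root{I}{j} = -\Root{I}{i} \in \Phi^-$'' must be read as asserting $\Root{I}{i}\in\Phi^-$; by~\eqref{it:bijectionpositiveroots} one always has $\Root{I}{j}\in\Phiplus$ since $j\notin I$, and your reading is the correct one).

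For~\eqref{it:weightroot}, one genuine caveat: the orthogonality as printed includes $k'=k$, where it is false --- $\dotprod{\Root{I}{k}}{\Weight{I}{k}} = \dotprod{\alpha_{q_k}}{\omega_{q_k}} = \dotprod{\alpha_{q_k}}{\alpha_{q_k}}/2 > 0$, e.g.\ $\dotprod{\alpha_1}{\omega_1}=1$ already on the greedy facet in type $A_2$ --- so your claim that the equality ``can be checked on the greedy facet'' silently fails on the diagonal; the statement must be read with $k'\in I\ssm\{k\}$, consistent with the dual-basis formulation of \cite[Proposition~6.6]{PS2015} that the paper itself invokes in the proof of \Cref{cor:extendedpart}. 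With that correction, the propagation mechanism you identify at the end is exactly the right key and can be made cleaner than you suggest: whenever $k\in I\cap J$ one has $i\ne k$, so the inductively known off-diagonal orthogonality gives $\dotprod{\Root{I}{i}}{\Weight{I}{k}} = 0$; consequently $\Weight{J}{k} = \Weight{I}{k}$ (this is \Cref{lem:weightequality}) and $\dotprod{s_{\Root{I}{i}}(\gamma)}{\Weight{I}{k}} = \dotprod{\gamma}{\Weight{I}{k}}$ for every $\gamma$, so \emph{all} pairings in~\eqref{it:weightroot}, including the inequalities for $k'\notin I$, are literally invariant along any flip keeping $k$ in the facet, and the reflection never ``moves a pairing across zero'' in the cases you need. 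The residual care, which you correctly flag, is the base case: for $k>n$ the greedy facet does not contain $k$, so the induction must start at some facet containing $k$ and use that the facets containing a fixed position are connected by flips (\cite[Corollary~3.11]{PS2015}, as used in the paper's proof of \Cref{lem:weightequality}). Alternatively, the inequalities for $k'\notin I$ admit a direct non-inductive proof from the prefix products: for $k'\ge k$ the pairing equals $\dotprod{C(\alpha_{q_{k'}})}{\omega_{q_k}}$ with $C(\alpha_{q_{k'}})$ an inversion root of a reduced factor, hence positive; for $k'<k$ one picks up a sign from $q_{k'}(\alpha_{q_{k'}}) = -\alpha_{q_{k'}}$.
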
 

We first show that~\eqref{eq:root_coefficient} holds for the initial seed, and second that it is preserved under mutations.

\begin{proposition}\label{prop:extendedpartinitial}
  Let~$\greedy$ be the greedy facet of $\clusterComplex$.
  Then
  \[
    \big[\Mex(\greedy)\big]_{ji} = \Root{\greedy}{i}_j.
  \]
\end{proposition}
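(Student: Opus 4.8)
The plan is to verify the claimed identity directly at the greedy facet $\greedy = \{1,\ldots,n\}$ by computing both sides explicitly. The key point is that the greedy facet is the lexicographically first facet and, by construction of the word $\cw{c}$, corresponds exactly to the \emph{initial} cluster seed $\big(\mutmatrix_c,\x,\y\big)$ whose extended part is the identity matrix $\id_n$. So the content of the proposition is to show that the root function of the greedy facet, restricted to its own positions $i \in \{1,\ldots,n\}$ and read in the simple-root coordinates via $\Root{\greedy}{i}_j = \dotprod{\Root{\greedy}{i}}{\omega_j^\vee}$, reproduces $\big[\id_n\big]_{ji} = \delta_{i=j}$.

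First I would compute $\Root{\greedy}{i}$ for $i \in \greedy = \{1,\ldots,n\}$. By definition $\Root{\greedy}{i} = \wordprod{\Q}{[i-1]\ssm\greedy}(\alpha_{q_i})$, but since $\greedy = \{1,\ldots,n\}$ contains all of $[i-1]$ for $i \le n$, the index set $[i-1]\ssm\greedy$ is empty. Hence the product $\wordprod{\Q}{\emptyset}$ is the identity, and $\Root{\greedy}{i} = \alpha_{q_i} = \alpha_{s_i} = \alpha_i$, using that the first $n$ letters of $\Q = \cw{c}$ spell the reduced word $\c = \s_1\cdots\s_n$. Thus the root configuration of the greedy facet is simply the multiset of simple roots $\{\alpha_1,\ldots,\alpha_n\}$, and the coefficient extraction gives
\[
  \Root{\greedy}{i}_j = \dotprod{\alpha_i}{\omega_j^\vee} = \delta_{i=j},
\]
by the defining duality $\dotprod{\alpha_s}{\omega_t^\vee} = \delta_{s=t}$ between simple roots and fundamental coweights recorded just after \eqref{eq:rootweight}.

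On the other side, I would identify $\big[\Mex(\greedy)\big]_{ji}$ with the entry of the extended part of the initial exchange matrix. Since the greedy facet corresponds to the initial seed $S(\greedy) = \big(\mutmatrix_c,\x,\y\big)$ and $\mutmatrix_c = \begin{bmatrix}\sfM_c\\ \id_n\end{bmatrix}$ has extended part $\Mex(\greedy) = \id_n$, we get $\big[\Mex(\greedy)\big]_{ji} = \delta_{i=j}$ as well. Comparing the two computations yields the claimed equality $\big[\Mex(\greedy)\big]_{ji} = \Root{\greedy}{i}_j$, completing the proof.

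The only subtlety — and the step I would be most careful about — is justifying that the greedy facet $\greedy$ really does index the \emph{initial} seed, i.e.\ that the $d$-vectors attached to the positions $1,\ldots,n$ are exactly the negative simple roots $-\alpha_1,\ldots,-\alpha_n$, so that $\u(\greedy) = (x_1,\ldots,x_n)$ and $\Mex(\greedy)$ is genuinely the identity. This is precisely the content of the identification of positions in $\cw{c}$ with almost positive roots described around \eqref{eq:rootbijection}, together with the fact (from \cite{FZ2003}) that $\dvec{u} = -\alpha_i$ iff $u = x_i$; I would invoke these to pin down that $S(\greedy)$ is the initial seed rather than re-deriving it. Everything else is the routine duality computation above, so this matching of combinatorial and algebraic initial data is the real crux.
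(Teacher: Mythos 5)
Your proof is correct and takes essentially the same route as the paper, whose one-line argument is that both sides are clearly equal to $\dotprod{\alpha_i}{\omega_j^\vee} = \delta_{i=j}$. Your elaboration spells out exactly what that brevity assumes: that $[i-1]\ssm\greedy = \emptyset$ forces $\Root{\greedy}{i} = \alpha_i$, and that the greedy facet $\greedy$ indexes the initial seed so that $\Mex(\greedy) = \id_n$.
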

\begin{proof}
  This is the case as both sides are clearly equal to $\dotprod{\alpha_i}{\omega_j^\vee} = \delta_{i=j}$.
\end{proof}

\begin{proposition}\label{prop:exchangeRelationslemma}
Let~$I,J$ be two faces of $\clusterComplex$ with $I\setminus i = J\setminus j$, and let $k \in I\setminus i$ and $\ell \in \{1,\ldots,n\}$.
Then $\Root{J}{j} = -\Root{I}{i}$ and
\[
  \Root{J}{k}_\ell =
    \begin{cases}
      \Root{I}{k}_\ell + \Root{I}{i}_\ell \cdot \big[\Mpr(I)\big]_{ik} & \mbox{if } \Root{I}{i}_\ell \geq 0, \big[\Mpr(I)\big]_{ik}\geq 0, \\[5pt]
      \Root{I}{k}_\ell - \Root{I}{i}_\ell \cdot \big[\Mpr(I)\big]_{ik} & \mbox{if } \Root{I}{i}_\ell \leq 0, \big[\Mpr(I)\big]_{ik}\leq 0, \\[5pt]
      \Root{I}{k}_\ell & \mbox{otherwise}.
    \end{cases}
\]
\end{proposition}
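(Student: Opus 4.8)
The statement is the inductive step powering the proof of \Cref{thm:extendedpart}: granting \Cref{prop:extendedpartinitial} for the base case, it asserts that the coefficients $\Root{I}{k}_\ell$ of the root configuration transform under a flip exactly as the entries of the extended part $\Mex$ transform under the matrix mutation \eqref{it:matrixmutation}. So the plan is to compute $\Root{J}{\cdot}$ directly from $\Root{I}{\cdot}$ via the flip rules of \Cref{lem:lemma33}, extract the $\alpha_\ell$-coefficient by pairing with $\omega_\ell^\vee$, and rewrite the outcome through the already-established principal-part description of \Cref{thm:exchangematrix} until it matches the three branches on the right-hand side. The decisive structural input throughout is that each $\Root{I}{i}$ is an honest root, hence sign-coherent, with sign governed by whether the flip $i\to j$ is increasing or decreasing.

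First I would dispose of the identity $\Root{J}{j}=-\Root{I}{i}$. If $i<j$, then \Cref{lem:lemma33}\eqref{it:rootupdate} gives $\Root{J}{j}=s_{\Root{I}{i}}(\Root{I}{j})$ and \eqref{it:rootflip} gives $\Root{I}{j}=\Root{I}{i}$, so $\Root{J}{j}=s_{\Root{I}{i}}(\Root{I}{i})=-\Root{I}{i}$; if $j<i$, then \eqref{it:rootupdate} leaves position $j$ fixed while \eqref{it:rootflip} gives $\Root{I}{j}=-\Root{I}{i}$ directly. Next, for $k\in I\setminus i$, \Cref{lem:lemma33}\eqref{it:rootupdate} says $\Root{J}{k}=\Root{I}{k}$ unless $\min\{i,j\}<k\le\max\{i,j\}$, in which case $\Root{J}{k}=s_{\Root{I}{i}}(\Root{I}{k})=\Root{I}{k}-\dotprod{\Root{I}{k}}{\coRoot{I}{i}}\Root{I}{i}$. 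Pairing the reflected case with $\omega_\ell^\vee$ yields
\[
  \Root{J}{k}_\ell=\Root{I}{k}_\ell-\dotprod{\Root{I}{k}}{\coRoot{I}{i}}\,\Root{I}{i}_\ell,
\]
and \Cref{thm:exchangematrix} rewrites $\dotprod{\Root{I}{k}}{\coRoot{I}{i}}$ as $-[\Mpr(I)]_{ik}$ when $i<k$ and as $+[\Mpr(I)]_{ik}$ when $i>k$. Thus, whenever the reflection is applied, $\Root{J}{k}_\ell$ equals $\Root{I}{k}_\ell+[\Mpr(I)]_{ik}\Root{I}{i}_\ell$ for $i<k$ and $\Root{I}{k}_\ell-[\Mpr(I)]_{ik}\Root{I}{i}_\ell$ for $i>k$, while outside the range it equals $\Root{I}{k}_\ell$.

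Since $\Root{I}{i}\in\Phi$ is sign-coherent, all its coordinates $\Root{I}{i}_\ell$ carry a single sign, and by \eqref{it:rootflip} that sign is $+$ precisely when $i<j$ and $-$ precisely when $j<i$. This is exactly the sign hypothesis separating the first two branches of the claimed formula, so the task reduces to matching the \emph{geometric} trigger ``the reflection is applied'' with the \emph{arithmetic} trigger (the sign of $[\Mpr(I)]_{ik}$) governing the mutation cases. Using $\coRoot{I}{j}=\pm\coRoot{I}{i}$, both the $i<j$ and the $j<i$ situation reduce to the single sign statement that, for $k\in I$ and the flip partner $j\notin I$, one has $\dotprod{\Root{I}{k}}{\coRoot{I}{j}}\le 0$ when $k<j$ and $\dotprod{\Root{I}{k}}{\coRoot{I}{j}}\ge 0$ when $k>j$. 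Granting this, the out-of-range positions produce a correction whose sign the mutation rule discards (so both sides give $\Root{I}{k}_\ell$), whereas the in-range positions produce a correction of exactly the sign demanded by the first two branches; the two triggers then agree and the three-case formula falls out.

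Verifying this sign behaviour of the pairing between a facet root $\Root{I}{k}$ and the coroot of the flipped-in root is the main obstacle, and I expect it to be handled precisely as in the proof of \Cref{thm:exchangematrix} in~\cite{PS2015}: the crucial point is that $j$ is the \emph{unique} complement position carrying the root $\Root{I}{i}$, which pins down where the sign flips, and the positivity is then read off from the sign information for the root and weight functions recorded in \Cref{lem:lemma33}\eqref{it:weightroot} in its strengthened form. One caution I would keep in mind is that the analogous naive statement fails for arbitrary $j\notin I$ (as a small type~$A_2$ check shows), so the argument must genuinely use that $j$ is a flip partner rather than an arbitrary complement position.
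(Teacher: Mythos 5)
Your proposal is correct and follows essentially the same route as the paper: both establish $\Root{J}{j}=-\Root{I}{i}$ from \Cref{lem:lemma33}, compute the update of $\Root{I}{k}_\ell$ by expanding the reflection $s_{\Root{I}{i}}$ and identifying the resulting inner product via \Cref{thm:exchangematrix}, and reduce the matching of the geometric trigger (whether the reflection acts) with the arithmetic trigger (the sign of $[\Mpr(I)]_{ik}$) to a sign-compatibility fact from~\cite{PS2015}. The sign statement you isolate and defer to~\cite{PS2015} is precisely what the paper invokes as~\cite[Lemma~6.43]{PS2015}; the only presentational difference is that the paper reduces to increasing flips ($i<j$) without loss of generality and carries out the in-range case by an explicit word-level computation, whereas you treat both flip directions uniformly through the single statement about $\dotprod{\Root{I}{k}}{\coRoot{I}{j}}$.
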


\begin{proof}
  The property that $\Root{J}{j} = -\Root{I}{i}$ holds in general for facets $I \setminus i = J \setminus j$ in subword complexes.
  It is a direct consequence of \Cref{lem:lemma33}\eqref{it:rootflip}.

  It thus remains to show that $\Root{J}{k}_\ell$ is obtained from $\Root{I}{k}_\ell$ as described.
  For simplicity, observe that we can assume that $i < j$ as every facet of any subword complex $\subwordComplex(\Q)$ can be obtained from the greedy facet by a sequence of increasing flips.
  This implies, again by \Cref{lem:lemma33}\eqref{it:rootflip}, that $\Root{I}{i} \in \Phiplus$ and thus $\Root{I}{i}_\ell \geq 0$.
  Even though this is not needed, we note that the case of a decreasing flip $i > j$ could also be computed in the exact same way.

  The first case is $k \in \{i+1,\ldots,j-1\}$.
  It follows from~\cite[Lemma~6.43]{PS2015} that also $\big[\Mpr\big]_{ik} \geq 0$.
  And, as desired, we obtain
  \begin{align*}
    \Root{J}{k}_\ell
      &= \dotprod{\wordprod{Q}{[k]\setminus J} (\alpha_{q_k})}{\omega_\ell^\vee} \\
      &= \dotprod{\wordprod{Q}{[i]\setminus I} \cdot q_i \big( \wordprod{Q}{[i,k]\setminus I} (\alpha_{q_k})\big)}{\omega_\ell^\vee} \\
      &= \dotprod{\wordprod{Q}{[i]\setminus I} \cdot \Big(\wordprod{Q}{[i,k]\setminus I}(\alpha_{q_k}) - \dotprod{\wordprod{Q}{[i,k]\setminus I} (\alpha_{q_k})}{ \alpha_{q_i}^{\vee}} \alpha_{q_i}\Big)}{\omega_\ell^\vee} \\
      &= \dotprod{\wordprod{Q}{[k]\setminus I}(\alpha_{q_k})}{\omega_\ell^\vee} - \dotprod{\wordprod{Q}{[i,k]\setminus I} (\alpha_{q_k})}{ \alpha_{q_i}^{\vee}} \cdot \dotprod{\wordprod{Q}{[i]\setminus I}(\alpha_{q_i})}{\omega_\ell^\vee} \\
      &= \Root{I}{k}_\ell + \Root{I}{i}_\ell \cdot \big[\Mpr(I)\big]_{ik},
  \end{align*}
  where, as before, we write $\cw{c} = \q_1\cdots\q_{n+N}$.
  The first and the last equalities are the definitions together with \Cref{thm:exchangematrix}.
  The second equality is obtained as we do the flip from $i \in I$ to $j \in J$, the third equality is the definition of the application of the simple reflection~$q_i$ to $\wordprod{Q}{[i,k]\setminus I}(\alpha_{q_k})$, and the fourth equality is the linearity of the inner product.

  The second case is $k \notin \{i,\ldots,j\}$.
  It follows from~\cite[Lemma~6.43]{PS2015} that $\big[\Mpr\big]_{ik} \leq 0$, while $\Root{I}{i}_\ell \geq 0$.
  And indeed, the flip from~$i$ to~$j$ does not effect the root function at~$k$ by \Cref{lem:lemma33}\eqref{it:rootupdate}, and we obtain that $\Root{I}{k}_\ell = \Root{J}{k}_\ell$, as desired.
\end{proof}

We are now in the situation to deduce \Cref{thm:extendedpart}.
\begin{proof}[Proof of \Cref{thm:extendedpart}]
  It follows from \Cref{it:matrixmutation} and  \Cref{prop:exchangeRelationslemma} that
  \[
    \big[\Mex(I)\big]_{\ell k} = \Root{I}{k}_\ell \Longrightarrow \big[\Mex(J)\big]_{\ell k'} = \Root{J}{k'}_\ell
  \]
  for $I \setminus i = J \setminus j$ and either $(k,k') = (i,j)$ or $k = k' \neq i$.
  As \Cref{prop:extendedpartinitial} provides the equality for the initial mutation matrix, we obtain $\big[\Mex(I)\big]_{\ell i} = \Root{I}{i}_\ell$ for all $i \in I$.

  The property of the sign-coherence then follows as $\Roots{I} \subseteq \Phi$ for all facets~$I$, and the fact that $\Roots{I}$ forms a basis of the root space is a direct consequence of its iterative description.
\end{proof}

After this calculation, we present several general lemmas about cluster complexes that we then use in \Cref{sec:proofmain1} in type~$A_n$ to deduce \Cref{thm:main1}.

\begin{lemma}\label{lem:weightequality}
  Let $I,J$ be two facets of $\clusterComplex$ with $k \in I \cap J$.
  Then $\Weight{I}{k} = \Weight{J}{k}$.
\end{lemma}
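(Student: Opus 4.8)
The plan is to connect $I$ and $J$ by a sequence of flips, each of which keeps the vertex~$k$ inside the facet, and then to show that the weight at~$k$ is preserved by every such flip. Since $\clusterComplex$ is a subword complex, hence a simplicial ball or sphere (see~\cite{KM2004}), the link of the vertex~$k$ is again a subword complex, and in particular it is connected in codimension one. Consequently any two facets of $\clusterComplex$ containing~$k$ are joined by a sequence $I = I_0, I_1, \ldots, I_r = J$ of facets all containing~$k$ in which consecutive facets differ by a single flip that does not involve~$k$. It therefore suffices to treat two adjacent facets $I, J$ with $I \setminus i = J \setminus j$ and $k \in I \cap J$; note that then $k \notin \{i,j\}$, so in particular $i \neq k$.

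For such an adjacent pair I would invoke the weight-update rule \Cref{lem:lemma33}\eqref{it:weightflip}. If $k \le \min(i,j)$ or $k > \max(i,j)$, the rule gives $\Weight{J}{k} = \Weight{I}{k}$ directly. Otherwise $\min(i,j) < k \le \max(i,j)$ and the rule gives $\Weight{J}{k} = s_{\Root{I}{i}}\big(\Weight{I}{k}\big)$. The crucial observation is that this reflection fixes $\Weight{I}{k}$: since $i \in I$ and $k \in I$, the orthogonality relation \Cref{lem:lemma33}\eqref{it:weightroot} yields $\dotprod{\Root{I}{i}}{\Weight{I}{k}} = 0$, and therefore $s_{\Root{I}{i}}\big(\Weight{I}{k}\big) = \Weight{I}{k}$. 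In both cases $\Weight{J}{k} = \Weight{I}{k}$, which closes the induction along the path and gives $\Weight{I}{k} = \Weight{J}{k}$.

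The main obstacle is the first step, namely guaranteeing that $I$ and $J$ can be connected through facets all containing~$k$, since this is exactly what makes the local weight-update computation applicable at~$k$ at every stage. I would justify it by the standard facts that links of faces in subword complexes are themselves subword complexes~\cite{KM2004} and that a subword complex, being a ball or a sphere, is connected in codimension one; equivalently, the flip graph restricted to facets containing the fixed vertex~$k$ is connected. Once this connectivity is established, the remainder is routine, the only substantive input being the orthogonality in \Cref{lem:lemma33}\eqref{it:weightroot} between a root $\Root{I}{i}$ and a weight $\Weight{I}{k}$ associated to two elements $i,k$ of a common facet.
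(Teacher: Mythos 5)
Your proof is correct and takes essentially the same route as the paper: connect $I$ and $J$ by flips through facets all containing~$k$, then observe that each such flip preserves $\Weight{\cdot}{k}$ by combining the weight-update rule of \Cref{lem:lemma33}\eqref{it:weightflip} with the orthogonality $\dotprod{\Root{I}{i}}{\Weight{I}{k}} = 0$ from \Cref{lem:lemma33}\eqref{it:weightroot}. The only (inessential) difference is the source of the connectivity statement: the paper cites \cite[Corollary~3.11]{PS2015}, whereas you derive it from the facts that links of faces in subword complexes are again subword complexes and hence balls or spheres, connected in codimension one.
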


\begin{proof}
  This is a direct consequence of \Cref{lem:lemma33}\eqref{it:rootupdate} and~\eqref{it:weightroot} and the observation that all facets of $\clusterComplex$ containing~$k$ are connected by flips (see \cite[Corollary~3.11]{PS2015}).
  (Indeed, this property of the weight function was already used in the proof of~\cite[Proposition~6.8]{PS2015}.)
\end{proof}

This lemma implies as well that $\Weight{I}{i} = \Weight{\antigreedy}{i}$ for any facet~$I$ and any $i \in I$.
If $i \in \antigreedy$, this follows immediate.
Otherwise, this follows from the observation that one can construct a facet $I'$ with $i \in I'$ being minimal.
\Cref{lem:weightequality} then implies that $\Weight{I}{i} = \Weight{I'}{i}$ and \Cref{lem:lemma33}\eqref{it:weightflip} implies that $\Weight{I'}{i} = \Weight{\antigreedy}{i}$.
We do not make further use of this additional information.

\medskip

We next recall the following lemma.

\begin{lemma}[{\cite[Lemma~4.5]{PS2015}}]\label{lem:positiveweightshift}
 Let $I \setminus i=J\setminus j$ with $i < j$ be two facets of $\clusterComplex$.
 For any $k \in \{1,\ldots,n+N\}$ we then have
 \[
   \Weight{J}{k} = \Weight{I}{k} - \lambda\Root{I}{i} \text{ for } \lambda\in\RR_{\geq 0} \text{ and } \Root{I}{i} \in \Phiplus.
 \]
 In particular, $\Weight{I}{k} - \Weight{J}{k} \in L^+$.
\end{lemma}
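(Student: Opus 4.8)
The plan is to read off the difference $\Weight{I}{k}-\Weight{J}{k}$ from the weight-flip rule and then pin down the sign of the coefficient of $\Root{I}{i}$ by a reducedness argument inside the subword complex. First, by \Cref{lem:lemma33}\eqref{it:weightflip} and $i<j$, one has $\Weight{J}{k}=\Weight{I}{k}$ whenever $k\notin(i,j]$, so $\lambda=0$ works there, while $\Weight{J}{k}=s_{\Root{I}{i}}(\Weight{I}{k})$ for $i<k\le j$. Writing the reflection as $s_\beta(v)=v-\dotprod{v}{\beta^\vee}\beta$ gives $\Weight{J}{k}=\Weight{I}{k}-\lambda\Root{I}{i}$ with $\lambda=\dotprod{\Weight{I}{k}}{\coRoot{I}{i}}$, and \Cref{lem:lemma33}\eqref{it:rootflip} yields $\Root{I}{i}\in\Phiplus$ (again from $i<j$). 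So the entire lemma reduces to proving $\lambda\ge 0$.

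To compute $\lambda$, I would abbreviate $w_k=\wordprod{\Q}{[k-1]\ssm I}$, so that $\Weight{I}{k}=w_k(\omega_{q_k})$, $\Root{I}{i}=w_i(\alpha_{q_i})$ and $\coRoot{I}{i}=w_i(\alpha_{q_i}^\vee)$. Since $i\in I$ and $i<k$, the positions of $w_k$ split into those of $w_i$ followed by those of $w'=\wordprod{\Q}{(i,k-1]\ssm I}$, hence $w_k=w_iw'$. Using that $W$ preserves both $\dotprod{\cdot}{\cdot}$ and the passage to coroots, I cancel $w_i$ to obtain
\[
  \lambda=\dotprod{w'(\omega_{q_k})}{\alpha_{q_i}^\vee}=\dotprod{\omega_{q_k}}{(w')^{-1}(\alpha_{q_i}^\vee)}.
\]
Because $\omega_{q_k}$ is dominant, $\lambda\ge 0$ as soon as $(w')^{-1}(\alpha_{q_i}^\vee)$ is a positive coroot, equivalently $(w')^{-1}(\alpha_{q_i})\in\Phiplus$; indeed $\lambda$ is then exactly the (nonnegative) coefficient of $\alpha_{q_k}^\vee$ in $(w')^{-1}(\alpha_{q_i}^\vee)$.

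The main point, and the only real obstacle, is this last positivity, which I would deduce from the reduced-word structure of facets. The complement $[m]\ssm J$ reads off a reduced word for $\wo$, so any factor (contiguous subword) of it is again reduced. Now the factor indexed by the positions $[i,k-1]\ssm J$ is precisely $q_i\,w'$: position $i$ lies in the complement of $J$ since $i\in I\ssm J$, whereas every position in $(i,k-1]$ is distinct from both $i$ and $j$ (as $k-1<j$), so $I$ and $J$ agree there and contribute exactly $w'$. Hence $q_i\,w'$ is reduced, giving $\ell(q_i w')=\ell(w')+1$, which is exactly the statement $(w')^{-1}(\alpha_{q_i})\in\Phiplus$.

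Putting this together yields $\lambda\ge 0$ uniformly for all $k\in(i,j]$, with no separate treatment needed for $k=j$ or for $k\in I$. Finally, since $\Weight{I}{k}-\Weight{J}{k}=\lambda\Root{I}{i}$ is a nonnegative integer multiple of a positive root, it lies in $L^+$, which proves the concluding assertion.
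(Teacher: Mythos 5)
Your proof is correct, and its skeleton matches the paper's one-line argument: the paper, too, deduces the lemma from \Cref{lem:lemma33}\eqref{it:weightflip} (giving $\Weight{J}{k}=\Weight{I}{k}$ off the window $(i,j]$ and $\Weight{J}{k}=s_{\Root{I}{i}}(\Weight{I}{k})$ on it) together with \eqref{it:rootflip} for $\Root{I}{i}\in\Phiplus$. Where you genuinely depart is the sign of $\lambda=\bigdotprod{\Weight{I}{k}}{\coRoot{I}{i}}$: the paper disposes of it by citing \Cref{lem:lemma33}\eqref{it:weightroot}, while you prove it outright. Note that \eqref{it:weightroot} as stated requires the \emph{weight} index to lie in the facet, so it literally covers only the case $k\in I$ (where it even forces $\lambda=0$); for $k\notin I$ the paper is implicitly appealing to the stronger version of this inequality established in~\cite{PS2015}, from where the whole lemma is quoted anyway. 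Your argument --- factoring $\wordprod{\Q}{[k-1]\ssm I}=\wordprod{\Q}{[i-1]\ssm I}\cdot w'$ (valid since $i\in I$ and $i<k$), cancelling the prefix to get $\lambda=\bigdotprod{\omega_{q_k}}{(w')^{-1}(\alpha_{q_i}^\vee)}$, and deducing $(w')^{-1}(\alpha_{q_i})\in\Phiplus$ because $q_i$ followed by the letters of $w'$ occupies the positions $[i,k-1]\ssm J$, hence is a factor of the reduced word for $\wo$ spelled by the complement of $J$ (here $k-1<j$ is what guarantees that $I$ and $J$ agree on $(i,k-1]$) --- is in effect a self-contained proof of that stronger inequality, valid uniformly in $k$. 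What the paper's route buys is brevity by leaning on the cited lemma; what yours buys is a complete argument from the definitions, at the cost of redoing a known result.
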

\begin{proof}
  This is a direct consequence of \Cref{lem:lemma33}\eqref{it:weightflip} and~\eqref{it:weightroot}.
\end{proof}

The following lemma has not been considered before and will serve as the starting point of understanding $F$-polynomials in terms of the weight function.

\begin{lemma}\label{lem:rootlemma}
  For $k \in \{n+1,\ldots,n+N\}$, we have that
  \[
    \Weight{\greedy}{k} - \Weight{\antigreedy}{k} = \Root{\greedy}{k}.
  \]
\end{lemma}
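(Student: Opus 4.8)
The plan is to turn the claimed equality into one statement about the antigreedy facet and then to prove that statement. Write $\cw{c} = q_1\cdots q_{n+N}$. Because the greedy facet is $\greedy = \{1,\ldots,n\}$, for $k \in \{n+1,\ldots,n+N\}$ we have $[k-1]\ssm\greedy = \{n+1,\ldots,k-1\}$, so the definitions of the root and weight functions give directly
\[
  \Weight{\greedy}{k} = q_{n+1}\cdots q_{k-1}(\omega_{q_k}), \qquad \Root{\greedy}{k} = q_{n+1}\cdots q_{k-1}(\alpha_{q_k}).
\]
First I would subtract these, using the identity $q_k(\omega_{q_k}) = \omega_{q_k} - \alpha_{q_k}$ (the case $s=t$ of $s(\omega_t) = \omega_t - \delta_{s=t}\alpha_s$ recorded after~\eqref{eq:rootweight}), to obtain $\Weight{\greedy}{k} - \Root{\greedy}{k} = q_{n+1}\cdots q_{k-1}(\omega_{q_k}-\alpha_{q_k}) = q_{n+1}\cdots q_k(\omega_{q_k})$. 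Consequently the lemma is equivalent to the single identity
\begin{equation*}
  \Weight{\antigreedy}{k} = q_{n+1}\cdots q_k(\omega_{q_k}). \tag{$\star$}
\end{equation*}

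It is worth recording the clean reflection form of $(\star)$. Since the inner product is $W$-invariant and $\dotprod{\omega_s}{\alpha_t^\vee} = \delta_{s=t}$, the coroot function satisfies $\dotprod{\Weight{\greedy}{k}}{\coRoot{\greedy}{k}} = \dotprod{\omega_{q_k}}{\alpha_{q_k}^\vee} = 1$. Hence $s_{\Root{\greedy}{k}}(\Weight{\greedy}{k}) = \Weight{\greedy}{k} - \dotprod{\Weight{\greedy}{k}}{\coRoot{\greedy}{k}}\,\Root{\greedy}{k} = \Weight{\greedy}{k} - \Root{\greedy}{k}$, which by the previous paragraph equals $q_{n+1}\cdots q_k(\omega_{q_k})$. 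Thus $(\star)$ is precisely the statement $\Weight{\antigreedy}{k} = s_{\Root{\greedy}{k}}(\Weight{\greedy}{k})$, and once it is established the lemma drops out of the pairing $\dotprod{\Weight{\greedy}{k}}{\coRoot{\greedy}{k}} = 1$.

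To prove $(\star)$ I would travel from $\greedy$ to $\antigreedy$ along a sequence of increasing flips, which exists because $\clusterComplex$ is flip-connected and $\greedy$, $\antigreedy$ are the unique facets all of whose flips are increasing, respectively decreasing. By \Cref{lem:lemma33}\eqref{it:weightflip} the value $\Weight{\cdot}{k}$ changes only at those flips $I \to J$ (with $I\ssm i = J\ssm j$, $i<j$) whose interval straddles $k$, i.e. $i < k \le j$, and each such flip acts by the reflection $s_{\Root{I}{i}}$. It then remains to show that the ordered composite of these straddling reflections carries $\Weight{\greedy}{k}$ to $s_{\Root{\greedy}{k}}(\Weight{\greedy}{k})$; I expect this bookkeeping to be the main obstacle.

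An alternative, and probably cleaner, route to $(\star)$ uses \Cref{lem:weightequality} together with its stated consequence that $\Weight{I}{k} = \Weight{\antigreedy}{k}$ for every facet $I$ with $k \in I$: it then suffices to exhibit one facet $I \ni k$ for which the product $\wordprod{\Q}{[k-1]\ssm I}(\omega_{q_k})$ evaluates to $q_{n+1}\cdots q_k(\omega_{q_k})$. Such a facet can be built by sliding $k$ into $\greedy$ through increasing flips, and the verification that the resulting product acts on $\omega_{q_k}$ exactly as $q_{n+1}\cdots q_k$ would rest on the nested block structure of the $c$-sorting word $\cwo{c}$, which forces the simple reflections occurring before $k$ but not contributing to $\Root{\greedy}{k}$ to fix $\omega_{q_k}$. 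As a sign check along the way, \Cref{lem:positiveweightshift} already guarantees $\Weight{\greedy}{k} - \Weight{\antigreedy}{k} \in L^+$, consistent with the answer $\Root{\greedy}{k} \in \Phiplus$.
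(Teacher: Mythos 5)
Your reduction is correct as far as it goes: from $\Weight{\greedy}{k} = q_{n+1}\cdots q_{k-1}(\omega_{q_k})$ and $\Root{\greedy}{k} = q_{n+1}\cdots q_{k-1}(\alpha_{q_k})$ one does get that the lemma is equivalent to $(\star)$, equivalently to $\Weight{\antigreedy}{k} = s_{\Root{\greedy}{k}}(\Weight{\greedy}{k})$. But this is essentially a restatement --- all the content of the lemma sits in the evaluation of $\Weight{\antigreedy}{k}$, and neither of your routes performs it. Route A stops exactly where the work starts (you concede the ``bookkeeping'' is the main obstacle), and Route B replaces that work by the unproven claim that, for the facet obtained by sliding $k$ into $\greedy$, the product $\wordprod{\Q}{[k-1]\ssm I}$ acts on $\omega_{q_k}$ as $q_{n+1}\cdots q_k$. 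The sentence offered in support --- that the nested block structure ``forces the simple reflections occurring before $k$ but not contributing to $\Root{\greedy}{k}$ to fix $\omega_{q_k}$'' --- is not an argument: whether individual letters fix $\omega_{q_k}$ says nothing about how the whole prefix product acts on it, and this is precisely the kind of claim that requires the combinatorics of $\cw{c}$ to be brought in. So the proposal has a genuine gap in both branches.

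The missing idea, which is what the paper's proof supplies, is the consecutive-window structure of the relevant flip sequences. Repeatedly flipping the smallest position of $\greedy$ keeps the facet of the form $\{m,\ldots,m+n-1\}$ (up to commutations); stopping before reaching position $k$ gives $I = \{k-n,\ldots,k-1\}$, and since each of these flips exchanges $m$ with $m+n < k$, \Cref{lem:lemma33}\eqref{it:weightflip} gives $\Weight{\greedy}{k} = \Weight{I}{k}$ (and likewise $\Root{\greedy}{k} = \Root{I}{k}$ by \eqref{it:rootupdate}). Symmetrically, flipping the largest position of $\antigreedy$ downward until $k$ enters the facet gives $J = \{k,\ldots,k+n-1\}$ with $\Weight{\antigreedy}{k} = \Weight{J}{k}$, and \Cref{lem:weightequality} transfers this to $J' = \{k-n+1,\ldots,k\}$ since $k \in J \cap J'$; the difference $\Weight{I}{k} - \Weight{J'}{k}$ then telescopes to $\Root{\greedy}{k}$, because the two complements in $[k-1]$ differ in the single position $k-n$, whose letter matches $q_k$ by the same window structure. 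Note that once this structure is available, your reflection form finishes the proof immediately: the flip from $I$ to $J'$ exchanges $k-n$ with $k$, it is the unique flip straddling $k$, and by \Cref{lem:lemma33}\eqref{it:rootflip} and \eqref{it:weightflip} it applies exactly $s_{\Root{I}{k-n}} = s_{\Root{\greedy}{k}}$ to the weight at $k$, so $\Weight{\antigreedy}{k} = \Weight{J'}{k} = s_{\Root{\greedy}{k}}(\Weight{\greedy}{k})$. Without this structural input, however, your proposal does not prove the lemma.
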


Observe that, as we have seen in Equation~\eqref{eq:rootbijection} on page~\pageref{eq:rootbijection}, this is also closely related to the bijection relating cluster algebras and subword complexes.

\begin{proof}[Proof of \Cref{lem:rootlemma}]
  Starting with the greedy facet~$\greedy$, we flip the first position as long as we can without flipping into position~$k$ to obtain a facet~$I$.
  Observe that along these flips, the facet always consists of a consecutive sequence of the~$n$ simple reflections.
  We therefore obtain, up to commutations of consecutive commuting letters, that $I = \{k-n,\ldots,k-1\}$ and
  \[
    \Weight{\greedy}{k} = \Weight{I}{k}.
  \]
  By the same argument, we flip the last position in $\antigreedy$ until we flip into position~$k$ to obtain a facet~$J$.
  Again up to commutations of consecutive commuting letters, we get that $J = \{k,\ldots,k+n-1\}$ and
  \[
    \Weight{\antigreedy}{k} = \Weight{J}{k} = \Weight{J'}{k}.
  \]
  Here~$J'$ is the facet obtained from~$J$ by again flipping in~$J$ the last position~$n-1$ times so that, up to commutations, we have $J' = \{k-n+1,\ldots,k\}$.
  The second equality is thus a direct consequence of \Cref{lem:weightequality} as~$k \in J \cap J'$.
  With these observations, we finally obtain for $\cwo{c} = \q_1\cdots\q_N$ that
  \begin{align*}
     \Weight{\greedy}{k} -  \Weight{\antigreedy}{k}
      &= \Weight{\{k-n,\ldots,k-1\}}{k} -  \Weight{\{k-n+1\ldots,k\}}{k} \\
      &= q_1\cdots q_{k-1}(\omega_{q_k}) - q_1\cdots q_{k}(\omega_{q_k}) \\
      &= q_1\cdots q_{k-1}(\omega_{q_k}) - q_1\cdots q_{k-1}(\omega_{q_k} - \alpha_{q_k}) \\
      &= q_1\cdots q_{k-1}(\alpha_{q_k}) \\
      &= \Root{\greedy}{k},
  \end{align*}
  as desired.
\end{proof}

We refer to the last table in \Cref{ex:subwordA2} for an example of this correspondence in type~$A_2$.

\section{Proof of \Cref{thm:main1}}\label{sec:proofmain1}

Before we recall the construction for the $F$-polynomials of type~$A_n$ to prove \Cref{thm:main1}, we set the needed notations.
The Coxeter group~$W$ is the symmetric group $\Sym_{n+1}$ acting on $\RR^{n+1} \big/ \RR(1,\ldots,1)$, whose simple system~$\sref$ are the set of simple transpositions $\sref=\{\tau_1,\ldots,\tau_n\}$ for~$\tau_i = (i,i+1)$ interchanging $e_i$ and $e_{i+1}$.
Thus, the Coxeter element~$c$ is given by the product of all simple transpositions in some order.
The simple roots are moreover given by $\Delta = \set{e_i - e_{i+1}}{1 \leq i \leq n}$, the positive roots by $\Phiplus = \set{ e_i - e_{j+1}}{1 \leq i \leq j \leq n}$, and the fundamental weights by $\nabla = \set{e_1+\ldots+e_i}{1 \leq i \leq n}$.
We refer to \Cref{ex:subwordA2} on page~\pageref{ex:subwordA2} for these notations in type~$A_2$.

For consecutive simple transpositions, we write~$\tau_i < \tau_{i-1}$ if~$\tau_i$ appears to the left of~$\tau_{i-1}$ in~$c$, and~$\tau_i > \tau_{i-1}$ if~$\tau_i$ appears to the right of~$\tau_{i-1}$.
We say that an element $\tau_{i_1} \cdots \tau_{i_m}$ is a \Dfn{prefix of~$c$} if there is a reduced word~$\c$ for~$c$ beginning with~$\tau_{i_1},\ldots,\tau_{i_m}$.
If all $\tau_{i_1},\ldots,\tau_{i_m}$ are inside the interval $\{\tau_i,\tau_{i+1},\ldots,\tau_j\}$ for $i \leq j$, we moreover say that it is a \Dfn{prefix of~$c$ restricted to $\{\tau_i,\ldots,\tau_j\}$} if the prefix property holds after removing all letters not in $\{\tau_i,\ldots,\tau_j\}$ from~$c$.
As an example, consider $c = \tau_1\tau_3\tau_2 = \tau_3\tau_1\tau_2 \in \Sym_{4}$.
The prefixes of~$c$ are $-,\tau_1,\tau_3,\tau_1\tau_3 = \tau_3\tau_1,\tau_1\tau_3\tau_2$, and the prefixes of~$c$ restricted to $\{\tau_1,\tau_2\}$ are $-,\tau_1,\tau_1\tau_2$.

\subsection{$F$-polynomials from $T$-paths}\label{sec:fpolystpaths}

R.~Schiffler derived in~\cite{S2008} an explicit formula for the cluster variables of type~$A_n$ via \emph{T-paths}.
These are certain path on the diagonals of triangulations of a regular $(n+3)$-gon.
G.~Musiker and R.~Schiffler then extended that description and obtained in~\cite{MS2010} an explicit formula for cluster variables in a similar fashion for cluster algebras with principal coefficients associated to unpunctured surfaces.
Together with L.~Williams, they extended in~\cite{MSW2011} the results also to arbitrary surfaces, allowing punctures.
In this section, we review that construction for type~$A_n$ to establish the needed notions to relate the description to the weight function in order to derive \Cref{thm:main1}.
To present their results in a convenient way, we follow~\cite[Section~5]{MS2010} as they directly work with principal coefficients, except that we use slightly simplified notions of $T$-paths.

\medskip

Let~$T$ be a triangulation of a regular $(n+3)$-gon, with boundary diagonals (or edges) labelled by $B_1,\ldots,B_{n+3}$ and with proper diagonals labelled by $\tau_1,\ldots,\tau_n$.
An example can be found in \Cref{fig:A4trianexample}(a); we use $A,B,C,\ldots$ instead of $B_1,B_2,B_3,\ldots$ and $1,2,3,\ldots$ instead of $\tau_1,\tau_2,\tau_3,\ldots$ in examples for better readability.

\begin{figure}[t]
  \begin{center}
    \begin{tabular}{ccc}
      \includegraphics[scale=.5]{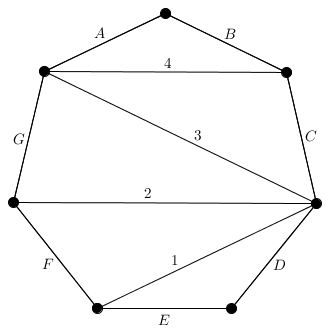}
      &&
      \includegraphics[scale=.5]{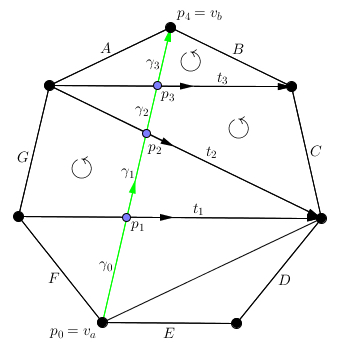} \\[3pt]
      (a) && (b)
    \end{tabular}
  \end{center}
  \caption{\label{fig:A4trianexample}A path in a triangulation of the $7$-gon.}
\end{figure}

Let $\gamma\notin T$ be another proper diagonal connecting non-adjacent vertices~$v_a$ and~$v_b$, oriented from~$v_a$ to~$v_b$.
Denote the intersection points of~$\gamma$ with the diagonals in~$T$ along its orientation by $p_1,\ldots,p_d$, and the corresponding diagonals in~$T$ by~$t_1,\ldots,t_d$.
Let $\gamma_{k}$ denote the segment of~$\gamma$ from point~$p_k$ to point~$p_{k+1}$, where we use $p_0=v_a$ and $p_{d+1}=v_b$.
Each $\gamma_k$ lies in exactly one triangle~$\triangle_k$, and we orient the diagonal~$t_k$ in~$T$ by the orientation induced from the counterclockwise orientation of~$\triangle_k$.
Note that if one considers the opposite path $\gamma^{-1}$ from~$v_b$ to~$v_a$, then the segment~$\gamma_i$ would become $\gamma_{d+1-i}$ and~$t_i$ would become $t_{d+1-i}$.
Moreover, the induced orientations of all~$t_k$ would change.

A \Dfn{$T$-path}~$\zeta$ from~$v_a$ to~$v_b$ in~$T$ is a path~$\zeta=(\zeta_1,\ldots,\zeta_{2d+1})$ in~$T$ which uses the oriented diagonals in the even positions.
In symbols,~$\zeta_{2k}=t_k$ for $1 \leq k \leq d$.
Observe that such a $T$-path is uniquely determined by the directions in which the diagonals~$t_1,\ldots,t_d$ in the even position are followed.
If the direction of~$\zeta$ coincides along the diagonal~$t_k$ with the direction induced by the counterclockwise orientation of the triangle~$\triangle_k$, we write that~$\zeta$ travels~$t_k$ in \Dfn{positive direction}, and it travels~$t_k$ in \Dfn{negative direction} otherwise.
It is not hard to see that there is always a unique $T$-path that travels all~$t_k$'s in positive direction.
We call this path the \Dfn{greedy $T$-path}, and denote it by~$\zetag$.
Similarly, we denote by~$\zetaag$ the \Dfn{antigreedy $T$-path} that travels all~$t_k$'s in negative direction.
(Note that these appeared in~\cite{MS2010} as $\widetilde\alpha_{P_+}$ and $\widetilde\alpha_{P_-}$ in the paragraph before Theorem~5.1.)
For instance, the greedy $T$-path in \Cref{fig:A4trianexample}(b) is $(F,2,3,3,3,4,B)$ and the antigreedy $T$-path is $(1,2,2,3,4,4,A)$.

We say that two $T$-paths~$\zeta$ and~$\zeta'$ are \Dfn{flipped} if~$\zeta$ and~$\zeta'$ only differ in two odd positions $2k-1$ and $2k+1$ for some~$k$.
In other words,~$t_k$ is the unique diagonal which is traveled by~$\zeta$ and~$\zeta'$ in opposite directions, while all others are traveled in the same direction.
We thus also say that~$t_k$ \Dfn{is flipped} between~$\zeta$ and~$\zeta'$.
In \Cref{fig:A4trianexample}(b), flipping~$\zeta_6 = t_3$ in the $T$-path $(F,2,3,3,3,4,B)$ yields the $T$-path $(F,2,3,3,C,4,A)$.

\medskip

To a $T$-path~$\zeta=(\zeta_1,\ldots,\zeta_{2d+1})$, one associates the monomial $m[\zeta]$ given by the product of variables $y_\ell$ such that~$\zeta_{2k} = t_k = \tau_\ell$ is traveled in positive direction.
For instance, the greedy $T$-path~$\zetag = (F,2,3,3,3,4,B)$ yields the monomial $m[{\zetag}] = y_2y_3y_4$, while the antigreedy $T$-path~$\zetaag = (1,2,2,3,4,4,A)$ yields $m[{\zetaag}] = 1$.
Moreover, all monomials obtained from $T$-paths for the diagonal~$\gamma$ in the example are given by
\[
  \begin{array}{|C{4px}C{8px}C{4px}C{8px}C{4px}C{8px}C{8px}|c|}
    \hline
    &&&\zeta&&&                 & m[\zeta]        \\[2pt]
    \hline
    &&&&&&&\\[-9pt]
    F&2^+&3&3^+&3&4^+&B & y_2y_3y_4 \\[2pt]
    F&2^+&3&3^+&C&4^-&A & y_2y_3    \\[2pt]
    1&2^-&G&3^+&3&4^+&B & y_3y_4    \\[2pt]
    1&2^-&G&3^+&C&4^-&A & y_3       \\[2pt]
    1&2^-&2&3^-&4&4^-&A & 1         \\[2pt]
    \hline
  \end{array}
\]
where we labelled the even steps $\zeta_{2k} = t_k$ for $1 \leq k \leq d$ with a~``$+$'' if it is traveled in positive direction and with a~``$-$'' if it is traveled in negative direction.
Also observe how $m[\zeta]$ changes under flips.
If~$t_i$ is flipped between~$\zeta$ and~$\zeta'$ then $m[\zeta] = m[\zeta']\cdot y_\ell$ if~$t_i = \tau_\ell$ is traveled in~$\zeta$ in positive direction (and thus~$\zeta'$ in negative direction).

\medskip

As we have noted above, the orientations of the~$t_k$'s depend on the orientation of~$\gamma$, while the monomial $m[\zeta]$ \emph{does not} depend on this orientation, but only on the two unordered endpoints $\{v_a,v_b\}$.
This combinatorial model now provides a description of the $F$-polynomials for the cluster algebra where the initial datum is the fixed given triangulation~$T$ of a regular $(n+3)$-gon.
It is well-known that $F$-polynomials for this cluster algebra are indexed by (unoriented) diagonals $\gamma \notin T$, see~\cite{MS2010}.

\begin{theorem}[{\cite[Theorem~5.1]{MS2010}}]\label{thm:schiffler}
  Let~$T$ be a triangulation of the regular $(n+3)$-gon, and let $\gamma \notin T$ with endpoints $\{v_a,v_b\}$.
  The $F$-polynomial $F_\gamma(\y)$ of~$\gamma$ is then given by
  \[
    F_\gamma(\y) = \sum m[\zeta],
  \]
  where the sum ranges over all $T$-paths~$\zeta$ from~$v_a$ to~$v_b$.
  \qed
\end{theorem}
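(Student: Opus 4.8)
The plan is to prove the formula by induction on the number~$d$ of diagonals of~$T$ crossed by~$\gamma$, using that the $F$-polynomial is the specialization $F_\gamma(\y) = x_\gamma(\onevar,\y)$ of the cluster variable~$x_\gamma$ (with principal coefficients at the fixed seed given by~$T$). It then suffices to show that the $T$-path partition function $\sum_\zeta m[\zeta]$ obeys the same two-term recursion as the cluster variables, so that both sides satisfy one recursion with one initial condition and must coincide.

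First I would settle the base case $d=1$. Here $\gamma$ crosses a single diagonal $t_1 = \tau_\ell$, and $\gamma$ together with $t_1$ are the two diagonals of a quadrilateral whose four sides are edges of~$T$. There are then exactly two $T$-paths from~$v_a$ to~$v_b$, distinguished by the direction in which they traverse~$t_1$; they contribute the monomials $y_\ell$ and~$1$, so $\sum_\zeta m[\zeta] = 1 + y_\ell$. This matches the $F$-polynomial produced by a single mutation in direction~$\tau_\ell$, obtained by specializing the corresponding exchange relation at $x_1=\cdots=x_n=1$.

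For the inductive step I would peel off the last crossing. Consider the final triangle~$\triangle_d$ traversed by~$\gamma$, having~$v_b$ as a vertex and the crossed side $t_d = \tau_{\ell_d}$ connecting the other two vertices~$u$ and~$w$. Forming the two arcs $\gamma'$ from~$v_a$ to~$u$ and $\gamma''$ from~$v_a$ to~$w$, each crosses only $t_1,\ldots,t_{d-1}$, so the inductive hypothesis applies to them. A Ptolemy (exchange) relation in the quadrilateral $v_a\,u\,v_b\,w$ expresses~$x_\gamma$ through $x_{\gamma'}$ and $x_{\gamma''}$; specializing $x_i=1$ yields a two-term recursion for $F_\gamma(\y)$ in terms of $F_{\gamma'}(\y)$ and a $y$-monomial multiple of $F_{\gamma''}(\y)$. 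On the combinatorial side I would partition the $T$-paths of~$\gamma$ according to the direction in which they traverse~$t_d$; after deleting their last two steps these two families should restrict bijectively to the $T$-paths of~$\gamma'$ and of~$\gamma''$, matching the algebraic recursion term by term.

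The main obstacle will be the bookkeeping of the induced orientations and of the resulting $y$-monomials. Whether a path traverses~$t_k$ in positive or negative direction is defined relative to the global orientation of~$\gamma$ and to the counterclockwise orientation of~$\triangle_k$, and these data shift delicately when one restricts from~$\gamma$ to~$\gamma'$ or~$\gamma''$; this must be reconciled with the rule $m[\zeta]=m[\zeta']\,y_\ell$ governing how the monomial changes when $t_i=\tau_\ell$ is flipped, and with the precise $y$-coefficient that principal coefficients attach to the Ptolemy relation. Turning the partition of $T$-paths into a genuine weight-preserving bijection is the delicate part; the explicit greedy and antigreedy paths~$\zetag$ and~$\zetaag$ can serve as anchors to fix signs and orientations consistently. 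An alternative that sidesteps much of the orientation subtlety is to pass to the snake graph~$G_\gamma$ of~$\gamma$, show that $T$-paths correspond bijectively to perfect matchings of~$G_\gamma$ with $m[\zeta]$ recording the matching's height monomial, and invoke the known matching description of $F$-polynomials for surface cluster algebras.
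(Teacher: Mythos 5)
First, note that the paper contains \emph{no} proof of this statement: it is imported verbatim from Musiker--Schiffler \cite{MS2010} (their Theorem~5.1) and used as a black box, which is why its statement carries the citation and a closing box rather than being followed by a proof. So your attempt cannot be measured against an internal argument; it must stand on its own, and as written its inductive step has a genuine gap. You claim that, after splitting the $T$-paths of $\gamma$ by the direction in which they traverse $t_d$, ``deleting their last two steps'' restricts the two families bijectively and weight-preservingly to the $T$-paths of $\gamma'$ and of $\gamma''$. This fails in general, because $\gamma'$ and $\gamma''$ need not cross all of $t_1,\dots,t_{d-1}$: every $t_i$ having $u$ (resp.\ $w$) as an endpoint is \emph{not} crossed by $\gamma'$ (resp.\ $\gamma''$), so the corresponding $T$-paths are shorter than $2d-1$ steps and cannot lie in the image of your deletion map. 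This already happens in the pentagon (type $A_2$, $c=\tau_1\tau_2$, with $\gamma$ the diagonal crossing both $t_1=\tau_1$ and $t_2=\tau_2$, which share a vertex): the family of $T$-paths of $\gamma$ corresponding to $\gamma'$ consists of a single path with monomial $y_1y_2$, while $\gamma'$ is a boundary edge crossing nothing, whose sole $T$-path has monomial $1$. Thus the factor $y_1y_2$ --- not merely the naive flip factor $y_2$ --- must be absorbed into the coefficient of the exchange relation, matching $\Fpoly{\gamma}{\y}=y_1y_2+y_1+1=y_1y_2\cdot 1+1\cdot(y_1+1)$.

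This example pinpoints the two items your plan defers but which constitute the actual substance of the theorem. First, within each of the two families the traversal directions of all diagonals skipped by $\gamma'$ (resp.\ $\gamma''$) are \emph{forced} --- this forcing is exactly the local restriction recorded in \Cref{prop:greedytpath} --- and the resulting weight discrepancy must be shown to equal the coefficient monomial attached to the principal-coefficient Ptolemy relation. Second, those coefficient monomials must be computed \emph{independently} of the $F$-polynomials one is trying to determine, since for a non-initial exchange they are governed by the coefficient dynamics; otherwise the argument is circular. Your architecture (induction on the number of crossings plus a Ptolemy relation) is essentially Schiffler's original strategy in the coefficient-free setting of \cite{S2008}; the coefficient tracking is precisely the new difficulty with principal coefficients, and it is why \cite{MS2010} do not argue this way, but instead first prove a perfect-matching (snake-graph) expansion and then derive the $T$-path formula from it by a weight-preserving bijection. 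Consequently, your fallback route --- ``pass to the snake graph and invoke the known matching description'' --- is not an alternative proof: it \emph{is} the proof of \cite{MS2010}, i.e., it amounts to citing the very source that the paper itself cites for this theorem.
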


\medskip
Next, we recall how to associate a triangulation~$T_c$ of the regular $(n+3)$-gon to a Coxeter element~$c$ in type~$A_n$.
\begin{enumerate}
  \item Pick a fixed vertex of the $(n+3)$-gon, labelled~$v_1$, and draw an edge connecting the two vertices adjacent to $v_1$.
  Label the new edge by $\tau_1$.

  \item For each $i = 2,\ldots,n$, label the vertex
    \[
      {}\begin{cases}
        \text{clockwise} & \text{ if } \tau_i < \tau_{i-1} \\
        \text{counterclockwise} & \text{ if } \tau_i > \tau_{i-1}
      \end{cases}
    \]
    from~$v_{i-1}$ by~$v_i$, draw an edge connecting the two vertices adjacent to~$v_i$ and different from~$v_{i-1}$, and label the new edge~$\tau_i$.
\end{enumerate}

The simple example of type $A_3$ with $c = \tau_1\tau_3\tau_2$ is
\begin{center}
  \includegraphics[scale=.3]{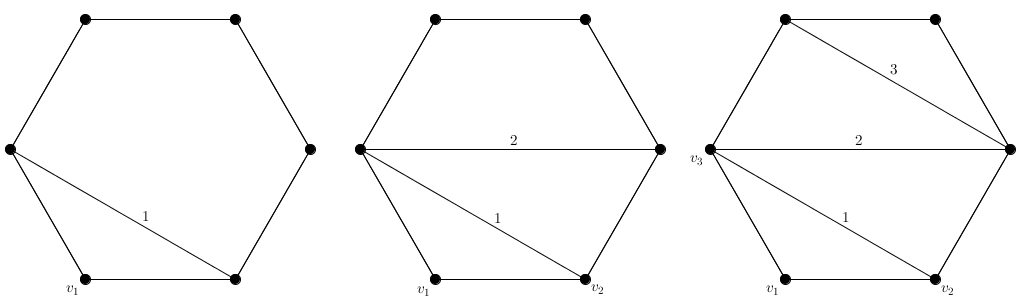}.
\end{center}
Moreover, the triangulation in \Cref{fig:A4trianexample} corresponds to the Coxeter element $c = \tau_3\tau_2\tau_1\tau_4$ in type~$A_4$.

\begin{lemma}\label{lem:diagonalspositiveroots}
  Let~$c$ be a Coxeter element in type~$A_n$, and let~$1 \leq i \leq j \leq n$.
  Then there is a unique diagonal~$\gamma \notin T_c$ that crosses exactly the diagonals labelled by $\tau_i,\tau_{i+1},\ldots,\tau_j$, and every diagonal not in~$T_c$ can be obtained this way.
\end{lemma}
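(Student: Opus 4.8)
The plan is to exploit the fact that $T_c$ is a \emph{snake} (path) triangulation and that the recipe defining $T_c$ labels its diagonals consecutively along the dual path. So the first thing I would record is the structural claim that for every $1 \le k \le n-1$ the diagonals $\tau_k$ and $\tau_{k+1}$ bound a common triangle of $T_c$, and that no triangle of $T_c$ has all three sides among the $\tau$'s. This should follow by induction on the steps of the construction: when $v_i$ is inserted next to $v_{i-1}$, the new diagonal $\tau_i$ cuts off a triangle that shares the edge $\tau_{i-1}$ while keeping a boundary edge of the polygon, so it extends the previously built strip by exactly one triangle and never creates a branch. Consequently the dual graph of $T_c$ (triangles as nodes, shared diagonals as edges) is a path whose edges, read from one end to the other, are precisely $\tau_1,\tau_2,\ldots,\tau_n$ in this order; I would write $\triangle_0,\triangle_1,\ldots,\triangle_n$ for the triangles, so that $\triangle_{k-1}\cap\triangle_k=\tau_k$.

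The first genuine step is contiguity. Let $\gamma\notin T_c$ be a proper diagonal. The set of triangles met by $\gamma$ is connected in the dual graph (as $\gamma$ is a connected chord passing from each triangle only into an adjacent one), and a connected subgraph of a path is a contiguous subpath; hence the triangles met are $\triangle_{i-1},\triangle_i,\ldots,\triangle_j$ for some $1\le i\le j\le n$, and the diagonals crossed are exactly the separators of consecutive triangles of this run, namely $\tau_i,\tau_{i+1},\ldots,\tau_j$. That $\gamma$ crosses at least one diagonal, so $i\le j$, is because a chord joining non-adjacent vertices cannot lie inside a single triangle. This already shows that every $\gamma\notin T_c$ crosses exactly a contiguous interval $\{\tau_i,\ldots,\tau_j\}$.

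For the converse, fix $1\le i\le j\le n$ and set $P_{i,j}=\triangle_{i-1}\cup\cdots\cup\triangle_j$, a convex sub-polygon whose only $T_c$-diagonals are $\tau_i,\ldots,\tau_j$ and which carries the induced snake triangulation. I would argue both endpoints of a chord crossing exactly $\tau_i,\ldots,\tau_j$ are forced: on the $\triangle_{i-1}$ side the chord cannot cross $\tau_{i-1}$, so after crossing $\tau_i$ it terminates at a vertex of $\triangle_{i-1}$, and since it crosses $\tau_i$ transversally this vertex must be the apex of $\triangle_{i-1}$ opposite $\tau_i$; symmetrically the other endpoint is the apex of $\triangle_j$ opposite $\tau_j$. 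Thus such a chord is unique, and conversely the chord joining these two apexes stays inside $P_{i,j}$ and crosses each of $\tau_i,\ldots,\tau_j$ exactly once (the standard long diagonal of a snake-triangulated polygon) and no other diagonal of $T_c$. Combining the two directions yields mutually inverse maps between the diagonals $\gamma\notin T_c$ and the intervals $\{\tau_i,\ldots,\tau_j\}$, so every diagonal not in $T_c$ arises this way; as a consistency check both sides are counted by $\binom{n+1}{2}=|\Phiplus|$, matching the intended indexing of $F$-polynomials by the positive roots $e_i-e_{j+1}$.

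The hard part will be the first paragraph: extracting from the slightly informal inductive recipe for $T_c$ the precise facts that each freshly drawn $\tau_i$ shares a triangle with $\tau_{i-1}$ and keeps a boundary edge, so that the strip never branches and the construction's labeling agrees with the dual-path order. Once this snake structure and its labeling are secured, the contiguity and uniqueness arguments are elementary planar combinatorics.
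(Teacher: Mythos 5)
Your proof is correct and follows essentially the same route as the paper, which argues in two lines that $T_c$ is exactly a triangulation with no inner triangles (your ``snake'' / dual-path structure) whose diagonals are labelled consecutively, whence the statement follows. Your write-up simply makes explicit the contiguity and apex-uniqueness details that the paper leaves to the reader.
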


\begin{proof}
  The triangulations that can be obtained (up to rotational symmetry) from a Coxeter element by the procedure are exactly the triangulations that do not have inner triangles, \ie, no triangles for which all three sides are proper diagonals.
  As the diagonals are labelled consecutively, the statement follows.
\end{proof}

\begin{figure}[t]
  \resizebox{\columnwidth}{!}{
    \begin{tabular}{|c||c|c|c|}
    \hline
    $\beta$ & $e_1-e_2$                                       & $e_3-e_4$          & $e_1-e_4$ \\ \hline
    \raisebox{-.9\height}{$\gamma$} & $\raisebox{-.9\height}{\includegraphics[scale=.25]{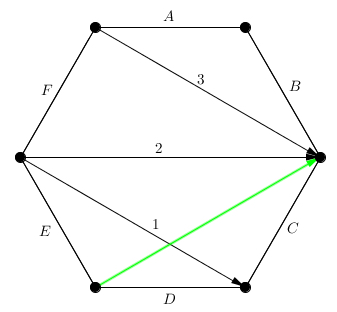}}$   & $\raisebox{-.9\height}{\includegraphics[scale=.25]{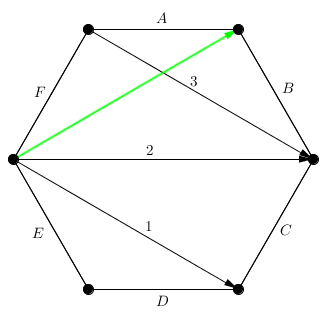}}$ & $\raisebox{-.9\height}{\includegraphics[scale=.25]{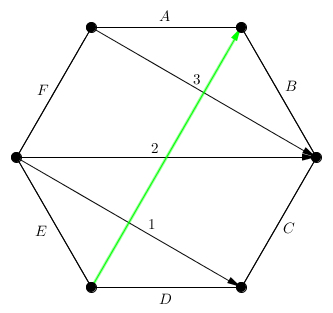}}$ \\ \hline
    $T_c$-paths   &   $\begin{array}{C{4px}C{8px}C{4px}C{8px}C{4px}C{8px}C{8px}}
      &&&\\[-9pt]
      E&1^+&C  \\[2pt]
      D&1^-&2     \\[2pt]
    \end{array}$  &    $\begin{array}{C{4px}C{8px}C{4px}C{8px}C{4px}C{8px}C{8px}}
      &&&\\[-9pt]
      2&3^+&A  \\[2pt]
      F&3^-&B     \\[2pt]
    \end{array}$    & $\begin{array}{C{4px}C{8px}C{4px}C{8px}C{4px}C{8px}C{8px}}
      &&&&&&\\[-9pt]
      E&1^+&1&2^+&3&3^+&B  \\[2pt]
      E&1^+&C&2^-&F&3^+&B     \\[2pt]
      D&1^-&2&2^-&F&3^+&B     \\[2pt]
      E&1^-&2&2^-&2&3^+&A        \\[2pt]
      D&1^-&2&2^-&2&3^-&A         \\[2pt]
    \end{array}$ \\ \hline
    $\Fpoly{\beta}{\y}$ & $y_1+1$ & $y_3+1$ & $y_1y_2y_3+y_1y_3+y_3+y_1+1$ \\ \hline\hline
    $\beta$ & $e_2-e_4 $                                       & $e_1-e_3$          & $e_2-e_3$ \\ \hline
    \raisebox{-.9\height}{$\gamma$}  & $\raisebox{-.9\height}{\includegraphics[scale=.25]{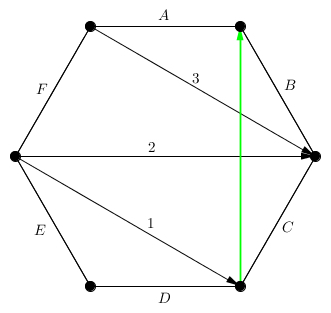}}$   & $\raisebox{-.9\height}{\includegraphics[scale=.25]{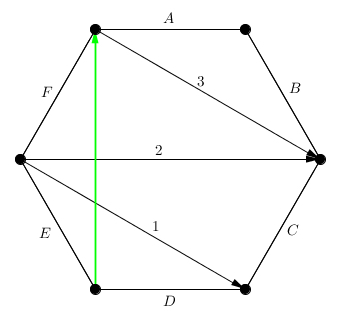}}$ & $\raisebox{-.9\height}{\includegraphics[scale=.25]{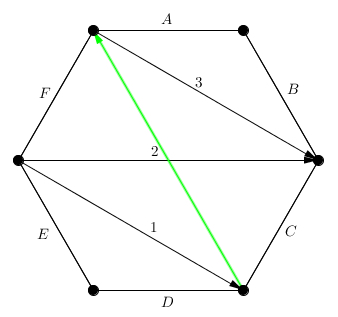}}$ \\ \hline
    $T_c$-paths   &   $\begin{array}{C{4px}C{8px}C{4px}C{8px}C{4px}C{8px}C{8px}}
      &&&&\\[-9pt]
      1&2^+&3&3^+&B  \\[2pt]
      C&2^-&F&3^+&B     \\[2pt]
      C&2^-&2&3^-&A     \\[2pt]
    \end{array}$  &    $\begin{array}{C{4px}C{8px}C{4px}C{8px}C{4px}C{8px}C{8px}}
      &&&&\\[-9pt]
      E&1^+&1&2^+&3  \\[2pt]
      E&1^+&C&2^-&F    \\[2pt]
      D&1^-&2&2^-&F    \\[2pt]
    \end{array}$    & $\begin{array}{C{4px}C{8px}C{4px}C{8px}C{4px}C{8px}C{8px}}
      &&&\\[-9pt]
      1&2^+&3  \\[2pt]
      C&2^-&F     \\[2pt]
    \end{array}$ \\ \hline
    $\Fpoly{\beta}{\y}$ & $y_2y_3+y_3+1$ & $y_1y_2+y_1+1$ & $y_2+1$ \\ \hline
    \end{tabular}
  }
  \caption{\label{fig:132Tpaths}$T_c$-paths for $c=\tau_1\tau_3\tau_2$ in type $A_3$.}
\end{figure}

\begin{example}
  \Cref{fig:132Tpaths} shows for $c=\tau_1\tau_3\tau_2$ in type~$A_3$ and each positive root $\beta=e_i-e_{j+1}$, the unique diagonal $\gamma\notin T_c$ crossing the diagonals $\tau_i,\ldots,\tau_j$, and all $T_c$-paths for this~$\gamma$.
\end{example}

We have the following corollary of the above \Cref{thm:schiffler}, which we will then use to deduce \Cref{thm:main1}.

\begin{corollary}\label{cor:TpathAn}
  Let~$c$ be a Coxeter element in type~$A_n$, let $\beta = e_i - e_{j+1}$ be a positive root, and let~$\gamma \notin T_c$ be the unique diagonal crossing exactly the diagonals labelled $\tau_i,\ldots,\tau_j$ in~$T_c$.
  Let the endpoints of~$\gamma$ be $\{v_a,v_b\}$.
  The $F$-polynomials for the cluster algebra $\Alg(W,c)$ associated to~$\beta$ is then given by
  \[
    \Fpoly{\beta}{\y} = \sum m[\zeta],
  \]
  where the sum ranges over all $T$-paths~$\zeta$ from~$v_a$ to~$v_b$.
\end{corollary}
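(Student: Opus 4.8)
The plan is to deduce the corollary directly from \Cref{thm:schiffler} by identifying the cluster algebra $\Alg(W,c)$ with the surface cluster algebra (with principal coefficients) whose initial seed is the triangulation $T_c$ of the regular $(n+3)$-gon. Once this identification is established, and once the diagonal $\gamma$ of \Cref{lem:diagonalspositiveroots} is matched with the positive root $\beta = e_i - e_{j+1}$, the right-hand side $\sum m[\zeta]$ of \Cref{thm:schiffler} computes precisely $\Fpoly{\beta}{\y}$, and the claim follows. So two things must be verified: that the two initial seeds agree, and that $\gamma$ corresponds to $\beta$ under the $d$-vector bijection.

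First I would check that the signed adjacency quiver of the triangulation $T_c$ equals the quiver of the matrix $\sfM_c$ defined in \eqref{eq:bmatrix}. By the proof of \Cref{lem:diagonalspositiveroots}, the triangulation $T_c$ has no inner triangles, so two proper diagonals $\tau_k$ and $\tau_\ell$ bound a common triangle only when $|k-\ell| = 1$; hence the underlying graph of the quiver is the path $\tau_1 - \tau_2 - \cdots - \tau_n$, which is the type-$A_n$ Dynkin diagram. It then remains to compare orientations: the arrow between $\tau_{i-1}$ and $\tau_i$ points according to whether the vertex $v_i$ was placed clockwise or counterclockwise from $v_{i-1}$ in the construction, which by definition records whether $\tau_i$ lies to the left or to the right of $\tau_{i-1}$ in $c$. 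This is exactly the rule ``$s \rightarrow t$ if $s$ precedes $t$ in $c$'' governing the sign of $b_{st}$ in \eqref{eq:bmatrix}, so the signed adjacency matrix of $T_c$ equals $\sfM_c$. Since both cluster algebras are taken with principal coefficients (identity extended part) and with the same initial principal matrix, they coincide; in particular they have the same $F$-polynomials, indexed compatibly by the bijection $u \mapsto \dvec{u}$ of \cite[Theorem~1.9]{FZ2003}.

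Second I would identify $\gamma$ with $\beta$ through denominator vectors. In the polygon model the $d$-vector of the cluster variable of a diagonal records its crossings with the initial diagonals: its $k$-th coordinate is $1$ if $\gamma$ crosses $\tau_k$ and $0$ otherwise. Since $\gamma$ crosses exactly $\tau_i,\ldots,\tau_j$ by \Cref{lem:diagonalspositiveroots}, its $d$-vector is $\alpha_i + \cdots + \alpha_j = e_i - e_{j+1} = \beta$ in type $A_n$, using $\alpha_k = e_k - e_{k+1}$. By the $d$-vector bijection this pins down the cluster variable, hence the $F$-polynomial, so $F_\gamma(\y) = \Fpoly{\beta}{\y}$, and substituting into \Cref{thm:schiffler} yields the stated formula. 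I expect the main obstacle to be the sign bookkeeping in the first step: one must confirm that the clockwise/counterclockwise convention of the $T_c$-construction induces the arrow directions of \eqref{eq:bmatrix} rather than their reversal, and that the Musiker--Schiffler principal-coefficient conventions index $y_\ell$ by $\tau_\ell$ in the same way we index $y_k$ by $\alpha_k$; everything else is either standard surface cluster algebra theory or a direct consequence of \Cref{lem:diagonalspositiveroots}.
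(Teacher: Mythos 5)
Your proposal is correct and takes essentially the same approach as the paper: the paper's entire proof of \Cref{cor:TpathAn} is the one-line assertion that it ``follows from the well known connection between $\Alg(W,c)$ and the triangulation $T_c$,'' and the identification you spell out---matching the signed adjacency matrix of $T_c$ with $\sfM_c$ under principal coefficients, and matching $\gamma$ to $\beta$ via crossing numbers as $d$-vectors, before invoking \Cref{thm:schiffler}---is exactly the content of that well-known connection. The sign-convention caveat you flag is a legitimate detail the paper leaves implicit, but it does not change the route of the argument.
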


\begin{proof}
  This follows from the well known connection between $\Alg(W,c)$ and the triangulation~$T_c$ described above.
\end{proof}

\begin{example}
For $c=\tau_1\tau_3\tau_2$ in type $A_3$, \Cref{fig:132Tpaths} also provides all $\Fpoly{\beta}{\y}$ for~$\beta \in\Phiplus$ using the construction of $T_c$-paths.
\end{example}

We finally need the following proposition regarding possible flips in triangulations with respect to a Coxeter element~$c$.

\begin{proposition}\label{prop:greedytpath}
  Let~$T_c$ be the triangulation associated to a Coxeter element~$c$, and let~$\gamma\notin T_c$ be the unique diagonal oriented from $v_a$ to $v_b$ which crosses exactly the diagonals $\tau_i,\ldots,\tau_j$ in this order.
  Then for any prefix $\tau_{i_1} \cdots \tau_{i_m}$ of~$c$ restricted to $\{\tau_i,\ldots,\tau_{j}\}$, one can flip the diagonals labelled~$\tau_{i_m},\ldots,\tau_{i_1}$ in this order in the antigreedy $T_c$-path from~$v_a$ to~$v_b$.
  Moreover, every $T_c$-path from~$v_a$ to~$v_b$ is obtained this way for a unique prefix.
\end{proposition}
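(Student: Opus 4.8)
The plan is to parametrise the $T_c$-paths from $v_a$ to $v_b$ by the directions in which they traverse the crossed diagonals, to identify the admissible direction vectors with the prefixes of $c$ restricted to $\{\tau_i,\ldots,\tau_j\}$, and only then to realise each such path from the antigreedy path by a sequence of single flips. First I would record, using \Cref{lem:diagonalspositiveroots}, that $\gamma$ crosses $\tau_i,\ldots,\tau_j$ consecutively, and that—again because $T_c$ has no inner triangle—each triangle traversed between two consecutive crossings $\tau_\ell,\tau_{\ell+1}$ has its third side on the boundary of the $(n+3)$-gon. Since by definition a $T_c$-path is determined by the directions $\epsilon_\ell\in\{+,-\}$ in which it travels the diagonals $\tau_\ell$, I would encode a candidate path by its sign vector $(\epsilon_i,\ldots,\epsilon_j)$ and note that, by the rule recalled before \Cref{thm:schiffler}, its monomial is $\prod_{\epsilon_\ell=+}y_\ell$. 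In particular the positive set $P=\{\ell:\epsilon_\ell=+\}$ both determines and is determined by the monomial, so distinct admissible sign vectors yield distinct $T_c$-paths with distinct monomials; this already supplies the uniqueness asserted in the ``moreover'' part, once surjectivity is shown.

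The heart of the argument is a purely local analysis of each traversed triangle. Let $w$ be the vertex shared by $\tau_\ell$ and $\tau_{\ell+1}$ and let $b$ be the opposite boundary edge. Because the path enters the triangle through $\tau_\ell$ and leaves through $\tau_{\ell+1}$, the only admissible odd step joining the two even steps $\tau_\ell$ and $\tau_{\ell+1}$ is either to remain at the apex $w$ or to follow the boundary edge $b$ between its two non-apex endpoints; the connections that would reuse $\tau_\ell$ or $\tau_{\ell+1}$ are excluded by the definition of a $T$-path. Translating ``arriving or leaving at $w$'' into the sign $\epsilon$ via the counterclockwise orientation of the triangle—which is exactly the orientation recorded by the clockwise/counterclockwise placement in the construction of $T_c$, that is, by whether $\tau_{\ell+1}<\tau_\ell$ or $\tau_{\ell+1}>\tau_\ell$ in $c$—I would check that exactly one of the four combinations $(\epsilon_\ell,\epsilon_{\ell+1})$ is forbidden, and that the forbidden one is precisely the pattern violating the order-ideal condition for the edge $\{\ell,\ell+1\}$ oriented as in $c$. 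This finite case check, reconciling the counterclockwise convention with the $<$/$>$ convention on consecutive letters, is the step I expect to be the main obstacle.

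From the local analysis it then follows that the admissible sign vectors are exactly those whose positive set $P$ is an order ideal of the poset obtained by orienting the sub-path $\{\tau_i,\ldots,\tau_j\}$ of the Dynkin diagram according to $c$. It remains to match these order ideals with prefixes: by the standard description of the prefixes of a Coxeter element in terms of the acyclic orientation of its Dynkin diagram (here a sub-path, with the orientation induced by $c$ as in the definition of restricted prefixes), the order ideals of this poset are precisely the letter-sets of the prefixes of $c$ restricted to $\{\tau_i,\ldots,\tau_j\}$. Combined with \Cref{cor:TpathAn}, this gives the sought bijection between $T_c$-paths from $v_a$ to $v_b$ and such prefixes, and surjectivity of this bijection is exactly the implication ``admissible $\Rightarrow$ order ideal'' established above.

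Finally I would produce the flips. Ordering the admissible sign vectors by inclusion of their positive sets realises the distributive lattice of order ideals, with the antigreedy path ($P=\emptyset$) at the bottom and the greedy path ($P=\{i,\ldots,j\}$) at the top. A single flip of $\tau_\ell$ changes exactly the entry $\epsilon_\ell$ and hence alters $P$ by one element, so the local analysis shows that the admissible flips are precisely the covering relations of this lattice: one may flip $\tau_\ell$ from $-$ to $+$ exactly when $\ell$ is a source of the poset complementary to $P$. Consequently every prefix $\tau_{i_1}\cdots\tau_{i_m}$ is reached from the antigreedy path along a saturated chain whose successive flips read off a reduced word for the prefix, at each step flipping a diagonal that is a source of the remaining sub-poset, so that every intermediate sign vector is again an order ideal and hence an admissible $T_c$-path. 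This exhibits each $T_c$-path as obtained from the antigreedy path by the flip sequence prescribed by its prefix, and, together with the bijection of the previous paragraph, proves that every $T_c$-path arises for a unique prefix.
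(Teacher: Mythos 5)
Your proposal is correct and follows essentially the same route as the paper's proof: the paper likewise encodes a $T_c$-path by the directions in which it traverses $\tau_i,\ldots,\tau_j$, establishes exactly the pairwise restrictions you describe (forbidding one of the four sign patterns on each consecutive pair $\tau_\ell,\tau_{\ell+1}$, according to whether $\tau_{\ell+1}$ precedes or follows $\tau_\ell$ in~$c$), and then concludes that admissible direction vectors correspond exactly to prefixes. Your order-ideal and distributive-lattice packaging is a more explicit rendering of the paper's terse ``these are the only restrictions \dots\ it then directly follows'', and your justification that all intermediate paths in a flip sequence are again $T_c$-paths is in fact more careful than what the paper writes.

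One point worth flagging: the flip order you derive (sources first, i.e.\ $\tau_{i_1},\ldots,\tau_{i_m}$, reading the prefix word left to right) is the \emph{reverse} of the order $\tau_{i_m},\ldots,\tau_{i_1}$ literally stated in the proposition and repeated in the paper's proof. Checking against the paper's own data for $c=\tau_1\tau_3\tau_2$ in type $A_3$ and the diagonal crossing $\tau_1,\tau_2$ (where $\Fpoly{e_1-e_3}{\y}=y_1y_2+y_1+1$), the direction vector in which only $\tau_2$ is traveled positively is not a $T_c$-path, so $\tau_2$ cannot be flipped first in the antigreedy path for the prefix $\tau_1\tau_2$; only the source-first order works. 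So your argument establishes the statement with the corrected flip order; the discrepancy lies in the proposition's phrasing (which the paper's proof does not resolve), not in your reasoning.
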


\begin{proof}
  We explicitly describe the four possible restrictions for directions in which $T_c$-paths can travel.
  To this end, consider the situation that~$t_{i-1}$ and~$t_{i}$ are oriented towards their shared vertex in~$T_c$, or, equivalently, that $\tau_{i} < \tau_{i-1}$ (see the path for $e_2-e_4$ in \Cref{fig:132Tpaths}).
  Then, any~$T_c$-path $\zeta$ from~$v_a$ to~$v_b$
  \begin{itemize}
    \item that travels the diagonal~$\zeta_{2i-2} = t_{i-1}$ in \emph{positive} direction must also travel $\zeta_{2i} = t_{i}$ in \emph{positive} direction, and
    \item that travels the diagonal~$\zeta_{2i} = t_i$ in \emph{negative} direction must also travel $\zeta_{2i-2} = t_{i-1}$ in \emph{negative} direction.
  \end{itemize}
  The situation where~$t_{i-1}$ and~$t_{i}$ are oriented away from their shared vertex in~$T_c$, or, equivalently, that $\tau_i > \tau_{i-1}$ is the same with the roles of positive and negative direction interchanged (see the path for $e_1-e_3$ in \Cref{fig:132Tpaths}).

  Clearly, these are the only restrictions on $T_c$-paths.
  This means that a given sequence of orientations of the $t_i$'s corresponds to a $T_c$-path if and only if these restictions are satisfied.
  It then directly follows that every $T_c$-path is uniquely obtained from the antigreedy $T_c$-path~$\zetaag$ (which travels all the diagonals in negative direction) by flipping diagonals labelled $\tau_{i_m},\ldots,\tau_{i_1}$ in this order for a prefix $\tau_{i_1} \cdots \tau_{i_m}$ of~$c$ restricted to $\{\tau_i,\ldots,\tau_{j}\}$, as desired.
  Observe here that if one considers two different words for the same prefix, then both sequences of flips yield the same $T_c$-path, as expected.
\end{proof}

We refer to \Cref{fig:132Tpaths} for several examples, and also to \Cref{ex:A3two} below for two concrete computations.

\begin{corollary}\label{cor:TpathAn2}
  In the situation of \Cref{cor:TpathAn}, we have that
  \[
    \Fpoly{\beta}{\y} = \sum y_{i_1}\cdots y_{i_m}
  \]
  where the sum ranges over all prefixes $\tau_{i_1} \cdots \tau_{i_m}$ of~$c$ restricted to $\{\tau_i,\ldots,\tau_{j}\}$.
\end{corollary}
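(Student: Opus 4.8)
The plan is to combine the two preceding results. By \Cref{cor:TpathAn} we have $\Fpoly{\beta}{\y} = \sum_\zeta m[\zeta]$, where the sum ranges over all $T_c$-paths $\zeta$ from $v_a$ to $v_b$. By \Cref{prop:greedytpath}, these $T_c$-paths are in bijection with the prefixes $\tau_{i_1}\cdots\tau_{i_m}$ of $c$ restricted to $\{\tau_i,\ldots,\tau_j\}$, the path $\zeta$ associated to such a prefix being obtained from the antigreedy path $\zetaag$ by flipping the diagonals labelled $\tau_{i_m},\ldots,\tau_{i_1}$ in this order. It therefore suffices to verify that under this bijection one has $m[\zeta] = y_{i_1}\cdots y_{i_m}$, since then the sum over $T_c$-paths turns verbatim into the asserted sum over prefixes.

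To prove this monomial identity, I would track $m[\zeta]$ along the defining sequence of flips. Recall that $m[\zeta]$ is the product of the variables $y_\ell$ for which the diagonal $t_k = \tau_\ell$ is traveled in positive direction; since $\zetaag$ travels every diagonal in negative direction, $m[\zetaag] = 1$. Each of the flips in the sequence changes a single diagonal from negative to positive direction, and, as recalled in \Cref{sec:fpolystpaths}, such a flip multiplies the monomial by exactly the variable $y_\ell$ of the flipped diagonal $\tau_\ell$. Flipping $\tau_{i_m},\ldots,\tau_{i_1}$ in turn thus multiplies the monomial of $\zetaag$ by $y_{i_m}\cdots y_{i_1}$, and since the $y$-variables commute this equals $y_{i_1}\cdots y_{i_m}$. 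Hence $m[\zeta] = y_{i_1}\cdots y_{i_m}$, as needed.

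There is essentially no obstacle beyond this bookkeeping: the hard combinatorial work---constructing the bijection between $T_c$-paths and prefixes and controlling which diagonals get flipped---is already carried out in \Cref{prop:greedytpath}, so the only remaining step is the compatibility of $m[\cdot]$ with a single flip, which is immediate from its definition. The one point worth noting is the well-definedness of the right-hand side: a prefix element may admit several reduced words, but \Cref{prop:greedytpath} guarantees that they all produce the same $T_c$-path, and the monomial depends only on the \emph{set} of positively-traveled diagonals, i.e.\ on the support $\{i_1,\ldots,i_m\}$ of the prefix, so no ambiguity or double counting arises.
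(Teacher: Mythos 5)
Your proposal is correct and follows essentially the same route as the paper's own proof: combine \Cref{cor:TpathAn} with the bijection of \Cref{prop:greedytpath}, then observe that flipping $\tau_{i_m},\ldots,\tau_{i_1}$ starting from $\zetaag$ (whose monomial is $1$) makes exactly those diagonals positively traveled, so $m[\zeta] = y_{i_1}\cdots y_{i_m}$. Your additional remarks on tracking the monomial flip by flip and on well-definedness across different reduced words of a prefix are fine elaborations of what the paper leaves implicit.
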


\begin{proof}
  \Cref{prop:greedytpath} implies that $\Fpoly{\beta}{\y} = \sum m[\zeta_{i_1,\ldots,i_m}]$ where the sum ranges over all prefixes of~$c$ restricted to $\{\tau_i,\ldots,\tau_j\}$.
  Moreover, the resulting $T$-path $\zeta_{i_1,\ldots,i_m}$ that is obtained from the antigreedy $T$-path $\zetaag$ by flipping the diagonals $\tau_{i_m},\ldots,\tau_{i_1}$ in this order.
  The definition of $m[\zeta]$ thus implies that $m[\zeta_{i_1,\ldots,i_m}] = y_{i_1}\cdots y_{i_m}$, as desired.
\end{proof}

\begin{example}
\label{ex:A3two}
  Following the two examples of the two positive roots $e_1-e_4 = \alpha_1+\alpha_2+\alpha_3$ and $e_1-e_3 = \alpha_1+\alpha_2$ in type~$A_3$ with $c=\tau_1\tau_3\tau_2$ above, we obtain that the prefixes of~$c$ are $-,\tau_1,\tau_3,\tau_1\tau_3,\tau_1\tau_3\tau_2$ yielding 
  \[
    \Fpoly{e_1-e_4}{\y} = 1 + y_1 + y_3 + y_1y_3 + y_1y_2y_3,
  \]
  and that the prefixes of~$c$ restricted to $\{\tau_1,\tau_2\}$ are $-,\tau_1,\tau_1\tau_2$ yielding
  \[
    \Fpoly{e_1-e_3}{\y} = 1 + y_1 + y_1y_2.
  \]
  Both cases can be checked in \Cref{fig:132Tpaths}.
\end{example}

\subsection{$F$-polynomials from subword complexes}

We now use \Cref{cor:TpathAn2} to obtain the $F$-polynomials from the weight vectors of $\clusterComplex$ by providing the analogous property of the weight vectors in \Cref{thm:propweights}.
We start with the following explicit description of all different weights $\Weight{I}{k}$ that occur for the various facets for a fixed position~$k$.
Given a positive root $e_i - e_{j+1}$ with $1 \leq i \leq j \leq n$.
We have seen in \Cref{lem:lemma33} that there is a unique~$k \in \{n+1,\ldots,n+N\}$ such that $\Root{\greedy}{k} = e_i - e_{j+1}$, and we have then seen in \Cref{lem:rootlemma} that
\begin{align}
  \Weight{\greedy}{k} - \Weight{\antigreedy}{k} = \Root{\greedy}{k} = e_i - e_{j+1}. \label{eq:weighttypeA}
\end{align}

This yields the following lemma.

\begin{lemma}\label{lem:greedyantigreedyweight}
  We have
  \begin{align}
    \Weight{\greedy}{k}     &= (\epsilon_1,\ldots,\epsilon_{i-1},\ 1,\ \epsilon_{i+1},\ldots,\epsilon_{j},\ 0,\ \epsilon_{j+2},\ldots,\epsilon_{n+1}) \label{eq:greedyAn}\\
    \Weight{\antigreedy}{k} &= (\epsilon_1,\ldots,\epsilon_{i-1},\ 0,\ \epsilon_{i+1},\ldots,\epsilon_{j},\ 1,\ \epsilon_{j+2},\ldots,\epsilon_{n+1}) \label{eq:antigreedyAn}
  \end{align}
  for fixed $\epsilon_i \in \{0,1\}$ with $i \in \{1,\ldots,n+1\}\setminus\{i,j+1\}$.
\end{lemma}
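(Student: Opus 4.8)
The plan is to deduce both weight vectors from the single difference relation \eqref{eq:weighttypeA} together with the observation that, in type $A_n$, \emph{every} value of the weight function is a $0/1$-vector. First I would record this structural fact. By definition $\Weight{I}{k}$ lies in $W(\nabla)$, so it equals $w(\omega_s)$ for some $w \in W = \Sym_{n+1}$ and some fundamental weight $\omega_s = e_1 + \cdots + e_s$. Since $W$ acts on $\RR^{n+1}$ by permuting coordinates and the chosen representative $\omega_s$ is the $0/1$-vector with exactly $s$ ones, every $w(\omega_s)$ is again a $0/1$-vector with exactly $s$ ones. Writing $q_k = \tau_s$, this applies in particular to both $\Weight{\greedy}{k}$ and $\Weight{\antigreedy}{k}$: each is a $0/1$-vector, and since both are images of the \emph{same} fundamental weight $\omega_{q_k}$, both have exactly $s$ ones and hence equal coordinate sums.

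Next I would upgrade the difference relation from the quotient $V = \RR^{n+1}/\RR(1,\ldots,1)$ to an honest equality in $\RR^{n+1}$. Equation \eqref{eq:weighttypeA}, which is \Cref{lem:rootlemma} specialized to type $A_n$, gives $\Weight{\greedy}{k} - \Weight{\antigreedy}{k} = e_i - e_{j+1}$ in $V$. Thus the two $0/1$ representatives satisfy $\Weight{\greedy}{k} - \Weight{\antigreedy}{k} = e_i - e_{j+1} + t\,(1,\ldots,1)$ in $\RR^{n+1}$ for some $t \in \RR$. Taking coordinate sums and using that both representatives have the same sum $s$ while $e_i - e_{j+1}$ has sum $0$ forces $t(n+1) = 0$, hence $t = 0$. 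Therefore the equality holds on the nose in $\RR^{n+1}$.

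Finally I would compare coordinates. In coordinate $i$ the difference is $+1$, and since both entries lie in $\{0,1\}$ this forces $\big(\Weight{\greedy}{k}\big)_i = 1$ and $\big(\Weight{\antigreedy}{k}\big)_i = 0$; symmetrically, coordinate $j+1$ has difference $-1$, forcing $\big(\Weight{\greedy}{k}\big)_{j+1} = 0$ and $\big(\Weight{\antigreedy}{k}\big)_{j+1} = 1$. In every remaining coordinate $\ell \in \{1,\ldots,n+1\}\setminus\{i,j+1\}$ the difference vanishes, so the two weights share a common value $\epsilon_\ell \in \{0,1\}$ there. This is exactly \eqref{eq:greedyAn} and \eqref{eq:antigreedyAn}.

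The argument is essentially immediate once \Cref{lem:rootlemma} is available; the only step requiring genuine care is the passage from the quotient $V$ to $\RR^{n+1}$, which is precisely why I would first isolate the fact that $\Weight{\greedy}{k}$ and $\Weight{\antigreedy}{k}$ are permutations of the \emph{same} fundamental weight and hence share a coordinate sum. Without that observation the representative ambiguity by multiples of $(1,\ldots,1)$ would obstruct the coordinatewise comparison.
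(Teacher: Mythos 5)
Your proof is correct and follows essentially the same route as the paper, which deduces the lemma in one line from Equation~\eqref{eq:weighttypeA} and the fact that all type~$A_n$ weights are $(0,1)$-vectors. Your additional care in fixing the $0/1$ representatives and ruling out the ambiguity by multiples of $(1,\ldots,1)$ via equal coordinate sums simply makes explicit a detail the paper's terse proof leaves implicit.
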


\begin{proof}
  This follows from~\eqref{eq:weighttypeA} as all weights in type~$A_n$ are $(0,1)$-vectors.
\end{proof}

Using this observation, one explicitly obtains these weights for a given Coxeter element~$c$ as described next.

\begin{proposition}\label{prop:weightsAn}
  In the situation of \Cref{lem:greedyantigreedyweight}, we have the following properties of the $\epsilon_i$'s (where the cases in~\eqref{eq:innerepsilon} are only considered either if $i>1$, or if $j<n$, respectively):
  \begin{enumerate}[(i)]
    \item $\epsilon_1 = \ldots = \epsilon_{i-1}$ and $\epsilon_{j+2} = \ldots = \epsilon_{n+1}$.
    \item\label{eq:innerepsilon} $\epsilon_{i-1} = \begin{cases}
                         0 &\text{ if } \tau_{i-1} < \tau_{i  } \\
                         1 &\text{ if } \tau_{i-1} > \tau_{i  }
                       \end{cases}$\quad and $\epsilon_{j+2} = \begin{cases}
                         0 &\text{ if }\tau_{j+1} < \tau_{j+2} \\
                         1 &\text{ if }\tau_{j+1} > \tau_{j+2}
                       \end{cases}$.
    \item for $i<\ell\leq j$, $\epsilon_\ell = \begin{cases}
                                        1 \text{ if }\tau_{\ell-1} < \tau_\ell \\
                                        0 \text{ if }\tau_{\ell-1} > \tau_\ell
                                      \end{cases}$.

  \end{enumerate}
\end{proposition}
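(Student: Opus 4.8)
The plan is to compute the two weights directly from the definition and to read off the coordinates $\epsilon_\ell$ one at a time. By \Cref{lem:greedyantigreedyweight} the vectors $\Weight{\greedy}{k}$ and $\Weight{\antigreedy}{k}$ agree in every coordinate $\ell\notin\{i,j+1\}$, so it suffices to determine a single one of them; I would work with $\Weight{\greedy}{k}$. Writing $\q_k=\tau_m$, the definition of the weight function gives $\Weight{\greedy}{k}=\pi(\omega_m)$, where $\pi=\q_{n+1}\cdots\q_{k-1}$ is the prefix of $\cwo{c}$ preceding position~$k$, regarded as an element of $W=\Sym_{n+1}$. Since in type~$A_n$ one has $\omega_m=e_1+\cdots+e_m$ and $\pi$ merely permutes coordinates, $\Weight{\greedy}{k}=\sum_{a=1}^m e_{\pi(a)}$ is the $(0,1)$-indicator vector of the set $\pi(\{1,\ldots,m\})$; equivalently $\epsilon_\ell=1$ exactly when $\pi^{-1}(\ell)\le m$. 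Finally the defining property $\Root{\greedy}{k}=\pi(\alpha_m)=e_i-e_{j+1}$ of the index~$k$ forces $\pi(m)=i$ and $\pi(m+1)=j+1$, which immediately recovers the two distinguished entries $\epsilon_i=1$ and $\epsilon_{j+1}=0$ of \Cref{lem:greedyantigreedyweight}.

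The second step is to translate the condition $\pi^{-1}(\ell)\le m$ into a statement about the $c$-reflection order, that is, the order $\Root{\greedy}{n+1},\ldots,\Root{\greedy}{n+N}$ in which the positive roots are enumerated along $\cwo{c}$; this is a bijection onto $\Phiplus$ by \Cref{lem:lemma33}\eqref{it:bijectionpositiveroots}. As $\cwo{c}$ is reduced, the left inversion set of the prefix $\pi$ is precisely $\{\Root{\greedy}{\ell}:n+1\le\ell<k\}$, namely the roots preceding $\beta=e_i-e_{j+1}$. Combining this with $\pi^{-1}(i)=m$, one gets for $\ell<i$ that $\epsilon_\ell=1$ iff $e_\ell-e_i$ does not precede $\beta$, and for $\ell>i$ with $\ell\ne j+1$ that $\epsilon_\ell=1$ iff $e_i-e_\ell$ precedes $\beta$. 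The whole proposition is thereby reduced to locating $\beta$ in the $c$-reflection order among the positive roots that share the endpoint $i$ with it.

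I would settle this last point from the canonical factorization of the $c$-sorting word of~$\wo$ into its decreasing runs $\c_{K_1}\c_{K_2}\cdots$ with $K_1\supseteq K_2\supseteq\cdots$ (alternatively by a short induction on the rank $n$, using \Cref{lem:commutation} to delete the last generator of~$c$). Tracking how each value migrates under the successive simple transpositions, one sees that a coordinate $\ell$ lying outside the interval $[i,j+1]$ crosses the threshold $m$ only together with its entire side, which yields the constancy in~(i); and that the common value of each outer block is dictated by the orientation in~$c$ of the edge of the Dynkin diagram at the corresponding endpoint of the interval $\{\tau_i,\ldots,\tau_j\}$, which yields~(ii). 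For an interior coordinate $i<\ell\le j$ the same bookkeeping shows that $e_i-e_\ell$ precedes $\beta$ precisely when $\tau_{\ell-1}$ occurs before $\tau_\ell$ in~$c$, which is~(iii).

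The main obstacle is exactly this last analysis of the $c$-reflection order on the roots sharing the endpoint~$i$: the two distinguished entries and the reduction to an inversion condition are formal, but proving that the interior entries toggle according to the local comparisons $\tau_{\ell-1}$ versus $\tau_\ell$, while the outer blocks remain constant and depend only on the single boundary comparison at each endpoint, requires the careful orientation-by-orientation case analysis already isolated in the proof of \Cref{prop:greedytpath}. I expect it to mirror, on the side of weight vectors, the prefix description of $T_c$-paths obtained there, which is precisely why it will feed into \Cref{thm:propweights}.
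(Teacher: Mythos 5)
Your first two steps are correct and cleanly done: writing $\Weight{\greedy}{k}=\pi(\omega_m)$ for the prefix $\pi=\q_{n+1}\cdots\q_{k-1}$ of $\cwo{c}$ preceding position~$k$, reading off that $\epsilon_\ell=1$ exactly when $\pi^{-1}(\ell)\le m$, and translating this via the inversion set of $\pi$ into the question of which roots $e_\ell-e_i$ (for $\ell<i$) and $e_i-e_\ell$ (for $\ell>i$) precede $\beta=e_i-e_{j+1}$ in the order $\Root{\greedy}{n+1},\ldots,\Root{\greedy}{n+N}$. But this reduction is a reformulation, not a proof: at that point the only entries you have actually determined are $\epsilon_i=1$ and $\epsilon_{j+1}=0$, which are already contained in \Cref{lem:greedyantigreedyweight}. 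The entire content of the proposition now sits in your third step --- that for $i<\ell\le j$ the root $e_i-e_\ell$ precedes $\beta$ exactly when $\tau_{\ell-1}<\tau_\ell$, that for $\ell\ge j+2$ it precedes $\beta$ exactly when $\tau_{j+1}>\tau_{j+2}$ independently of $\ell$, and the mirror statement $e_\ell-e_i$ precedes $\beta$ iff $\tau_{i-1}<\tau_i$ for all $\ell\le i-1$ --- and there you only assert that ``tracking how each value migrates \ldots\ one sees that'' these hold; you yourself flag this as the main obstacle. Moreover, the appeal to \Cref{prop:greedytpath} does not close the gap: that proof is a case analysis of directions of travel of $T_c$-paths in the triangulation $T_c$ and yields no statement about the reflection order along $\cwo{c}$; importing it would require precisely the dictionary between prefixes of~$c$ and weight vectors that this proposition (via \Cref{thm:propweights}) is meant to supply. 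So there is a genuine gap, and it is exactly where all the combinatorial work lies.

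For comparison, the paper's proof never analyzes $\cwo{c}$ statically. It takes the chain of $n$ increasing flips from $\greedy$ to $\antigreedy$ obtained by flipping the initial copy of $\c$ from right to left; by \Cref{lem:lemma33}\eqref{it:rootupdate} and~\eqref{it:weightflip}, each such flip acts on the weight at position~$k$ by an adjacent transposition $\tau_\ell$, and these transpositions are applied in the reverse of the order in which they appear in~$\c$, each pair of adjacent coordinates being swapped at most once. The decisive input is \Cref{lem:positiveweightshift}: along increasing flips a swap can only move a~$1$ to the right, never to the left. Combined with the boundary values from \Cref{lem:greedyantigreedyweight} at the two ends of the chain, this monotonicity forces all the values $\epsilon_\ell$ by elementary bookkeeping --- which is exactly the analysis your inversion-set approach still owes. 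If you want to keep your route, you must prove the ordering statements above directly (e.g.\ by induction on the rank~$n$, or via the factorization of $\cwo{c}$ into decreasing subwords); until that is written out, the proposal is incomplete.
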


\begin{proof}
  One can flip the letters in the initial copy of $\c$ inside $\cw{c}$ from right to left to obtain a sequence of increasing flips from~$\greedy$ to~$\antigreedy$ (which is actually the unique shortest chain of increasing flips from the greedy to the antigreedy facet).
  As the root configuration of~$\greedy$ is given by all simple roots $\{e_1-e_2,\ldots,e_n-e_{n+1}\}$, \Cref{lem:lemma33}\eqref{it:rootupdate} implies that along the above sequence of flips from~$\greedy$ to~$\antigreedy$, every pair~$\epsilon_\ell$ and $\epsilon_{\ell+1}$ of consecutive indices of $\Weight{\greedy}{k}$ is updated exactly once (where we set $\epsilon_i = 1$ and $\epsilon_{j+1} = 0$ as in~\eqref{eq:greedyAn}.
  Moreover, \Cref{lem:positiveweightshift} implies that along this procedure, either $\epsilon_\ell = \epsilon_{\ell+1}$ and the application of~$\tau_\ell$ does not change the weight, or $\epsilon_\ell = 1$ and $\epsilon_{\ell+1} = 0$, and this application moves the $1$ in position~$\ell$ into position~$\ell+1$.
  As such a move is not reversible again by \Cref{lem:positiveweightshift}, we directly obtain the second property.
  The first property is obtained with the additional observation that those entries coincide in~$\greedy$ and in~$\antigreedy$.
  The last property finally follows with the observation that every $\tau_\ell$ for $i \leq \ell \leq j$ must indeed move a~$1$ one position to the right to obtain~$\antigreedy$ from~$\greedy$ this way.
\end{proof}

\begin{example}
  We again consider $c=\tau_1\tau_3\tau_2$ in type $A_3$.
  \Cref{weightsc132A3} shows all weights $\Weight{I}{k}$ in this case, the weights for $\greedy$ and $\antigreedy$ can be computed as described in \Cref{lem:greedyantigreedyweight} and \Cref{prop:weightsAn}.
  \begin{figure}[t]
    \resizebox{\columnwidth}{!}{
      \begin{tabular}{|cc||c|c|c|c|c|c|}
        \hline
        && $\alpha_1$ & $\alpha_3$ & $\scriptstyle{\alpha_1+\alpha_2+\alpha_3}$ & $\alpha_2+\alpha_3$ & $\alpha_1+\alpha_2$ & $\alpha_2$ \\
        & $I$ & $4$ & $5$ & $6$ & $7$ & $8$ & $9$ \\
        \hline
        $\greedy=$\hspace*{-5pt}
        & \{1, 2, 3\} & $(1, 0, 0, 0)$ & $(1, 1, 1, 0)$ & $(1, 1, 0, 0)$ & $(0, 1, 0, 0)$ & $(1, 1, 0, 1)$ & $(0, 1, 0, 1)$ \\
        & \{1, 2, 9\} & $(1, 0, 0, 0)$ & $(1, 1, 1, 0)$ & $(1, 0, 1, 0)$ & $(0, 0, 1, 0)$ & $(1, 0, 1, 1)$ & $(0, 0, 1, 1)$ \\
        & \{1, 3, 5\} & $(1, 0, 0, 0)$ & $(1, 1, 0, 1)$ & $(1, 1, 0, 0)$ & $(0, 1, 0, 0)$ & $(1, 1, 0, 1)$ & $(0, 1, 0, 1)$ \\
        & \{1, 5, 7\} & $(1, 0, 0, 0)$ & $(1, 1, 0, 1)$ & $(1, 0, 0, 1)$ & $(0, 0, 0, 1)$ & $(1, 1, 0, 1)$ & $(0, 1, 0, 1)$ \\
        & \{1, 7, 9\} & $(1, 0, 0, 0)$ & $(1, 1, 0, 1)$ & $(1, 0, 0, 1)$ & $(0, 0, 0, 1)$ & $(1, 0, 1, 1)$ & $(0, 0, 1, 1)$ \\
        & \{2, 3, 4\} & $(0, 1, 0, 0)$ & $(1, 1, 1, 0)$ & $(1, 1, 0, 0)$ & $(0, 1, 0, 0)$ & $(1, 1, 0, 1)$ & $(0, 1, 0, 1)$ \\
        & \{2, 4, 8\} & $(0, 1, 0, 0)$ & $(1, 1, 1, 0)$ & $(0, 1, 1, 0)$ & $(0, 1, 0, 0)$ & $(0, 1, 1, 1)$ & $(0, 1, 0, 1)$ \\
        & \{2, 8, 9\} & $(0, 1, 0, 0)$ & $(1, 1, 1, 0)$ & $(0, 1, 1, 0)$ & $(0, 0, 1, 0)$ & $(0, 1, 1, 1)$ & $(0, 0, 1, 1)$ \\
        & \{3, 4, 5\} & $(0, 1, 0, 0)$ & $(1, 1, 0, 1)$ & $(1, 1, 0, 0)$ & $(0, 1, 0, 0)$ & $(1, 1, 0, 1)$ & $(0, 1, 0, 1)$ \\
        & \{4, 5, 6\} & $(0, 1, 0, 0)$ & $(1, 1, 0, 1)$ & $(0, 1, 0, 1)$ & $(0, 1, 0, 0)$ & $(1, 1, 0, 1)$ & $(0, 1, 0, 1)$ \\
        & \{4, 6, 8\} & $(0, 1, 0, 0)$ & $(1, 1, 0, 1)$ & $(0, 1, 0, 1)$ & $(0, 1, 0, 0)$ & $(0, 1, 1, 1)$ & $(0, 1, 0, 1)$ \\
        & \{5, 6, 7\} & $(0, 1, 0, 0)$ & $(1, 1, 0, 1)$ & $(0, 1, 0, 1)$ & $(0, 0, 0, 1)$ & $(1, 1, 0, 1)$ & $(0, 1, 0, 1)$ \\
        & \{6, 7, 8\} & $(0, 1, 0, 0)$ & $(1, 1, 0, 1)$ & $(0, 1, 0, 1)$ & $(0, 0, 0, 1)$ & $(0, 1, 1, 1)$ & $(0, 1, 0, 1)$ \\
        $\antigreedy=$\hspace*{-5pt} 
        & \{7, 8, 9\} & $(0, 1, 0, 0)$ & $(1, 1, 0, 1)$ & $(0, 1, 0, 1)$ & $(0, 0, 0, 1)$ & $(0, 1, 1, 1)$ & $(0, 0, 1, 1)$ \\
        \hline
      \end{tabular}
    }
    \caption{\label{weightsc132A3}The weights of the facets appearing in the unique shortest chain of increasing flips from the greedy to the antigreedy facet for $c=\tau_1\tau_3\tau_2$ in type $A_3$.}
  \end{figure}
\end{example}

To state the main observation towards the proof of \Cref{thm:main1}, define the following set of weights as an ``interval'' in the weights,
\[
  \IntervalWeight{k} = \set{\omega \in W(\nabla)}{\Weight{\greedy}{k} - \omega, \omega - \Weight{\antigreedy}{k} \in L^+}.
\]
Similarly, we define the following positive elements as an ``interval'' in the root space,
\begin{align*}
  \IntervalRoot{k}  &= \set{\gamma \in L^+}{\Weight{\greedy}{k}-\Weight{\antigreedy}{k}-\gamma \in L^+} \\
                    &= \set{\gamma \in L^+}{\hspace*{26.5pt}\Root{\greedy}{k}\hspace*{26.5pt}-\gamma \in L^+},
\end{align*}
and observe that
\[
  \IntervalRoot{k} = \bigset{ \omega - \Weight{\antigreedy}{k}}{ \omega \in \IntervalWeight{k}}.
\]
Using this notion, we deduce the following theorem from \Cref{prop:weightsAn}.

\begin{theorem}\label{thm:propweights}
  For any $k \in \{1,\ldots,n+N\}$, we have that $\IntervalWeight{k}$ equals the set of weights $\Weight{I}{k}$ for which the facet~$I$ can be obtained from the antigreedy facet $\antigreedy$ by flipping the letters $i_m,\ldots,i_1$ in this order for prefixes $\tau_{i_1} \cdots \tau_{i_m}$ of~$c$.
\end{theorem}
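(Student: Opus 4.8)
The plan is to prove the two inclusions separately, the inclusion of the facet weights into $\IntervalWeight{k}$ being the easy half and in fact holding for \emph{all} facets. Fix $k$, and when $k>n$ let $e_i-e_{j+1}=\Root{\greedy}{k}$ be the associated positive root (the case $k\le n$ is degenerate: there $\Weight{\greedy}{k}=\Weight{\antigreedy}{k}$ by the remark following \Cref{lem:weightequality}, so $\IntervalWeight{k}=\{\Weight{\antigreedy}{k}\}$ is realized by the empty prefix, and the statement is immediate). For the inclusion ``$\supseteq$'', recall that $\greedy$ is the source and $\antigreedy$ the sink of the orientation of the flip graph by increasing flips, so every facet $I$ is reachable from $\greedy$ by increasing flips and reaches $\antigreedy$ by increasing flips. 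By \Cref{lem:positiveweightshift} each increasing flip subtracts an element of $L^+$ from $\Weight{\cdot}{k}$, whence $\Weight{\greedy}{k}-\Weight{I}{k}\in L^+$ and $\Weight{I}{k}-\Weight{\antigreedy}{k}\in L^+$ for every facet $I$, that is, $\Weight{I}{k}\in\IntervalWeight{k}$ always. In particular this holds for the prefix-flip facets.

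For the reverse inclusion (with $k>n$) I would make $\IntervalWeight{k}$ completely explicit via the standard partial-sum criterion for the simple-root cone: a vector with zero coordinate sum lies in $L^+$ if and only if all of its partial sums $v_1+\cdots+v_m$ are nonnegative. By \Cref{lem:greedyantigreedyweight}, $\Weight{\greedy}{k}$ and $\Weight{\antigreedy}{k}$ are $(0,1)$-vectors agreeing everywhere except at position $i$ (greedy $1$, antigreedy $0$) and position $j+1$ (greedy $0$, antigreedy $1$); hence their partial sums agree off $\{i,\dots,j\}$ and differ by exactly $1$ on $\{i,\dots,j\}$. Thus $\omega\in\IntervalWeight{k}$ exactly when $\omega$ is a $(0,1)$-vector whose partial sums match those of $\Weight{\antigreedy}{k}$ off $\{i,\dots,j\}$ and lie in the length-one window on $\{i,\dots,j\}$. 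Writing $D_\ell$ for the difference of the $\ell$-th partial sums of $\omega$ and of $\Weight{\antigreedy}{k}$, the window condition reads $D_\ell\in\{0,1\}$ for $i\le\ell\le j$ (with $D_{i-1}=D_{j+1}=0$), while the requirement that $\omega$ itself be a $(0,1)$-vector translates coordinatewise into $D_\ell\ge D_{\ell-1}$ whenever $\epsilon_\ell=0$ and $D_\ell\le D_{\ell-1}$ whenever $\epsilon_\ell=1$, for $i<\ell\le j$. Moreover $\omega-\Weight{\antigreedy}{k}=\sum_{\ell=i}^{j}D_\ell\,\alpha_\ell$, so $\omega$ is determined by $S=\{\ell:D_\ell=1\}$.

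Next I would feed in \Cref{prop:weightsAn}(iii), which identifies $\epsilon_\ell=1$ with $\tau_{\ell-1}$ preceding $\tau_\ell$ in $c$ and $\epsilon_\ell=0$ with the reverse order. After substitution, the monotonicity constraints on $D$ become exactly the defining conditions of the order ideals of the heap of $c$ restricted to $\{\tau_i,\dots,\tau_j\}$: when $\tau_{\ell-1}$ precedes $\tau_\ell$ one needs $\ell\in S\Rightarrow\ell-1\in S$, and in the opposite orientation $\ell-1\in S\Rightarrow\ell\in S$. These order ideals are precisely the letter sets of the prefixes $\tau_{i_1}\cdots\tau_{i_m}$ of $c$ restricted to $\{\tau_i,\dots,\tau_j\}$, and under this correspondence $\omega-\Weight{\antigreedy}{k}=\sum_{\ell\in S}\alpha_\ell$. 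This matches the indexing of monomials in \Cref{cor:TpathAn2} and shows that $\IntervalWeight{k}$ is exactly the set $\{\Weight{\antigreedy}{k}+\sum_{\ell\in S}\alpha_\ell:S\text{ a restricted prefix}\}$.

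It remains to realize each such $\omega_S$ as the weight $\Weight{I}{k}$ of the facet obtained from $\antigreedy$ by the prescribed prefix flips, and this is where the real work lies. I would argue by induction on $|S|$: appending a minimal addable letter $\tau_{\ell_0}$ to a restricted prefix corresponds to an available flip from the current facet toward $\greedy$, and --- by the same single-step analysis of the weight function used in the proof of \Cref{prop:weightsAn}, where each application of $\tau_{\ell_0}$ either fixes the weight or moves a single $1$ by one coordinate --- this flip increases $\Weight{\cdot}{k}$ by exactly the simple root $\alpha_{\ell_0}$. Flips of letters outside $\{\tau_i,\dots,\tau_j\}$ leave $\Weight{\cdot}{k}$ unchanged, so prefixes of $c$ and prefixes of $c$ restricted to $\{\tau_i,\dots,\tau_j\}$ yield the same weights, as in the statement. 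Summing the increments gives $\Weight{I}{k}=\Weight{\antigreedy}{k}+\sum_{\ell\in S}\alpha_\ell=\omega_S$, which completes ``$\subseteq$''. The main obstacle is exactly this last identification: pinning down that each admissible flip contributes precisely one simple root $\alpha_{\ell_0}$, rather than the larger positive multiple of a positive root permitted a priori by \Cref{lem:positiveweightshift}, and that the flips are available in an order compatible with the prefix. This requires combining the flip rule of \Cref{lem:lemma33} with the $(0,1)$-structure of type-$A_n$ weights; by contrast, the partial-sum bookkeeping of the previous two paragraphs is routine.
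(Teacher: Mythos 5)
Your proposal is correct and takes essentially the same route as the paper's proof: the paper likewise characterizes $\IntervalWeight{k}$ via the $(0,1)$-structure of \Cref{lem:greedyantigreedyweight} and \Cref{prop:weightsAn} (phrased there as interchanging consecutive entries with $(\epsilon_\ell,\epsilon_{\ell+1})=(0,1)$, which is exactly your partial-sum/heap--order-ideal bookkeeping), and then identifies the resulting vectors with those reached by prefix flips from~$\antigreedy$. The realization step you single out as the main obstacle --- that each admissible flip changes $\Weight{\cdot}{k}$ by exactly one simple root and is available in prefix-compatible order --- is precisely the point the paper disposes of with ``one directly observes'', implicitly relying on the same single-step weight analysis from the proof of \Cref{prop:weightsAn} that you invoke, so your write-up is, if anything, more explicit than the paper's own.
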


\begin{proof}
  Move from the weight given in~\eqref{eq:antigreedyAn} to the weight given in~\eqref{eq:greedyAn} inside the interval $\IntervalWeight{k}$ means that one can interchange any two consecutive positions $\epsilon_\ell,\epsilon_{\ell+1}$ whenever $\ell \in \{i,\ldots j\}$ and $(\epsilon_\ell,\epsilon_{\ell+1}) = (0,1)$.
  One directly observes that flipping letters $i_m,\ldots,i_1$ in this order in $\antigreedy$ for prefixes $\tau_{i_1} \cdots \tau_{i_m}$ of~$c$ yields the same set of vectors.
\end{proof}

Given this theorem, we obtain indeed all weights of facets of $\clusterComplex$.

\begin{corollary}\label{cor:intervalequality}
  We have for $k \in \{1,\ldots,n+N\}$ that
  \[
    \set{\Weight{I}{k}}{I \text{ facet of }\clusterComplex} = \IntervalWeight{k}.
  \]
\end{corollary}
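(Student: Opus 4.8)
The plan is to establish the two inclusions separately; one of them is essentially already recorded in \Cref{thm:propweights}. For the inclusion $\IntervalWeight{k} \subseteq \set{\Weight{I}{k}}{I \text{ facet of }\clusterComplex}$ I would simply note that the facets produced in \Cref{thm:propweights}---those obtained from the antigreedy facet $\antigreedy$ by flipping the letters of a prefix of~$c$---are in particular facets of $\clusterComplex$, and that theorem already asserts that their weights at position~$k$ sweep out all of $\IntervalWeight{k}$. So this direction requires no new argument.

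The content lies in the reverse inclusion: I must show that \emph{every} facet~$I$, and not only the prefix-flip ones, satisfies $\Weight{I}{k}\in\IntervalWeight{k}$, that is,
\[
  \Weight{\greedy}{k}-\Weight{I}{k}\in L^+ \qquad\text{and}\qquad \Weight{I}{k}-\Weight{\antigreedy}{k}\in L^+.
\]
My plan here is a monotonicity argument along increasing flips, powered entirely by \Cref{lem:positiveweightshift}, which says that an increasing flip can only decrease the weight at position~$k$ in the $L^+$-order. The first relation is immediate from the fact (already used in the proof of \Cref{prop:exchangeRelationslemma}) that every facet~$I$ is reached from~$\greedy$ by a sequence of increasing flips: telescoping \Cref{lem:positiveweightshift} along such a chain, and using that $L^+$ is closed under addition, gives $\Weight{\greedy}{k}-\Weight{I}{k}\in L^+$.

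For the second relation I need, dually, a chain of increasing flips from~$I$ to the antigreedy facet~$\antigreedy$. I would obtain this from the structure of the flip graph of $\clusterComplex$: the increasing flips orient it acyclically (this is the standard increasing-flip order on the subword complex), and the excerpt records that $\greedy$ and $\antigreedy$ are the unique facets all of whose flips are increasing, respectively decreasing, i.e. the unique source and unique sink of this orientation. In a finite connected acyclic orientation with a unique sink, every vertex admits a directed path to that sink, and such a path is precisely an increasing-flip chain $I\to\antigreedy$. Telescoping \Cref{lem:positiveweightshift} once more yields $\Weight{I}{k}-\Weight{\antigreedy}{k}\in L^+$, and since $\Weight{I}{k}\in W(\nabla)$ by definition, the two relations together give $\Weight{I}{k}\in\IntervalWeight{k}$.

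The one place that genuinely requires care---and hence the main obstacle---is the existence of this downward increasing-flip chain from~$I$ to~$\antigreedy$ used in the second relation. The upward chain from~$\greedy$ to~$I$ is quoted verbatim from the earlier discussion, whereas its mirror image is not literally the same statement and must instead be justified through the unique-sink property of the acyclic increasing-flip orientation. Everything else reduces to a routine telescoping of \Cref{lem:positiveweightshift}, with the closure of $L^+$ under addition doing all the work.
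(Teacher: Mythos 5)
Your proof is correct and takes essentially the same route as the paper: one inclusion is exactly \Cref{thm:propweights}, and the other follows by telescoping \Cref{lem:positiveweightshift} along increasing-flip chains running from $\greedy$ through the given facet to $\antigreedy$. The paper simply asserts that every facet lies on such a chain, whereas you justify the downward half ($I$ to $\antigreedy$) via the unique-sink property of the finite acyclic increasing-flip orientation---a valid filling-in of the detail the paper leaves implicit.
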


\begin{proof}
  The inclusion
  \[
    \set{\Weight{I}{k}}{I \text{ facet of }\clusterComplex} \subseteq \IntervalWeight{k}
  \]
  is a direct consequence of \Cref{lem:positiveweightshift} as every facet lies on a sequence of increasing flips from the greedy to the antigreedy facet.
  The other direction is \Cref{thm:propweights}.
\end{proof}

\begin{proof}[Proof of \Cref{thm:main1}]
  Let $\Alg(W,c)$ be the cluster algebra of type~$A_n$ for a given Coxeter element~$c$, and let $\beta = e_i - e_{j+1}$ be a positive root.
  By \Cref{cor:TpathAn2}, we have that
  \[
    \Fpoly{\beta}{\y} = \sum y_{i_1}\cdots y_{i_m}
  \]
  where the sum ranges over all prefixes $\tau_{i_1} \cdots \tau_{i_m}$ of~$c$ restricted to $\{\tau_i,\ldots,\tau_{j}\}$.

  For the unique index~$k$ such that $\Root{\greedy}{k} = \beta$, the description of the interval $\IntervalWeight{k}$ in \Cref{thm:propweights} yields that the set of all $\Weight{I}{k} - \Weight{\antigreedy}{k}$ for which~$I$ is obtained from the antigreedy facet $\antigreedy$ by flipping the letters $i_m,\ldots,i_1$ in this order for prefixes $\tau_{i_1} \cdots \tau_{i_m}$ of~$c$ exactly matches the exponent vectors of the monomials $y_{i_1}\cdots y_{i_m}$ in the above sum.
  Together with \Cref{cor:intervalequality}, this implies \Cref{conj:newtonpolytope} for $\Alg(W,c)$.

  \Cref{conj:latticepoints} is then a direct consequence as all exponent vectors of monomials in $\Fpoly{\beta}{\y}$ are $(0,1)$-vectors.
\end{proof}

The given description of the $F$-polynomials in type~$A_n$ has the consequence that there is a generalization of Loday's realization of the classical associahedron mentioned in \Cref{rem:postnikov1} to all $c$-associahedra of type~$A_n$.

\begin{corollary}
  The type~$A_n$ $c$-associahedron is given by
  \[
    \sum \conv\{ e_{i_1} + \ldots + e_{i_m} \}
  \]
  where the sum ranges over all pairs $1 \leq i \leq j \leq n$ and each convex hull is over all prefixes $i_m \cdots i_1$ of the Coxeter element~$c$ restricted to $\{\tau_i,\ldots,\tau_{j}\}$.
\end{corollary}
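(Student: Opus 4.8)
The plan is to combine the Minkowski decomposition of the brick polytope with the explicit computation of the $F$-polynomials in type~$A_n$. First I would invoke \Cref{thm:main1}, which establishes that \Cref{conj:newtonpolytope} holds for $\Alg(W,c)$ in type~$A_n$; this unlocks \Cref{thm:newton}, giving that the $c$-associahedron $\brickPolytope\big(\cw{c}\big)$ coincides, up to translation, with the Minkowski sum
\[
  \sum_{\beta \in \Phiplus} \Newton{\Fpoly{\beta}{\y}}.
\]
It therefore suffices to identify each summand $\Newton{\Fpoly{\beta}{\y}}$ explicitly and to reindex the sum.

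Next I would use the parametrization of positive roots in type~$A_n$, namely $\Phiplus = \set{e_i - e_{j+1}}{1 \le i \le j \le n}$, so that the sum over $\beta \in \Phiplus$ is precisely the sum over all pairs $1 \le i \le j \le n$ appearing in the statement. For a fixed such $\beta = e_i - e_{j+1}$, \Cref{cor:TpathAn2} expresses the $F$-polynomial as
\[
  \Fpoly{\beta}{\y} = \sum y_{i_1}\cdots y_{i_m},
\]
the sum ranging over all prefixes $\tau_{i_1}\cdots\tau_{i_m}$ of~$c$ restricted to $\{\tau_i,\ldots,\tau_j\}$. Since every monomial of $\Fpoly{\beta}{\y}$ arises exactly once this way and (as noted in the proof of \Cref{thm:main1}) is squarefree, the Newton polytope is the convex hull of the corresponding exponent vectors. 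Reading each such exponent vector in the root basis $\Delta$ gives the point $\alpha_{i_1}+\ldots+\alpha_{i_m}$, which under the standard identification $\alpha_\ell \leftrightarrow e_\ell$ is the lattice point $e_{i_1}+\ldots+e_{i_m}$. Hence
\[
  \Newton{\Fpoly{\beta}{\y}} = \conv\set{e_{i_1}+\ldots+e_{i_m}}{\tau_{i_1}\cdots\tau_{i_m}\text{ a prefix of }c\text{ restricted to }\{\tau_i,\ldots,\tau_j\}}.
\]
Substituting this into the Minkowski sum from \Cref{thm:newton} and reindexing over pairs $1 \le i \le j \le n$ yields exactly the claimed expression.

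The only point requiring care — and the closest thing to an obstacle — is the bookkeeping of the ambient coordinates. The Newton polytope in \Cref{conj:newtonpolytope} is recorded in the root basis, whereas the statement is phrased in the standard basis $e_\ell$, so one must check that the linear identification $\alpha_\ell \leftrightarrow e_\ell$ is compatible with forming Minkowski sums and does not disturb the combinatorial type. This is automatic, since a linear isomorphism $A$ commutes with Minkowski sums, $A\big(\sum_\beta P_\beta\big) = \sum_\beta A(P_\beta)$, and preserves face lattices; the translation already permitted by \Cref{thm:newton} absorbs the remaining freedom. For the linear Coxeter element this recovers Loday's realization as in \Cref{rem:postnikov1}, which serves as a useful consistency check. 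Once the identification is pinned down, the equality holds on the nose up to that translation, and the corollary follows.
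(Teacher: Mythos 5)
Your proposal is correct and takes essentially the same route as the paper, which derives this corollary directly from \Cref{thm:main1} (activating \Cref{thm:newton}) together with the prefix description of the $F$-polynomials in \Cref{cor:TpathAn2}, so that each Newton polytope becomes the convex hull of the prefix indicator vectors and the Minkowski sum reindexes over pairs $1 \leq i \leq j \leq n$. Your additional care with the identification $\alpha_\ell \leftrightarrow e_\ell$ between the root-basis Newton polytopes and the standard-basis statement is exactly the (implicit) convention of the paper, as confirmed by its type~$A_3$ example written in $(0,1)$-coordinates.
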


We remark that C.~Lange obtained in~\cite{Lan2013} a different Minkowski decomposition of the $c$-associahedron into sums and differences of simplicies.
We refer to~\cite[Theorem~4.3]{Lan2013} and also to~\cite[Section~4]{LP2013} for details on that decomposition.

\begin{example}
  We have seen that the prefixes of $c = \tau_1\tau_3\tau_2$ are given by
  \[
    -,\tau_1,\tau_3,\tau_1\tau_3,\tau_1\tau_3\tau_2,
  \]
  so that we obtain that the associahedron is given by
  \begin{align*}
     & \conv\{000,100\} + \conv\{000,100,110\} + \conv\{000,100,001,101,111\} \\
    +& \conv\{000,010\} + \conv\{000,001,011\} + \conv\{000,001\}
  \end{align*}
  where the summands correspond to the intervals $\{\tau_i,\ldots,\tau_j\}$ for $1 \leq i \leq j \leq 3$ in the order $\tau_1, \tau_1\tau_2, \tau_1\tau_2\tau_3, \tau_2, \tau_2\tau_3, \tau_3$.
  For example, the second summand is given by the convex hull of $\{000,100,110\}$.
  These are the different indicator vectors of prefixes $-,\tau_1,\tau_1\tau_2$ of~$c$ where the letter~$\tau_3$ is deleted.
\end{example}

\bibliographystyle{alpha}
\bibliography{BrodskyStump}

\end{document}